\def\namedlabel#1#2{\begingroup
	#2%
	\def\@currentlabel{#2}%
	\phantomsection\label{#1}\endgroup
}
\newtheoremstyle{standard}
 {16pt}  
 {16pt}  
 {}  
 {}  
 {\bfseries}
 {}  
 { } 
 {{\thmname{#1~}}{\thmnumber{#2.}}\thmnote{~(#3)}} 
\newtheoremstyle{kursiv}
 {16pt}  
 {16pt}  
 {\itshape}  
 {}  
 {\bfseries}
 {}  
 { } 
 {{\thmname{#1~}}{\thmnumber{#2.}}\thmnote{~(#3)}} 
\theoremstyle{standard}
\newtheorem{defn} [subsection]{Definition}
\newtheorem{ex} [subsection]{Example}
\newtheorem{rem}   [subsection]{Remark}
\newtheorem{nota}   [subsection]{Notation}
\newtheorem{setup} [subsection]{}
\theoremstyle{definition}
\theoremstyle{kursiv}
\newtheorem{thm}[subsection]{Theorem}
\newtheorem{prop} [subsection]{Proposition}
\newtheorem{lem} [subsection]{Lemma}
\newcommand{\Evol}{\mathrm{Evol}}
\newcommand{\evol}{\mathrm{evol}}
\newcommand{\di}{\mathrm{d}}
\newcommand{\N}{\mathbb{N}}
\newcommand{\R}{\mathbb{R}}
\newcommand{\K}{\mathbb{K}}
\newcommand{\C}{\mathbb{C}}
\newcommand{\g}{\mathfrak{g}}
\renewcommand{\epsilon}{\varepsilon}
\newcommand{\set}[1]{\{  #1 \}}
\newcommand{\setm}[2]{\left\{\, #1 \middle\vert #2\,\right\}}
\newcommand{\norm}[1]{\left\lVert #1 \right\rVert}
\newcommand{\Bnorm}[1]{\norm{#1}_B}
\newcommand{\abs}[1]{\left| #1 \right|}
\newcommand{\opnorm}[1]{\norm{#1}_\text{op}}
\newcommand{\ve}{\varepsilon}
\newcommand{\coloneq}{\colonequals}
\DeclareMathOperator{\res}{res}
\DeclareMathOperator{\Hom}{Hom}
\DeclareMathOperator{\colours}{\mathcal{C}}
\DeclareMathOperator{\elcopro}{\overline{\Delta}^\ve}
\DeclareSymbolFont{bbold}{U}{bbold}{m}{n}
\DeclareSymbolFontAlphabet{\mathbbold}{bbold}
\newcommand{\Cut}[1]{\Lambda}
\newcommand{\cH}{\ensuremath{\mathcal{H}}}
\newcommand{\cP}{\ensuremath{\mathcal{P}}}
\newcommand{\cT}{\ensuremath{\mathcal{T}}}
\newcommand{\Lf}{\ensuremath{\mathbf{L}}}
\newcommand{\func}[5]{#1 \colon #2 \rightarrow #3 ,\quad #4 \mapsto #5}
\newcommand{\smfunc}[3]{#1 \colon #2 \rightarrow #3}
\newcommand{\nnfunc}[4]{#1 \rightarrow #2 ,\quad #3 \mapsto #4}
\newcommand{\smset}[1]{ \left\{ #1 \right\} }
\newcommand{\oBallin}[3]{U_{#1}^{#2}\left(#3 \right)}
\newcommand{\LB}[1][\cdot \hspace{1pt} , \cdot]{[\hspace{1pt} #1 \hspace{1pt} ]}
\DeclareMathOperator{\one}{\mathbf{1}}
\newcommand{\Frechet}{Fr\'echet }
\newcommand{\Hopf}  {\cH}					
\newcommand{\lcA}{A}						
\newcommand{\lcB}{B}						
\newcommand{\InfChar}[2]{ \g(#1 , #2) }				
\newcommand{\Char}[2]{     G(#1 , #2) }			
 \newcommand{\controlledChar}[2]{G_\mathrm{ctr}(#1,#2)}
 \newcommand{\controlledInfChar}[2]{\g_\mathrm{ctr}(#1,#2)}
\newcommand{\catname}[1]{\ensuremath{\normalfont{\mathbf{#1}}}}
\newcommand{\CoHopf}{\catname{CombHopf}}
\providecommand*{\shuffle}{%
  \mathbin{\mathpalette\shuffle@{}}%
}
\newcommand*{\shuffle@}[2]{%
  \sbox0{$#1\vcenter{}$}%
  \kern .15\ht0 
  \rlap{\vrule height .25\ht0 depth 0pt width 2.5\ht0}%
  \raise.1\ht0\hbox to 2.5\ht0{%
    \vrule height 1.75\ht0 depth -.1\ht0 width .17\ht0 %
    \hfill
    \vrule height 1.75\ht0 depth -.1\ht0 width .17\ht0 %
    \hfill
    \vrule height 1.75\ht0 depth -.1\ht0 width .17\ht0 %
  }%
  \kern .15\ht0 
}
\newcommand{\RT}{\ensuremath{\cT}}
\DeclareMathOperator{\OST}{OST}
\newcommand{\anf}[1]{``#1''}
\renewcommand{\phi}{\varphi}
 \newcommand{\Gr}[1]{\omega_{#1}}
 \renewcommand{\projlim}[1]{\lim\limits_{\substack{\longleftarrow \\#1 }}}
 \newcommand{\Ind}{J}
 \newcommand{\IndwAbs}{(\Ind,\abs{\cdot})}
 \newcommand{\varInd}{I}
 \newcommand{\varIndwAbs}{(\varInd,\abs{\cdot})}
 \newcommand{\ellOneNorm}[2]{\norm{#1}_{\ell^1_{#2}}}
 \newcommand{\ellOne}[2]{\ell^1_{#1}(#2)}
 \newcommand{\ellOneLimit}[1]{\ellOne{\leftarrow}{#1}}
 \newcommand{\ellInfty}[3]{\ell^\infty_{#1}(#2,#3)}
 \newcommand{\ellInftyLimit}[2]{\ellInfty{\rightarrow}{#1}{#2}}
 \newcommand{\FreeMonoid}[1]{#1^*}
 \newcommand{\FreeCommMonoid}[1]{#1^\dagger}
 \newcommand{\emptyword}{1_M}
\tikzstyle dtree=[grow'=up,sibling distance=2mm,level distance=2mm,thick]
\tikzstyle dtree node=[scale=0.3,shape=circle,very thin,draw]
\tikzstyle dtree black node=[style=dtree node,fill=black]
\tikzstyle dtree white node=[style=dtree node,fill=white]
\title{Lie groups of controlled characters of combinatorial Hopf algebras} 
\author{R.\ Dahmen\footnote{Rafael Dahmen, Technische Universit\"{a}t Darmstadt, Germany }\ \ 
        and A.\ Schmeding\footnote{Alexander Schmeding, TU Berlin, Germany}} 
\begin{document}

\maketitle

\begin{abstract}
In this article groups of \anf{controlled} characters of a combinatorial Hopf algebra are considered from the perspective of infinite-dimensional Lie theory.
A character is controlled in our sense if it satisfies certain growth bounds, e.g.\ exponential growth. We study these characters for combinatorial Hopf algebras. Following Loday and Ronco, a combinatorial Hopf algebra is a graded and connected Hopf algebra which is a polynomial algebra with an explicit choice of basis (usually identified with combinatorial objects such as trees, graphs, etc.).
If the growth bounds and the Hopf algebra are compatible we prove that the controlled characters form infinite-dimensional Lie groups.
Further, we identify the Lie algebra and establish regularity results (in the sense of Milnor) for these Lie groups. The general construction principle exhibited here enables to treat a broad class of examples from physics, numerical analysis and control theory.

Groups of controlled characters appear in renormalisation of quantum field theories, numerical analysis and control theory in the guise of groups of locally convergent power series. 
The results presented here generalise the construction of the (tame) Butcher group, also known as the controlled character group of the Butcher-Connes-Kreimer Hopf algebra.
\end{abstract}

\medskip

\textbf{Keywords:} real analytic, infinite-dimensional Lie group, (combinatorial) Hopf algebra, Silva space, weighted sequence space, inductive limit of Banach spaces, regularity of Lie groups, Fa\`{a} di Bruno Hopf algebra, Butcher-Connes-Kreimer Hopf algebra, (tame) Butcher group

\medskip

 \textbf{MSC2010:} 22E65 (primary); 
 16T05, 
 16T30, 
 43A40, 
 46N40,  
 46B45  
  (Secondary)

\tableofcontents

\section*{Introduction and statement of results} \addcontentsline{toc}{section}{Introduction and statement of results}
 Hopf algebras and their character groups appear in a variety of mathematical and physical contexts, such as numerical analysis \cite{MR2790315,MR2407032}, renormalisation of quantum field theories \cite{MR2523455,MR2371808}, the theory of rough paths \cite{MR3300969,1701.01152v1} and control theory \cite{MR3278760,Oberst}.
 In the contexts mentioned above, these Hopf algebras encode combinatorial information specific to the application. 
 On the structural level, the combinatorial nature of the Hopf algebras allows one to identify them with polynomial algebras
 built from combinatorial objects like trees, graphs, words or permutations. 
 Thus one is naturally led to the notion of a \emph{combinatorial Hopf algebra}. Following Loday and Ronco, a Hopf algebra is \emph{combinatorial} if it is a graded and connected Hopf algebra which is isomorphic as an algebra to a polynomial algebra.\footnote{Note that the isomorphism to the polynomial algebra is part of the structure of a combinatorial Hopf algebra. The notion of combinatorial Hopf algebra used in the present paper is more specialised than the notion of combinatorial Hopf algebras as defined in \cite{MR2732058}. In contrast to ibid.\ combinatorial Hopf algebras in our sense are always graded and connected and we restrict to Hopf algebras which are free algebras over a generating set (in ibid. also the cofree case is considered).}
 In this picture, the variables which generate the polynomial algebra correspond to the combinatorial objects.
 However, in applications one is usually not only interested in the Hopf algebra, but in its character group, i.e.\ the group of algebra homomorphisms from the Hopf algebra into a commutative target algebra.
 Elements in the character group can for example be identified with formal power series. These correspond in numerical analysis to formal numerical solutions of differential equation \cite{BS16} and to Chen-Fliess series in control theory \cite{DuffautEspinosa2016609}. Further, in the renormalisation of quantum field theories the correspondence yields (formal) power series acting on the space of coupling constants \cite[Chapter 1, 6.5]{MR2371808}.
  
 A Lie group structure for the character groups of a large class of Hopf algebras was constructed in \cite{BDS16}. 
 However, it turns out that the topology and differentiable structure of the character group is not fine enough to treat for example (local) convergence and well-posedness problems which arise in applications.
 As discussed in \cite{BS14,BDSOverview} this problem can not be solved in general, since the group of characters does not admit a finer structure which still turns it into a Lie group.
 
 One way to remedy this problem is to pass to a suitable subgroup of characters, which admits a finer structure.
 The groups envisaged here correspond to groups of power series which converge at least locally. 
 Groups arising in this way have been investigated in numerical analysis \cite{BS16} and in the context of control theory, where in \cite{MR3278760} a group of locally convergent Chen-Fliess series is constructed.
 For example, the group treated in numerical analysis, the so called tame Butcher group, corresponds to a group of formal power series, the B-series. 
 Local convergence of these power series yields locally convergent approximative (numerical) solutions to ordinary differential equations.
 Using the Lie group structure, it was observed in \cite{BS16} that methods from numerical analysis (e.g.\ identifying characters with B-series) correspond to Lie group morphisms between infinite-dimensional groups. Consequently, these mechanisms fit into an abstract framework where tools from Lie theory are available for the analysis.
 \smallskip
 
 Motivated by these results, we present a construction principle for Lie groups of ``controlled characters''. 
 Here a character is called controlled (a notion made precise in Section \ref{section_controlled_monoid} below), if it satisfies certain growth bounds, e.g.\ exponential growth bounds (tame Butcher group case).
 The general consensus in the literature seems to be that these groups should admit a suitable Lie group structure. 
 However, to obtain groups of controlled characters, the growth bounds and the Hopf algebra structure must be compatible.
 In essence this means that the structural maps of the Hopf algebra should satisfy certain estimates with respect to the growth bounds.
 This leads to the notion of a control pair (see Definition \ref{defn_control_pair} below) and we prove that every control pair gives rise to a Lie group of controlled characters, for which we investigate the Lie theoretic properties.
The novelty of these results is the flexibility of the construction which applies to a broad class of Hopf algebras and growth bounds (whereas the methods from the tame Butcher case were limited in scope). 
 
 Finally, we mention that in the present paper we mainly focus on the Lie theory for these groups and illustrate the constructions only with some selected examples.
 The reason for this is that examples usually require certain specific (and involved) combinatorial estimates.\footnote{We refer the reader to Section \ref{sect: examples} for a more in depth discussion of these problems.}
 It should come at no surprise that the general theory for controlled character groups is technically much more demanding then the general Lie theory for character groups as developed in \cite{BDS16}.
 The additional difficulty corresponds exactly to the passage from formal power series to (locally) convergent series. 
 However, the results obtained are stronger, as the topology and differentiable sturcture of controlled characters are suitable to treat applications where convergence is essential. 
 \smallskip
 
 We will now describe the results of this paper in more details. 
 Fix a graded and connected Hopf algebra $(\Hopf, m, 1_\Hopf, \Delta, \epsilon, S)$ and a commutative Banach algebra $B$ over $\K \in \{\R, \C\}$.
 We recall that he character group $\Char{\Hopf}{\lcB}$ of $\Hopf$ is defined as
  \begin{displaymath}
   \Char{\Hopf}{\lcB} \coloneq \left\{\phi \in \Hom_\K (\Hopf, \lcB) \middle| \substack{\phi (ab) = \phi (a)\phi (b), \forall a,b \in \Hopf\\ \text{ and } \phi (1_\Hopf) = 1_\lcB}\right\},
  \end{displaymath}
 together with the convolution product $\phi \star \psi \coloneq m_B \circ (\phi \otimes \psi ) \circ \Delta$.
 Our aim is to study certain subgroups of ``controlled characters'' for a combinatorial Hopf algebra  $(\Hopf, \Sigma)$.\footnote{As explained above, in our sense a \emph{combinatorial Hopf algebra} is a graded and connected Hopf algebra $\Hopf$ together with an isomorphism to an algebra of (non) commutative polynomials generated by $\Sigma \subseteq \Hopf$.}
 To give meaning to the term ``controlled'', we fix a family $(\Gr{k})_{k\in \N}$ of functions $\Gr{k} \colon \N_0 \rightarrow [0,\infty[$ which defines growth bounds for the characters.
 With respect to such a family, we then define the subset of all controlled characters as
  \begin{displaymath}
  \controlledChar{\Hopf}{B} = \setm{\phi \in \Char{\Hopf}{B} }{ \,\exists k \in \N \text{ with }
  \norm{\phi (\tau)}_B \leq \Gr{k} (\abs{\tau}) \text{ for all }\tau \in \Sigma}   .
  \end{displaymath}
 For example, the growth family $(n \mapsto 2^{kn})_{k \in \N}$ (used in \cite{BS16} for the tame Butcher group) gives rise to the subset of exponentially bounded characters.
 In general, the set $\controlledChar{\Hopf}{B}$ will not form  a subgroup of $(\Char{\Hopf}{\lcB}, \star)$ (cf.\ Example \ref{ex: counterex}). 
 However, if the growth family satisfies suitable conditions and is compatible with the combinatorial Hopf algebra, we prove that $\controlledChar{\Hopf}{B}$ is indeed a subgroup of the character group.
 Here compatibility roughly means that the coproduct and the antipode of the combinatorial Hopf algebra satisfy $\ell^1$-type estimates with respect to growth bounds.
 This leads to the notion of a control pair consisting of a combinatorial Hopf algebra $(\Hopf , \Sigma)$ and a compatible growth family (made precise in Section \ref{section_controlled_Hopf}). 
 For a control pair 
 $\left( 	(\Hopf , \Sigma), (\Gr{k})_{k\in \N}      \right)$
 and a Banach algebra $B$, we prove that the controlled characters form a group with a  natural (in general infinite-dimensional) Lie group structure:\medskip
 
 \textbf{Theorem A}\emph{
  Let $(\Hopf, \Sigma)$ be a combinatorial Hopf algebra, $B$ a commutative Banach algebra and $(\Gr{k})_{k\in \N}$ be a growth family such that $(\Hopf, (\Gr{k})_{k\in \N})$ forms a control pair. 
  The associated group of $B$-valued controlled characters $\controlledChar{\Hopf}{B}$ is an analytic Lie group modelled on an (LB)-space.}\medskip
 
To understand these results, we note that we use a concept of differentiable maps between locally convex spaces known as Bastiani's calculus \cite{MR0177277} or Keller's $C^r_c$-theory~\cite{keller1974} (see \cite{milnor1983,hg2002a,neeb2006} for streamlined expositions).
 For the reader's convenience Appendix \ref{App: lcvx:diff} contains a brief recollection of the basic notions used throughout the paper.
 At this point it is worthwhile to discuss a finer point of the topology and manifold structure of the controlled character group. 
 Recall from \cite{BDS16} that the character group $\Char{\Hopf}{B}$ of a graded and connected Hopf algebra is an infinite-dimensional Lie group. 
 Albeit the controlled characters form a subgroup of this Lie group, they are not a Lie subgroup in the sense that its manifold structure and the topology are \textbf{not} the induced ones.
 Instead, the topology on the group of controlled characters is properly finer than the one induced by all characters.\footnote{For additional information we refer to the discussion on the topology of the tame Butcher group in \cite{BS16} and in \cite{BDSOverview}. Note that the topologies there are studied with a view towards application in numerical analysis (compare \cite[Remark 2.3]{BS14}).} 
However, the inclusion $\controlledChar{\Hopf}{B} \rightarrow \Char{\Hopf}{B}$ turns out to be a smooth group homomorphism.

The Lie algebra of $\Char{\Hopf}{B}$ is the Lie algebra of infinitesimal characters
 \begin{displaymath}
  \InfChar{\Hopf}{\lcB} \coloneq \{ \phi \in \Hom_\K (\Hopf, \lcB) \mid \phi (ab) = \epsilon (a) \phi(b) + \epsilon(b)\phi(a)\}
 \end{displaymath}
 with the commutator Lie bracket $\LB[\phi ,\psi] \coloneq \phi \star \psi - \psi \star \phi$. 
 It turns out that the Lie algebra of the subgroup $\controlledChar{\Hopf}{B}$ consists precisely of those infinitesimal characters which satisfy an estimate similar to the definition of the controlled characters:
 \medskip
 
 \textbf{Lie algebra of the controlled characters}
 \emph{The Lie algebra of $\controlledChar{\Hopf}{B}$ is given by the controlled infinitesimal characters $(\controlledInfChar{\Hopf}{B},\LB )$, where the Lie bracket is the commutator bracket induced by the convolution}
 \begin{displaymath}
  \LB[\varphi , \psi] = \varphi \star \psi - \psi \star\varphi.
 \end{displaymath} 
 As in the Lie group case, this Lie algebra is a Lie subalgebra of the Lie algebra of all characters but in general with a finer topology. 
 
 Then we discuss regularity (in the sense of Milnor) for Lie groups of controlled characters.
 To understand these results first recall the notion of regularity for Lie groups.
  Let $G$ be a Lie group modelled on a locally convex space, with identity element $\one$, and
 $r\in \N_0\cup\{\infty\}$. We use the tangent map of the left translation
 $\lambda_g\colon G\to G$, $x\mapsto gx$ by $g\in G$ to define
 $g.v\coloneq T_{\one} \lambda_g(v) \in T_g G$ for $v\in T_{\one} (G) =: \Lf(G)$.
 Following \cite{HG15reg}, $G$ is called
 \emph{$C^r$-semiregular} if for each $C^r$-curve
 $\eta\colon [0,1]\rightarrow \Lf(G)$ the problem
 \begin{displaymath}
  \begin{cases}
   \gamma'(t)&= \gamma(t).\eta(t)\\ \gamma(0) &= \one
  \end{cases}
 \end{displaymath}
 has a (necessarily unique) $C^{r+1}$-solution
 $\Evol (\eta)\coloneq\gamma\colon [0,1]\rightarrow G$.
 Furthermore, if $G$ is $C^r$-semiregular and the map
 \begin{displaymath}
  \evol \colon C^r([0,1],\Lf(G))\rightarrow G,\quad \gamma\mapsto \Evol
  (\gamma)(1)
 \end{displaymath}
 is smooth, then $G$ is called \emph{$C^r$-regular}.
 If $G$ is $C^r$-regular and $r\leq s$, then $G$ is also
 $C^s$-regular. A $C^\infty$-regular Lie group $G$ is called \emph{regular}
 \emph{(in the sense of Milnor}) -- a property first defined in \cite{milnor1983}.
 Every finite-dimensional Lie group is $C^0$-regular (cf. \cite{neeb2006}). Several
 important results in infinite-dimensional Lie theory are only available for
 regular Lie groups (see
 \cite{milnor1983,neeb2006,HG15reg}, cf.\ also \cite{KM97}).
 \smallskip
 
 Recall that in \cite[Theorem C]{BDS16} the regularity of the character group $\Char{\Hopf}{B}$ is shown. 
 Together with well known techniques from \cite{HG15reg}, this result paves the way to establish semiregularity of the controlled character groups.
 Our strategy is to prove that the solutions of the differential equations on $\Char{\Hopf}{B}$ remain in the group of controlled characters if the initial data of the differential equation is contained in the controlled characters.
 Unfortunately, we were unable to obtain the necessary estimates to carry out this program in full generality. 
 Instead our methods yield (semi-)regularity only for a certain class of combinatorial Hopf algebras in the presence of additional estimates.
 Before we state the result, recall the notion of a \emph{right-handed Hopf algebra} from \cite{MP15}: 
 A combinatorial Hopf algebra $(\Hopf, \Sigma)$ is \emph{right-handed} if the reduced coproduct\footnote{Recall that in a graded and connected Hopf algebra the coproduct can be written as $\Delta (x) = 1 \otimes x + x\otimes 1 + \overline{\Delta} (x)$, where $\overline{\Delta}$ is a sum of tensor products $a\otimes b$ with $|a|, |b|\geq 1$.} satisfies
 \begin{displaymath}
  \overline{\Delta} (\tau) \subseteq \K^{(\Sigma)} \otimes \Hopf \quad \forall \tau \in \Sigma,
 \end{displaymath}
 where $\K^{(\Sigma)}$ is the vector space spanned by $\Sigma$. Thus in a right-handed Hopf algebra, the reduced coproduct contains only polynomials in $\Sigma$ on the right-hand side of the tensor products, whereas the left-hand side contains only (multiples of) elements of $\Sigma$.
 Many combinatorial Hopf algebras appearing in applications (e.g.\ the Butcher-Connes-Kreimer algebra, Fa\`{a} di Bruno algebras) are right-handed Hopf algebras.
 In addition to the right-handed condition we also need to assume that the terms of the reduced coproduct whose right-hand side is also given by a multiple of an element in $\Sigma$ grows only linearly in the degree.\footnote{Our methods can be adapted to slightly more general situations, cf.\ Remark \ref{rem: lineargrowth}.}
 To this end, we consider the elementary coproduct, i.e.\ the terms of the reduced coproduct contained in $\K^{(\Sigma)} \otimes \K^{(\Sigma)}$, 
 \begin{displaymath}
  \elcopro (\tau) := \sum_{\substack{|\alpha| + |\beta| = |\tau|\\ \alpha, \beta \in \Sigma}} c_{\alpha,\beta,\tau} \alpha \otimes \tau \quad \text{ for } \tau \in \Sigma.
 \end{displaymath}
 A right-handed Hopf algebra \emph{right-hand linearly bounded (RLB)} if there exist $a,b \in \R$ 
 \begin{displaymath}
  \text{such that }\norm{\elcopro (\tau)}_{\ell^1} = \sum_{\substack{|\alpha| + |\beta| = |\tau|\\ \alpha, \beta \in \Sigma}} |c_{\alpha,\beta,\tau}| \leq a|\tau|+b \quad \text{for all } \tau \in \Sigma.
 \end{displaymath}
 For example, the Butcher-Connes-Kreimer algebra of rooted trees is an (RLB) Hopf algebra.
 This property was used to establish the regularity of the tame Butcher group in \cite{BS16}.
 For groups of controlled characters of an (RLB)-Hopf algebra we prove.\medskip
 
 \textbf{Theorem B} \emph{Let $(\Hopf, \Sigma)$ be an (RLB) Hopf algebra, $B$ a Banach algebra. Assume that $((\Hopf, \Sigma), (\Gr{k})_{k\in \N})$ is a control pair.
 Then the associated group $\controlledChar{\Hopf}{B}$ is $C^0$-regular and thus in particular regular in the sense of Milnor.} \medskip
 
 The main difficulty here is to prove that the solutions of the differential equations stay bounded, whence the above estimates come into play.  In particular, Theorem B emphasises the value of ``right-handedness'' of the Hopf algebra\footnote{Right-handed Hopf algebras are closely connected to preLie algebras, cf.\ \cite{MR2732058}. In particular, it has been shown that they admit a Zimmerman type forrest formula, see \cite{MP15}.}.  
 Note that Theorem B is new even for the tame Butcher group, which was previously only known to be $C^1$-regular (see \cite{BS16}). 
 This seemingly minor improvement is quite important as \cite{Hanusch18} recently established Lie theoretic tools such as the strong Trotter formula for $C^0$-regular infinite-dimensional Lie groups. It is currently unknown whether similar results can be obtained for $C^1$-regular Lie groups. 
 We remark that the proof of Theorem B is much more conceptual than the proof for the regularity appearing in \cite{BS16}.

 Though Theorem B establishes regularity for controlled character groups for some combinatorial Hopf algebras, a general regularity result is still missing.  
 This is somewhat problematic as there are diverse combinatorial Hopf algebras which appear in applications and are not right-handed.
 For example, the shuffle algebra (cf.\ Example \ref{ex: shuffle}) is not right-handed so it is not a (RLB) Hopf algebra. 
 Also the Fa\`{a} di Bruno algebra, see Example \ref{ex_faadiBruno}, is not an (RLB) Hopf algebra (though it is right-handed)
 However, to the authors knowledge Theorem B is the only available result which asserts regularity of groups of controlled characters. 
 Stronger results could conceivably be established once new and refined methods become available.  
 It is a long standing conjecture in infinite-dimensional Lie theory, that every Lie group modelled on a complete space is regular.
 Hence we expect the groups to be regular and pose:\smallskip
 
 \textbf{Open Problem} \emph{Establish regularity for the group of controlled characters of an arbitrary combinatorial Hopf algebra. Alternatively, give an example of a combinatorial Hopf algebra with a a controlled character group which is not regular.}
  
 \paragraph{Acknowledgements} 
 This research was partially supported by the project \emph{Structure Preserving Integrators, Discrete Integrable Systems and Algebraic Combinatorics} (Norwegian Research Council project 231632) and the European Unions Horizon 2020 research and innovation
programme under the Marie Sk\l{}odowska-Curie grant agreement No.\ 691070.
 The authors thank A.\ Murua, J.M.\ Sanz-Serna, D.\ Manchon, K. Ebrahimi-Fard, W.S.\ Gray and S.\ Paycha for helpful discussions. Finally, we thank the anonymous referee for insightful comments which helped improve the article.


 \section{Foundations: Weighted function spaces}
 \begin{nota}
   We write $\N\coloneq \smset{1,2,3,\ldots}$, and $\N_0 \coloneq\N\cup\smset{0}$. 
   Throughout $\K \in \{\R,\C\}$. In a normed space $E$, we denote by $\oBallin{r}{E}{x}$ the $r-$ball around $x \in E$.
 \end{nota}
 
 Our goal is to study functions which grow in a controlled way, i.e.~ satisfy certain growth restrictions. 
 To make this precise we fix a family of functions which will measure how fast a function can grow while still be regarded as \anf{controlled}:
 \begin{defn}[Growth family]							\label{defn_growth_family}
  A family $\left(	\Gr{k}	\right)_{k\in\N}$ of functions $\smfunc{\Gr{k}}{\N_0}{\N}$ is called \emph{growth family} if it satisfies the following conditions. For all $k \in \N$ and $n,m\in \N_0$
  \begin{enumerate}
   \item[\namedlabel{axiom_H_monotonic_in_k}{(W1)}] $\Gr{k}(0)=1$  and $\Gr{k}(n)\leq \Gr{k+1}(n)$.
    \item[\namedlabel{axiom_H_Multi}{(W2)}] 
				      $
				      \Gr{k}(n)\cdot \Gr{k}(m) \leq  \Gr{k}(n+m). 
				      $
  \end{enumerate}
  \begin{enumerate}
   \item[\namedlabel{axiom_H_Infinity}{(W3)}] 
  						For all $k_1 \in \N$ there exists $k_2\geq k_1$ such that
				      $
				       \Gr{k_2}(n) \geq 2^n \cdot \Gr{k_1}(n)		.
				      $
  \end{enumerate}
  A growth family is called \emph{convex} if for a fixed $k_1$ we can choose $k_2\geq k_1$ such that in addition for each $k_3\geq k_2$ 
  \begin{enumerate}
   \item[\namedlabel{axiom_H_convexity}{(cW)}] $\exists \alpha \in ]0,1[ \text{ such that }\quad \Gr{k_1}(n)^\alpha\cdot \Gr{k_3}(n)^{1-\alpha}\leq\Gr{k_2}(n)	\quad \forall n\in\N_0$.
  \end{enumerate}
 \end{defn}
 
 The technical condition \ref{axiom_H_convexity} assures that the model spaces of the Lie groups of controlled characters are well behaved (see Lemma \ref{lem_creg_lim} for the exact statement).
 In general, condition \ref{axiom_H_convexity} is needed for our techniques. However, in an important class of examples (i.e.\ characters of Hopf algebras of finite type with values in finite-dimensional algebras, compare Proposition \ref{prop_properties_of_ellInftyLimit} (d)) it is not necessary to require \ref{axiom_H_convexity}.
  
 \begin{rem}
       Observe that \ref{axiom_H_Infinity} implies that a growth family will grow at least exponentially fast. 
       The growth bounds considered in Proposition \ref{prop: growth:fam} below thus grow at least with exponential speed.
       In general, it is desirable to use the most restrictive growth family (i.e.\ one with exponential growth) to ensure certain convergence properties in applications. 
        However, as we will see in Example \ref{ex_fdB2}, not all combinatorial Hopf algebras admit groups of controlled characters which are exponentially bounded.
 \end{rem}

 \begin{prop}[Standard examples] \label{prop: growth:fam}
  The following are convex growth families in the sense of Definition \ref{defn_growth_family} (Hopf algebra and Example refer to further information given in Section \ref{sect: examples}).\\
  \begin{minipage}{13cm}\renewcommand{\arraystretch}{1.5}
  \begin{center}\emph{ \begin{tabular}{|c|c|c|c|c|} \hline
    $\Gr{k}(n) $ & Application & Hopf algebra  & see &  Source \\ \hline  
	$k^n$     & B-series, & (coloured) Connes-Kreimer&\ref{ex: CKHopf}&\cite{HL1997}\\ 
	$2^{kn}$ &  P-series & algebra of rooted trees &  \ref{ex: colCKHopf} &\cite{BS16}\footnote{Note that the growth families $k^n$ and $2^{kn}$ both realise groups of exponentially bounded characters.}  \\\hline
	$k^n \cdot n!$  & Chen-Fliess series & Fa\`{a} di Bruno type & \ref{ex: fdB:Chen} &\cite{MR2849486}  \\\hline
	$k^{n^2} $ & formal power & Fa\`{a} di Bruno  & \ref{ex_faadiBruno}  & \\
	$k^n (n!)^k$ & series\footnote{The growth bounds will be used to illustrate certain technical details of the theory.} &  & \ref{ex_fdB2}  &  \\
	\hline
   \end{tabular}}
   \end{center} 
   \end{minipage}
 \end{prop}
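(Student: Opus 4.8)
The plan is to check the four conditions \ref{axiom_H_monotonic_in_k}--\ref{axiom_H_convexity} of Definition \ref{defn_growth_family} directly. Each $\Gr{k}$ in the table is a product of powers of $k$ and of factorials, hence $\N$-valued, so only the four inequalities remain to be verified. What lets us treat all five cases at once is the observation that every family has the form $\Gr{k}(n) = \exp\bigl(\phi_k(n)\bigr)$ with
\[
 \phi_k(n) = a_k\, n + b_k \log(n!) + c_k\, n^2 ,
\]
where $a_k, b_k, c_k \in [0,\infty[$ are nondecreasing in $k$: indeed $(a_k, b_k, c_k)$ equals $(\log k, 0, 0)$ for $\Gr{k}(n) = k^n$, $(k\log 2, 0, 0)$ for $2^{kn}$, $(\log k, 1, 0)$ for $k^n\, n!$, $(0, 0, \log k)$ for $k^{n^2}$, and $(\log k, k, 0)$ for $k^n (n!)^k$. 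Since $\log$ is monotone, each axiom turns into an (in)equality between the exponents $\phi_k$; and the three building blocks $n \mapsto n$, $n \mapsto \log(n!)$, $n \mapsto n^2$ all vanish at $n=0$ and are superadditive (the middle one because $\binom{n+m}{n} \ge 1$, the last because $(n+m)^2 = n^2 + m^2 + 2nm$).

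Granting this, \ref{axiom_H_monotonic_in_k} is immediate: $\phi_k(0) = 0$, and $\phi_k \le \phi_{k+1}$ because the weights are nondecreasing and the building blocks nonnegative. Condition \ref{axiom_H_Multi} follows by summing the superadditivity of the three building blocks against the nonnegative weights $a_k, b_k, c_k$, giving $\phi_k(n) + \phi_k(m) \le \phi_k(n+m)$. For \ref{axiom_H_Infinity} I would take $k_2 \coloneq 2k_1$ throughout; the required estimate $\phi_{2k_1}(n) \ge n\log 2 + \phi_{k_1}(n)$ reduces, case by case, to the identity $\log(2k_1) = \log 2 + \log k_1$ (the cases where $a_k = \log k$), to $2k_1 \log 2 \ge (k_1+1)\log 2$ (the case $2^{kn}$), and to $(\log(2k_1) - \log k_1)\, n^2 = n^2\log 2 \ge n\log 2$ (the case $k^{n^2}$, in which $a_k \equiv 0$ so the quadratic term must absorb the factor $2^n$).

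For the convexity condition \ref{axiom_H_convexity} I keep the same $k_2 = 2k_1$, as the definition requires, and for $k_3 \ge k_2$ I seek $\alpha \in\, ]0,1[$ with $\alpha\, \phi_{k_1}(n) + (1-\alpha)\, \phi_{k_3}(n) \le \phi_{k_2}(n)$ for all $n \in \N_0$. Since the building blocks are nonnegative, it suffices to secure the three scalar inequalities $\alpha w_{k_1} + (1-\alpha) w_{k_3} \le w_{k_2}$ obtained by comparing the corresponding weights $w \in \{a, b, c\}$. In each of these (finitely many) inequalities the relevant weight is either constant in $k$, in which case it holds for every $\alpha$, or strictly increasing, in which case $w_{k_1} < w_{k_2}$ (by $\log k_1 < \log(2k_1)$, resp.\ $k_1 < 2k_1$) and $\alpha w_{k_1} + (1-\alpha)w_{k_3} \to w_{k_1} < w_{k_2}$ as $\alpha \uparrow 1$, so it holds once $\alpha$ is close enough to $1$. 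Choosing $\alpha$ beyond the largest of the finitely many thresholds (still $< 1$) settles all slots simultaneously. I expect this last bit of bookkeeping -- producing one $\alpha$ compatible with the exponential-, factorial- and quadratic-type parts at once -- to be the only step needing any care; everything else is routine once the common exponential normal form above is isolated.
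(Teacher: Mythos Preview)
Your proof is correct. The approach differs from the paper's in presentation: the paper treats the five families essentially one at a time (noting along the way that $2^{kn}$ is a cofinal subsequence of $k^n$, and deriving the $k^n n!$ and $k^n(n!)^k$ cases from the $k^n$ case by multiplying through by the appropriate factorial factor), whereas you subsume all five into the single normal form $\Gr{k}(n)=\exp\bigl(a_k n + b_k\log(n!) + c_k n^2\bigr)$ with nondecreasing coefficients and reduce every axiom to the superadditivity and nonnegativity of the three building blocks together with elementary scalar inequalities on the weights. The underlying calculations coincide---both arguments take $k_2=2k_1$ for \ref{axiom_H_Infinity} and use the limit $\alpha\uparrow 1$ (so that $\alpha w_{k_1}+(1-\alpha)w_{k_3}\to w_{k_1}<w_{k_2}$) to produce the convexity exponent---but your packaging is more uniform and makes explicit why a single $\alpha$ works for all building blocks simultaneously, which the paper handles ad hoc in the $k^n(n!)^k$ case by ``enlarging $\alpha$ if necessary''.
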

 \begin{proof}
  Property \ref{axiom_H_monotonic_in_k} can be established by trivial calculations for all examples. 
 
 The functions $\Gr{k}(n):=k^n$ are multiplicative, i.e.\ $\Gr{k}(n)\Gr{k}(m)=\Gr{k}(n+m)$ so \ref{axiom_H_Multi} holds. 
  Furthermore, setting $k_2:=2k_1$, \ref{axiom_H_Infinity} is satisfied.
  This proves that the family $\left(n\mapsto k^n\right)_{k\in \N}$ is a growth family. 

  To show that it is convex, i.e.~that it satisfies \ref{axiom_H_convexity}, we proceed as follows:
  Let $k_1\in\N$ be given and let as above $k_2=2k_1$. Now for fixed $k_3\geq k_2$ we use that 
  \begin{displaymath}
   \lim_{\alpha\to 1}k_1^\alpha\cdot k_3^{1-\alpha} = k_1<k_2,
  \end{displaymath}
  whence there is $\alpha\in\left]0,1\right[$ such that $k_1^\alpha\cdot k_3^{1-\alpha} < k_2$. For each $n\in\N$ we now have
  \begin{equation}\label{eq: alphaineq}
   \Gr{k_1}(n)^\alpha\cdot \Gr{k_3}(n)^{1-\alpha}=k_1^{n\alpha}\cdot k_3^{n(1-\alpha)} < k_2^n=\Gr{k_2}(n)
  \end{equation}
  which is what we wanted to show. 
  As $\Gr{k} (n) = 2^{kn}$ is a cofinal subsequence\footnote{Recall that subsequence $(a_{n_k})_{k}$ of $(b_n)_{n}$ is cofinal if for every $n \in \N$ there is $k \in \N$ with $n_k > n$.} of the family $(n \mapsto k^n)_{k\in \N}$, we immediately see that $(n \mapsto 2^{kn})_{k\in \N}$ is also a convex growth family.  
  Similarly, one establishes that $\Gr{k}(n) =k^{n^2}$ forms a convex growth family.
  
  Now, we turn to $\Gr{k}(n) = k^n n!$:
  It is well known that $1 \leq \binom{i+j}{j}=\frac{(i+j)!}{i!j!} \leq 2^{i+j}$ holds for all $i,j \in \N$. 
  Hence \ref{axiom_H_Multi} holds and if we choose $k_2 \geq 2k_1$ we see that \ref{axiom_H_Infinity} is satisfied.
  Multiplying \eqref{eq: alphaineq} with $n!$, \ref{axiom_H_convexity} is immediately satisfied.
  
  We now turn to the family $\Gr{k}(n)=k^n(n!)^k$ and compute for $k \in \N$ as follows 
    \begin{align*}
     \Gr{k}(i)\Gr{k}(j) &= k^i (i!)^{k} k^j (j!)^{k} = k^{i+j} (i!j!)^{k} \leq k^{i+j} ((i+j)!)^{k} = \Gr{k} (i+j).
    \end{align*}
  Thus \ref{axiom_H_Multi} is satisfied. Again choosing $k_2 = 2k_1$ allows one to see that \ref{axiom_H_Infinity} holds.
  Therefore, the functions $\Gr{k}(n) = k^n(n!)^k$ form a growth family. 
  To see that this family is convex, consider $k_3 \geq k_2 = 2k_1$. Now since $k_2>k_1$ we can choose $\alpha \in ]0,1[$ such that \eqref{eq: alphaineq} is satisfied (with respect to $k_3$).
  Enlarging $\alpha \in ]0,1[$ if necessary, we may also assume that $\alpha k_1 + (1-\alpha)k_3 < k_2$.
  With this choice of $\alpha$ we obtain for each $n\in \N$
  \begin{displaymath}
    \Gr{k_1}(n)^\alpha\Gr{k_3}(n)^{1-\alpha} = (k_1^\alpha k_3^{1-\alpha})^n (n!)^{\alpha k_1 + (1-\alpha)k_3} < k_2^n (n!)^{k_2} = \Gr{k_2}(n). \qedhere
  \end{displaymath}
 \end{proof}

  \begin{rem}
   Let $\smfunc{\phi}{\N_0}{\N_0}$ be a function with the following properties:
   \begin{itemize}
    \item $\phi(0)=0$ and $\phi(n)\geq n$
    \item $\phi(m)+\phi(n)\leq \phi(m+n)$
   \end{itemize}
  Then $\left( n\mapsto \Gr{k}(\phi(n)) \right)_{k\in\N}$ is a (convex) growth family provided that $\left(	\Gr{k}	\right)_{k\in\N}$ is a (convex) growth family. 
   In particular, this works for monomial maps $\phi(n)=n^j$ for a fixed $j\in\N$.
   This generalises the example $(n\mapsto k^{n^2})_{k\in\N}$ from Proposition \ref{prop: growth:fam}.
  \end{rem}

  \begin{rem}\label{rem: nongrowthfamily}
  The properties \ref{axiom_H_monotonic_in_k} and \ref{axiom_H_Multi} are needed to construct locally convex algebras on certain inductive limits of spaces of bounded functions. 
  Further, \ref{axiom_H_Infinity} and \ref{axiom_H_convexity} ensure that these inductive limits have certain desirable properties.
  We can not dispense with property \ref{axiom_H_Infinity} if we want to obtain groups of controlled characters (whereas \ref{axiom_H_convexity} is not crucial in this regard). 
  As an example consider the family 
  \begin{displaymath}
   \Gr{k}(n) := e^{-\frac{n}{k}} , \ k \in \N.
  \end{displaymath}
   This family arises from growth bounds for Fourier coefficients in numerical analysis (see \cite{MR3320934}).
   It turns out that it is not a growth family as it satisfies all properties (including convexity) except for \ref{axiom_H_Infinity}. 
   We return later to this in Example \ref{ex: counterex} and see that the characters whose growth is bounded in this sense do not even form a group.   
  \end{rem}

 \begin{defn}[Graded index sets]								\label{defn_graded_index_set}
  A \emph{graded index set} $\IndwAbs$ is a set $\Ind$, together with a \emph{grading}, i.e.~a function $\smfunc{\abs{\cdot}}{\Ind}{\N_0}$.
  For each $n\in\N_0$, we use the notation
  \[
   \Ind_n:=\{\tau\in\Ind , \abs{\tau}=n\}.
  \]
  We call $\IndwAbs$ \emph{pure} if $\Ind_0=\emptyset$,
  \emph{connected} if $\Ind_0$ has exactly one element,
  and \emph{of finite type} if every $\Ind_n$ is finite.
 \end{defn}

 \begin{setup}											\label{setup_graded_vector_space}
  Given a graded index set $\IndwAbs$, define the graded vector space
  \[
   \K^{(\Ind)}:=\setm{\sum_{\tau\in\Ind} c_\tau \cdot \tau}{\text{all but finitely many $c_\tau$ are zero}} =\bigoplus_{n\in\N_0}\K^{(\Ind_n)}
  \]
  of formal linear combinations of elements of $\Ind$. Every graded vector can be obtained in this fashion (using a non-canonical choice of a basis of each step). 
  We will always think of $\Ind$ as a subset (the standard basis) of $\K^{(\Ind)}$.
 \end{setup}
 
 Controlled characters will be modelled on weighted sequence spaces (see e.g.\ \cite{MR0226355}).
 
 \begin{setup}[Weighted $\ell^1$-spaces induced by growth families]
 Let $\IndwAbs$ be a graded index set and $\left(\Gr{k}	\right)_{k\in\N}$ a growth family.
  We interpret the elements of the growth family as weights and 
  define a weighted norm on $\K^{(\Ind)}$ as follows
  \[
   \ellOneNorm{\sum_{\tau\in\Ind} c_\tau \cdot \tau}{k} := \sum_{\tau\in\Ind} \abs{c_\tau} \Gr{k}(|\tau|).
  \]
  The completion with respect to this norm is the Banach space
  \[
   \ellOne{k}{\Ind}:=\setm{ \sum_{\tau\in\Ind} c_\tau \cdot \tau , \ c_\tau \in \K }{ \sum_{\tau\in\Ind} \abs{c_\tau} \Gr{k}(|\tau|) <\infty}
  \]
  which is isometrically isomorphic to $\ell^1(\Ind)$ (cf.\ \cite[Section 7]{MR1483073}) via
  \[
   \nnfunc{\ellOne{k}{\Ind}}{\ellOne{}{\Ind}}{\sum_\tau c_\tau \cdot \tau}{\sum_\tau c_\tau \Gr{k}(\abs{\tau}) \cdot \tau.}
  \]
  For a given growth family, $\ellOne{k+1}{\Ind}$ is a subspace of $\ellOne{k}{\Ind}$ (see Lemma \ref{lem_canonical_maps}).
  Hence every graded index set with a growth family gives rise to a sequence of Banach spaces 
    \begin{displaymath}
     \ellOne{1}{\Ind} \supseteq \ellOne{2}{\Ind} \supseteq \cdots \supseteq \ellOne{k}{\Ind} \supseteq \ellOne{k+1}{\Ind} \cdots.
    \end{displaymath} 
  The inclusions in the above sequence are continuous linear with operator norm at most $1$ (see again Lemma \ref{lem_canonical_maps}).
  Thus one can consider the (locally convex) projective limit 
  \[
   \ellOneLimit{\Ind}:=\bigcap_{k\in\N}\ellOne{k}{\Ind} = \lim_{\substack{\longleftarrow \\k }} \ellOne{k}{\Ind} = \setm{\sum_\tau c_\tau \cdot \tau}{\text{for each }k\in\N : \sum_\tau \abs{c_\tau} \Gr{k}(\abs{\tau})<\infty}.
  \]
  This space is a \Frechet space. 
  And if $\IndwAbs$ is of finite type it is even a \Frechet-Schwartz space\footnote{Iteratively constructing indices which satisfy \ref{axiom_H_Infinity} 
										    yields a cofinal sequence with compact inclusion maps $I_{k_1}^{k_2}$, $I_{k_2}^{k_{3}}, \ldots$, 
										    whence the limit is a \Frechet-Schwartz space.}
 (see \cite[p.\ 303]{MR1483073} for more information).
 \end{setup}

 In the following, we will often have to consider the continuity of maps $\ellOneLimit{\Ind} \rightarrow \ellOneLimit{\varInd}$ which arise as extensions of linear mappings $\smfunc{T}{\K^{(\Ind)}}{\K^{(\varInd)}}$.
 One can prove the following criterion for the continuity of the extension (the proof is in Appendix \ref{app: aux:func}).
 \begin{lem}							\label{lem_continuity_of_linear_maps}
  Let $\IndwAbs$ and $\varIndwAbs$ be two graded index sets.
  Then a linear map $\smfunc{T}{\K^{(\Ind)}}{\K^{(\varInd)}}$ extends to a (unique) continuous operator $\widetilde T \colon \ellOneLimit{\Ind} \rightarrow \ellOneLimit{\varInd}$, if and only if for each $k_1\in\N$ there is a $k_2\in\N$ and a $C>0$ such that 
  \[
   \ellOneNorm{T \tau}{k_1}\leq C \Gr{k_2}(\abs{\tau}) \quad \text{for all } \tau\in\Ind.
  \]
 \end{lem}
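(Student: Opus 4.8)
The plan is to prove the two implications separately, with essentially all of the work in showing that the $\ell^1$-type estimate forces the existence of the extension; the converse will be immediate from the universal property of the projective limit. Throughout I would use two standing facts. First, by \ref{axiom_H_monotonic_in_k} the norms $\ellOneNorm{\cdot}{k}$ increase with $k$, so $\{\ellOneNorm{\cdot}{k}\}_{k\in\N}$ is a fundamental (directed) system of continuous seminorms for the projective limit topology on $\ellOneLimit{\Ind}$, and likewise on $\ellOneLimit{\varInd}$. Second, $\K^{(\Ind)}$ is dense in $\ellOneLimit{\Ind}$, since the finite truncations of any $x=\sum_\tau c_\tau\tau\in\ellOneLimit{\Ind}$ converge to $x$ in every $\ellOne{k}{\Ind}$.

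For the ``if'' direction, assume the estimate. First I would upgrade it from basis elements to all of $\K^{(\Ind)}$ via the triangle inequality: for each $k_1$ there are $k_2,C$ with $\ellOneNorm{Tx}{k_1}\le C\,\ellOneNorm{x}{k_2}$ for every finitely supported $x$. Given $x=\sum_\tau c_\tau\tau\in\ellOneLimit{\Ind}$, I would define $\widetilde T(x)$ as the sum of the series $\sum_\tau c_\tau\,T\tau$: for fixed $k_1$ and the corresponding $k_2$ the partial sums over finite index sets are Cauchy in the Banach space $\ellOne{k_1}{\varInd}$, because their tails are bounded by $C\sum_{\abs{\tau}\text{ large}}\abs{c_\tau}\Gr{k_2}(\abs{\tau})$, which is small as $x\in\ellOne{k_2}{\Ind}$. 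The limits obtained for different $k_1$ are compatible under the bonding inclusions $\ellOne{k_1'}{\varInd}\hookrightarrow\ellOne{k_1}{\varInd}$ of Lemma \ref{lem_canonical_maps} (both are limits of the same partial sums, and these inclusions are continuous), so they assemble to a genuine element $\widetilde T(x)\in\ellOneLimit{\varInd}$. Passing to the limit in the upgraded estimate gives $\ellOneNorm{\widetilde T(x)}{k_1}\le C\,\ellOneNorm{x}{k_2}$ for all $x$ and all $k_1$; composing with the continuous inclusions $\ellOneLimit{\varInd}\hookrightarrow\ellOne{k_1}{\varInd}$ and invoking the universal property of $\ellOneLimit{\varInd}$ yields continuity of the linear map $\widetilde T$. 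It restricts to $T$ on $\K^{(\Ind)}$ by construction, and uniqueness follows from the density noted above.

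For the ``only if'' direction, suppose $\widetilde T$ is continuous and extends $T$. Fixing $k_1$, the composite of $\widetilde T$ with the inclusion $\ellOneLimit{\varInd}\hookrightarrow\ellOne{k_1}{\varInd}$ and the norm $\ellOneNorm{\cdot}{k_1}$ is a continuous seminorm on $\ellOneLimit{\Ind}$, hence dominated by $C\,\ellOneNorm{\cdot}{k_2}$ for some $k_2\in\N$ and $C>0$ by the fundamental-system property. Evaluating at $x=\tau\in\Ind\subseteq\K^{(\Ind)}\subseteq\ellOneLimit{\Ind}$, where $\widetilde T\tau=T\tau$ and $\ellOneNorm{\tau}{k_2}=\Gr{k_2}(\abs{\tau})$, gives exactly $\ellOneNorm{T\tau}{k_1}\le C\,\Gr{k_2}(\abs{\tau})$. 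The only point that needs genuine care is the one flagged in the second paragraph: checking that the term-by-term definition of $\widetilde T(x)$ produces a single element of the projective limit, i.e.\ the compatibility of the level-wise limits across the $k_1$, rather than merely an element of each $\ellOne{k_1}{\varInd}$ separately; everything else is routine manipulation of absolutely convergent series and projective limits.
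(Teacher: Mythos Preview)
Your proof is correct and follows essentially the same strategy as the paper. The ``if'' direction is identical in spirit: upgrade the basis estimate to $\ellOneNorm{Tx}{k_1}\le C\,\ellOneNorm{x}{k_2}$ on $\K^{(\Ind)}$ via the triangle inequality, then extend by density/completeness. You are simply more explicit than the paper about how the extension is assembled level by level and why the limits are compatible across different $k_1$; the paper leaves this implicit.

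For the ``only if'' direction there is a small but genuine difference. The paper invokes a separate Factorisation Lemma (Lemma~\ref{lem_factorisation}): a continuous linear map from a countable projective limit of Banach spaces (with dense projections) into a Banach space factors through some step $E_{k_0}$. This requires knowing that the projections $\ellOneLimit{\Ind}\to\ellOne{k}{\Ind}$ have dense image. Your argument bypasses this: you observe directly that $x\mapsto\ellOneNorm{\widetilde T x}{k_1}$ is a continuous seminorm on the Fr\'echet space, hence dominated by some $C\,\ellOneNorm{\cdot}{k_2}$ from the fundamental system. This is more elementary and self-contained, avoiding the density hypothesis altogether; the paper's route, on the other hand, isolates a reusable factorisation statement that is applied elsewhere (e.g.\ in Proposition~\ref{prop_properties_of_ellInftyLimit}(b)).
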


 \subsection*{Passage to spaces of bounded functions}
 In this section we construct the model spaces for the controlled character groups. 
 These arise as weighted $\ell^\infty$ spaces with values in a Banach space.
 Hence throughout this section, we choose and fix a Banach space $B$ (over $\K$) and let $\IndwAbs$ be a graded index set.
 Further, the spaces of controlled functions encountered will always be constructed with respect to some choice of growth family $(\Gr{k})_{k \in \N}$.
 In this setting we can define controlled functions and Banach spaces of controlled functions.
 
  \begin{setup}[Controlled functions]							
   \begin{enumerate}[(a)]
    \item 
      Let $k\in\N$. 
      A function $\smfunc{f}{\Ind}{B}$ is called \emph{$k$-controlled} if $\sup_{\tau} \frac{\Bnorm{f(\tau)}}{\Gr{k}(\abs{\tau})} <\infty$.
      The space of all $k$-controlled functions
      \[
       \ellInfty{k}{\Ind}{B}:=\setm{\smfunc{f}{\Ind}{B}}{ \sup_{\tau} \frac{\Bnorm{f(\tau)}}{\Gr{k}(\abs{\tau})} <\infty }
      \]
      with the norm $\norm{f}_{\ell^\infty_k}:=\sup_{\tau} \frac{\Bnorm{f(\tau)}}{\Gr{k}(\abs{\tau})}$ is a Banach space.
      In fact, the mapping 
       \[
	\nnfunc{\ellInfty{k}{\Ind}{B}}{\ellInfty{}{\Ind}{B}= \left\{f \colon \Ind \rightarrow B \mid \sup_{\tau }\Bnorm{f(\tau)} <\infty\right\}}{f}{\frac{f}{\Gr{k}(\abs{\cdot})}.}
      \]
      is an isometric isomorphism to the Banach space $\ellInfty{}{\Ind}{B}$. 
      Since growth families satisfy \ref{axiom_H_monotonic_in_k}, it is easy to see that $\ellInfty{k}{\Ind}{B} \subseteq \ellInfty{k+1}{\Ind}{B}$ and the inclusion map 
      \[
	\nnfunc{\ellInfty{k}{\Ind}{B}}{\ellInfty{k+1}{\Ind}{B}}{f}{f}
      \]
      is a continuous operator with operator norm at most $1$ for all $k \in \N$.
    \item 
      A function $\smfunc{f}{\Ind}{B}$ is called \emph{controlled} if there is a $k\in\N$ such that $f$ is $k$-controlled, i.e.~$f\in\ellInfty{k}{\Ind}{B}$.
      The \emph{space of all controlled functions} is denoted by $\ellInftyLimit{\Ind}{B}$. 
      This space is locally convex as it is the locally convex direct limit 
      \begin{align*}
	\ellInftyLimit{\Ind}{B} &\coloneq \setm{\smfunc{f}{\Ind}{B}}{\text{there is a $k\in\N$ such that }\sup_{\tau} \frac{\Bnorm{f(\tau)}}{\Gr{k}(\abs{\tau})} <\infty}  \\ 
			    &\hphantom{:}=\lim_{\substack{\longrightarrow\\ k}} \ellInfty{k}{\Ind}{B}
      \end{align*}
      where the bonding maps of the limit are the continuous inclusions from (a).
  \end{enumerate}\label{setup_controlled_maps}
\end{setup}
The spaces of controlled functions will allow us to build model spaces for the Lie groups which are constructed later.
We will now discuss properties of spaces of controlled functions.
Beginning with the $k$-controlled functions we will establish that the space of all $k$-controlled functions is closely connected to certain linear maps.
This will explain in which sense characters of Hopf algebras are $k$-controlled. 

\begin{defn}\label{defn: lin:kcontrolled} 
 Let $k \in \N$ and endow $\K^{(\Ind)}$ with the $\ell^1_k$ norm. A continuous linear map $\smfunc{\phi}{\K^{(\Ind)}}{B}$ is called \emph{$k$-controlled}.
 Note that every $k$-controlled linear map extends (uniquely) to a continuous linear map $\ellOne{k}{\Ind} \rightarrow B$ on the completion, which has the same operator norm.     
\end{defn}

\begin{setup}										\label{setup_linear_maps_defined_on_a_basis}
   As $\Ind$ is a basis of the vector space $\K^{(\Ind)}$, a linear map $\smfunc{\phi}{\K^{(\Ind)}}{B}$ is uniquely determined by its values on $\Ind$.
   This yields an isomorphism of vector spaces
   \[
    \nnfunc{\Hom(\K^{(\Ind)},B)}{B^\Ind}{\phi}{f_\phi:=\phi|_\Ind}
   \]
   with inverse $\nnfunc{B^\Ind}{\Hom(\K^{(\Ind)},B)}{f}{\phi_f}$,
   where $\smfunc{\phi_f}{\K^{(\Ind)}}{B}$ is the unique linear map extending $\smfunc{f}{\Ind}{B}$. 
   It is not hard to see that this isomorphism induces an isometric isomorphism $\{f \in \textup{Hom}_{\K} (\K^{(\Ind)},B) \mid f \text{ is } k-$controlled$\} \rightarrow \ellInfty{k}{\Ind}{B}$.
   For the readers convenience a proof of this fact is recorded in Lemma \ref{lem: k:isom:iso}
  \end{setup}

   With the direct limit topology $\ellInftyLimit{\Ind}{B}$ is an (LB)-space, i.e.~a direct limit of an injective sequence of Banach spaces. 
      In general, (LB)-spaces may not be well-behaved, e.g.\ they may fail to be Hausdorff.
      However, one can prove the following.

 \begin{prop}[Properties of $\ellInftyLimit{\Ind}{B}$]								\label{prop_properties_of_ellInftyLimit}
  Let $\IndwAbs$ be a graded index set, $\left(\Gr{k}	\right)_{k\in\N}$ a growth family and $B$ be a Banach space. 
  Then the following is satisfied.
  \begin{enumerate}[\upshape (a)]
   \item The linear map $\nnfunc{\ellInftyLimit{\Ind}{B}}{B^\Ind}{f}{f}$ is continuous (where $B^\Ind$ is endowed with the topology of pointwise convergence). Thus $\ellInftyLimit{\Ind}{B}$ is a Hausdorff space.
   \item The linear map
		$
		 \ellInftyLimit{\Ind}{B} \rightarrow \Hom\left(	\ellOneLimit{\Ind} , B	\right), f \mapsto \left(\sum_\tau c_\tau \cdot \tau	\mapsto \sum_\tau c_\tau f(\tau) \right)
		$
	       is bijective and continuous (where $\Hom\left(	\ellOneLimit{\Ind} , B	\right)$ is endowed with the topology of uniform convergence on bounded subsets (the so called strong topology)).
   \item Assume that $\IndwAbs$ is pure. Then for every $r>0$ and every controlled function $f$ there is a $k\in\N$ such that $\norm{f}_{\ell^\infty_k}<r$, i.e.		$
		 \ellInftyLimit{\Ind}{B} = \bigcup_{k\in\N} \oBallin{r}{\ellInfty{k}{\Ind}{B}}{f}.
		$
   \item The space $\ellInftyLimit{\Ind}{B}$ is complete, if one of the following statements hold
		\begin{itemize}
		  \item The growth family is convex
		  \item $\IndwAbs$ is of finite type and $B$ is finite-dimensional.
		\end{itemize}
  \end{enumerate}
 \end{prop}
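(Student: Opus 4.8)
The plan is to establish the four items in turn, using throughout that a linear map out of the locally convex direct limit $\ellInftyLimit{\Ind}{B}=\lim_{\substack{\longrightarrow\\ k}}\ellInfty{k}{\Ind}{B}$ is continuous if and only if its restriction to each step $\ellInfty{k}{\Ind}{B}$ is continuous. For (a), recall that $B^\Ind$ carries the initial topology with respect to the point evaluations $\ev_\tau\colon f\mapsto f(\tau)$; since $\Bnorm{f(\tau)}\le\Gr{k}(\abs{\tau})\,\norm{f}_{\ell^\infty_k}$ on each step, every $\ev_\tau$ is continuous on $\ellInftyLimit{\Ind}{B}$, hence so is the map into $B^\Ind$. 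This map is injective and $B^\Ind$ is Hausdorff (a product of Hausdorff spaces), and a continuous injection into a Hausdorff space forces its domain to be Hausdorff.

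For (b), I would first verify that $\Phi_f\colon\sum_\tau c_\tau\cdot\tau\mapsto\sum_\tau c_\tau f(\tau)$ is well defined: for $f\in\ellInfty{k}{\Ind}{B}$ and $x=\sum_\tau c_\tau\cdot\tau\in\ellOneLimit{\Ind}\subseteq\ellOne{k}{\Ind}$ one has $\sum_\tau\abs{c_\tau}\Bnorm{f(\tau)}\le\norm{f}_{\ell^\infty_k}\,\ellOneNorm{x}{k}<\infty$, so the series converges absolutely and $\Phi_f\colon\ellOneLimit{\Ind}\to B$ is continuous linear. Injectivity of $f\mapsto\Phi_f$ is clear since $\Phi_f(\tau)=f(\tau)$. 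For surjectivity, a continuous linear map $\phi$ on the \Frechet space $\ellOneLimit{\Ind}$ is dominated by a single seminorm of the increasing fundamental system, i.e.\ $\Bnorm{\phi(x)}\le C\,\ellOneNorm{x}{k}$ for some $k,C$; evaluating on the basis yields $f:=\phi|_\Ind\in\ellInfty{k}{\Ind}{B}$, and $\Phi_f=\phi$ because both are continuous and agree on the dense subspace $\K^{(\Ind)}$. Continuity of $f\mapsto\Phi_f$ into the strong topology is checked step by step: for $D\subseteq\ellOneLimit{\Ind}$ bounded, $M_k:=\sup_{x\in D}\ellOneNorm{x}{k}<\infty$, hence $\sup_{x\in D}\Bnorm{\Phi_f(x)}\le M_k\,\norm{f}_{\ell^\infty_k}$, the required seminorm estimate.

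For (c), purity means $\abs{\tau}\ge1$ for all $\tau\in\Ind$. Given a controlled $f$, say $f\in\ellInfty{k_0}{\Ind}{B}$ with $C:=\norm{f}_{\ell^\infty_{k_0}}$, iterating \ref{axiom_H_Infinity} $j$ times produces $k_j\ge k_0$ with $\Gr{k_j}(n)\ge2^{jn}\Gr{k_0}(n)$ for all $n$; therefore $\Bnorm{f(\tau)}/\Gr{k_j}(\abs{\tau})\le C\,2^{-j\abs{\tau}}\le C\,2^{-j}$ for every $\tau$, and choosing $j$ with $C\,2^{-j}<r$ gives $\norm{f}_{\ell^\infty_{k_j}}<r$.

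Item (d) is the substantial one and I would split it along the two hypotheses. If $\IndwAbs$ is of finite type, then $\ellOneLimit{\Ind}$ is a Fr\'echet--Schwartz space (as recorded in the text), so its strong dual is a (DFS)-space -- in particular complete -- and coincides with the regular inductive limit $\lim_{\substack{\longrightarrow\\ k}}(\ellOne{k}{\Ind})'$. Under the isometric identification $(\ellOne{k}{\Ind})'\cong\ellInfty{k}{\Ind}{\K}$ furnished by the $\ell^1$--$\ell^\infty$ duality (the weights agree), this exhibits $\ellInftyLimit{\Ind}{\K}$ as the complete space $(\ellOneLimit{\Ind})'_\beta$; for finite-dimensional $B$ one then has $\ellInftyLimit{\Ind}{B}\cong\ellInftyLimit{\Ind}{\K}^{\dim B}$, a finite product of complete spaces. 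If instead the growth family is convex, the strategy is to upgrade the continuous bijection of (b) to a topological isomorphism $\ellInftyLimit{\Ind}{B}\cong L_\beta(\ellOneLimit{\Ind},B)$ and then to invoke completeness of the target: a Cauchy net in $L_\beta(F,B)$ with $F$ a \Frechet space converges pointwise to a linear map which is bounded on bounded sets, hence continuous, the convergence being uniform on bounded sets. The role of \ref{axiom_H_convexity} is exactly to supply the missing half of that isomorphism -- it is an interpolation inequality forcing the weight system $(1/\Gr{k})_k$ to be \anf{regularly decreasing} (equivalently, $\ellOneLimit{\Ind}$ to be distinguished), so that the strong dual topology and the inductive-limit topology coincide; one may also phrase this as bounded retractivity of the inductive limit. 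I expect this translation of \ref{axiom_H_convexity} into the distinguishedness/bounded-retractivity statement to be the main obstacle, while items (a)--(c) and the finite-type case of (d) are essentially bookkeeping with the direct-limit universal property and the $\ell^1$--$\ell^\infty$ duality.
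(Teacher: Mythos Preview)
Your arguments for (a), (b) and (c) are essentially the same as the paper's: universal property of the direct limit for continuity, the factorisation lemma for surjectivity in (b), and iteration of \ref{axiom_H_Infinity} for (c).

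For (d) you take a genuinely different route. In the finite-type case you pass through the Fr\'echet--Schwartz duality: $(\ellOneLimit{\Ind})'_\beta$ is (DFS), hence complete, and coincides with $\lim_{\to}(\ellOne{k}{\Ind})'\cong\ellInftyLimit{\Ind}{\K}$; then take a finite power for finite-dimensional $B$. This is correct and is the dual picture of what the paper does: the paper shows directly that the bonding maps $\ellInfty{k}{\Ind}{B}\to\ellInfty{k'}{\Ind}{B}$ are compact (multiplication by a $c_0$-sequence), so $\ellInftyLimit{\Ind}{B}$ is a Silva space. Since Silva and (DFS) are the same thing, the two arguments are equivalent; yours avoids the explicit compactness computation at the price of invoking the FS/DFS duality.

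In the convex case, however, there is a real gap. You propose to upgrade the bijection in (b) to a topological isomorphism and then use completeness of $L_\beta(\ellOneLimit{\Ind},B)$. But the paper explicitly flags, right after the statement, that it is not clear whether (b) is a topological isomorphism in general; your proposed translation of \ref{axiom_H_convexity} into a ``regularly decreasing''/distinguishedness statement for the weight system is asserted rather than proved, and even if carried out for scalar $B$ it does not automatically give the $B$-valued identification $L_\beta(\ellOneLimit{\Ind},B)\cong\lim_{\to}\ellInfty{k}{\Ind}{B}$. The paper avoids all of this: it invokes Wengenroth's result that an (LB)-space is complete once it is compactly regular, and verifies compact regularity directly from \ref{axiom_H_convexity} via the Wengenroth-type condition
\[
\oBallin{1}{\ellInfty{k_1}{\Ind}{B}}{0}\cap\oBallin{\delta}{\ellInfty{k_3}{\Ind}{B}}{0}\subseteq\oBallin{\epsilon}{\ellInfty{k_2}{\Ind}{B}}{0},
\]
which follows in two lines from the interpolation inequality $\Gr{k_1}^\alpha\Gr{k_3}^{1-\alpha}\le\Gr{k_2}$. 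This is both shorter and delivers the stronger conclusion of compact regularity (needed later in the paper, e.g.\ in the regularity proofs), without having to resolve the topological-isomorphism question in (b).
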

 It is not clear to the authors up to this point whether the linear map in part (b) may fail to be a topological isomorphism for some choices of 
 $\left(	\Gr{k}	\right)_{k\in\N}$, $\IndwAbs$, and $B$. 
 However, we will not need that fact at all.
 \begin{proof}
  \begin{enumerate}[(a)]
 \item Follows directly from the universal property of locally convex direct limits applied to the family $\ellInfty{k}{\Ind}{B} \rightarrow B^\Ind, f\mapsto f$. 	     
       Since $B^\Ind$ is Hausdorff, continuity of the inclusion implies that $\ellInftyLimit{\Ind}{B}$ is Hausdorff as well. 
 \item Continuity of the linear map follows directly of the universal property of locally convex direct limits applied to 
 $
  \ellInfty{k}{\Ind}{B} \cong \Hom(\ellOne{k}{\Ind},B)\rightarrow \Hom(\ellOneLimit{\Ind},B).
 $
 Clearly the map is injective and it is surjective by Lemma \ref{lem_factorisation}.
 \item Consider $f\in\ellInftyLimit{\Ind}{B}$ and choose $k_1\in\N$ such that $f\in\ellInfty{k_1}{\Ind}{B}$.
 By property \ref{axiom_H_Infinity} there is a $k_2\geq k_1$ such that
 \[
  \norm{f}_{\ell^\infty_{k_2}} = 					\sup_{\tau\in\Ind}\frac{\Bnorm{f(\tau)}}{\Gr{k_2}(\abs{\tau})}
			\stackrel{\ref{axiom_H_Infinity}}{\leq} 	\sup_{\tau\in\Ind}\frac{\Bnorm{f(\tau)}}{2^{\abs{\tau}}\cdot  \Gr{k_1}(\abs{\tau})}
			\leq 	 					\sup_{\tau\in\Ind}\frac{\Bnorm{f(\tau)}}{2 \cdot  \Gr{k_1}(\abs{\tau})}
			= 						\frac{1}{2}\cdot \norm{f}_{\ell^\infty_{k_1}}.
 \]
 In the second inequality we used the fact that $\abs{\tau}\geq1$ since $\IndwAbs$ is pure by assumption.

 Repeating this process, we get a sequence $(k_m)_{m\in\N}$ such that $\left(\norm{f}_{\ell^\infty_{k_m}}\right)_{m\in\N}$ converges to zero.
 In particular, there is $k_m$ such that $\norm{f}_{\ell^\infty_{k_m}}$ is less than $r$.\qedhere
 \item By \cite[Corollary 6.5]{MR1977923}, completeness of an (LB)-space follows if we can show that the limit $\ellInftyLimit{\Ind}{B}$ is compactly regular, i.e.~for every compact set $K\subseteq \ellInftyLimit{\Ind}{B}$ there is a $k\in\N$ such that $K$ is a compact subset of $\ellInfty{k}{\Ind}{B}$.
 If the growth family is convex, compact regularity follows from Lemma \ref{lem_creg_lim}. 
 If on the other hand, $\IndwAbs$ is of finite type and the Banach space $B$ is finite-dimensional, we obtain compact regularity from Lemma \ref{lem_silva}.
 \end{enumerate}
 \end{proof}

 \section{Analytic manifolds of controlled characters on free monoids}													\label{section_controlled_monoid}
 Throughout this section, let $B$ be a fixed commutative\footnote{Most of the results will hold for noncommutative algebras as well -- but in Section \ref{section_controlled_Hopf} we will need commutativity anyway.}
 Banach algebra, i.e.~ a commutative associative unital $\K$-algebra with a complete submultiplicative norm $\Bnorm{\cdot}$ such that $\Bnorm{1_B}=1$. 
 Further, we fix a (convex) growth family $(\omega_k)_{k \in \N}$ with respect to which all weighted sequence spaces, e.g.\ $\ellInfty{k}{\Ind}{B}$, in this section will be constructed.
 
 \begin{defn}[Graded monoid]									\label{defn_graded_monoid}
  A \emph{graded monoid} $(M,\abs{\cdot})$ is a monoid $M$, together with a monoid homomorphism
  \[
   \smfunc{\abs{\cdot}}{M}{(\N_0,+).}
  \]
  In particular, every graded monoid is a graded index set in the sense of Definition \ref{defn_graded_index_set}.

  For each graded monoid $(M,\abs{\cdot})$ the vector space $\K^{(M)}$ of finite formal linear combinations introduced in \ref{setup_graded_vector_space} becomes a graded associative unital algebra by extending the monoid multiplication bilinearly.
 \end{defn}

 \begin{ex}\label{ex:monoids}
  Let $(\Sigma,\abs{\cdot})$ be a graded index set. There are two natural ways of constructing a graded monoid over $\K \in \{\R,\C\}$ from $(\Sigma,\abs{\cdot})$.
  \begin{itemize}
   \item [(a)] The \emph{free monoid} $\FreeMonoid{\Sigma}$ (also called the \emph{word monoid}) over the alphabet $\Sigma$ consists of all finite words. 
               The degree of a word $w\in \FreeMonoid{\Sigma}$ is the sum of the degrees of the letters of $w$. The neutral element of $\FreeMonoid{\Sigma}$ is the empty word (whose degree is zero).
	       The unital algebra $\K\langle \Sigma \rangle := \K^{(\FreeMonoid{\Sigma})}$ is the noncommutative polynomial algebra over the alphabet $\Sigma$, which is \emph{free algebra} (or \emph{tensor algebra}) over $\K^{(\Sigma)}$.
   \item [(b)] Identify two words in $\FreeMonoid{\Sigma}$ when they agree up to a permutation of the letters to obtain the \emph{free commutative monoid} $\FreeCommMonoid{\Sigma}$.
	       This is a graded monoid with respect to the same grading. 
	       The unital algebra $\K[\Sigma]:=\K^{(\FreeCommMonoid{\Sigma})}$ is the commutative polynomial algebra over the alphabet $\Sigma$ (the symmetric algebra over $\K^{(\Sigma)}$).
  \end{itemize}
  In both examples, the grading on the monoid is connected if and only if the grading on $\Sigma$ is pure and the monoid is of finite type if and only if $(\Sigma,\abs{\cdot})$ is of finite type.
 \end{ex}

 \begin{setup}
  By a $B$-valued \emph{character} of the graded monoid $(M,\abs{\cdot})$ we mean a monoid homomorphism $\smfunc{\chi}{M}{(B,\cdot)}$.
  The isomorphism of \ref{setup_linear_maps_defined_on_a_basis} maps the characters of $M$ bijectively to the characters of the algebra $\K^{(M)}$, i.e.~the algebra homomorphisms of $\K^{(M)}$ to the unital algebra $B$.
 \end{setup}

 \begin{defn}[Controlled characters]
  Let $M$ be a graded monoid.
  A character $\smfunc{\chi}{M}{B}$ is called a \emph{controlled character} of the monoid $M$ if it is a controlled function, i.e.~$\chi\in\ellInftyLimit{M}{B}$.
  The set of controlled characters endowed with the subspace topology of $\ellInftyLimit{M}{B}$ is denoted by $\controlledChar{M}{B}.$
 \end{defn}
 The main result will be the fact that for the free monoid $M$, this subset is very well-behaved, in particular it is an analytic submanifold (see Proposition \ref{prop_characters_form_submanifold}).
 \begin{lem}												\label{lem_controlled_functions_on_generators}
  Let $\smfunc{\chi}{M}{B}$ be a $B$-valued character. 
  Assume that $M$ is (as a monoid) generated by a subset $\Sigma\subseteq M$ and let $k\in\N$.
  Then the following are equivalent:
  \begin{enumerate}[\upshape (i)]
   \item $\chi\in\ellInfty{k}{M}{B}$ with $\norm{\chi}_{\ell^\infty_k}= 1$
   \item $\chi|_\Sigma\in\ellInfty{k}{\Sigma}{B}$ with $\norm{\chi|_\Sigma}_{\ell^\infty_k}\leq 1$
  \end{enumerate}
 \end{lem}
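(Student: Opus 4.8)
The plan is to prove the two implications separately, with essentially all of the content in $(ii)\Rightarrow(i)$; the reverse implication is immediate.

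\emph{$(i)\Rightarrow(ii)$.} Since $\Sigma\subseteq M$ as graded index sets, the supremum defining $\norm{\chi|_\Sigma}_{\ell^\infty_k}$ runs over a subset of the index set $M$, so $\norm{\chi|_\Sigma}_{\ell^\infty_k}\le\norm{\chi}_{\ell^\infty_k}=1$; in particular $\chi|_\Sigma\in\ellInfty{k}{\Sigma}{B}$.

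\emph{$(ii)\Rightarrow(i)$.} First I would record the value at the neutral element: as $\chi$ is a monoid homomorphism we have $\chi(\emptyword)=1_B$, and as $\abs{\cdot}$ is a monoid homomorphism into $(\N_0,+)$ we have $\abs{\emptyword}=0$; together with $\Gr{k}(0)=1$ (axiom \ref{axiom_H_monotonic_in_k}) and $\Bnorm{1_B}=1$ this gives $\Bnorm{\chi(\emptyword)}/\Gr{k}(0)=1$. Hence, once we know $\chi\in\ellInfty{k}{M}{B}$, its $\ell^\infty_k$-norm is automatically at least $1$, and it suffices to establish the pointwise bound $\Bnorm{\chi(\tau)}\le\Gr{k}(\abs{\tau})$ for every $\tau\in M$ --- this simultaneously shows $\chi\in\ellInfty{k}{M}{B}$ and $\norm{\chi}_{\ell^\infty_k}\le1$.

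To obtain this bound, fix $\tau\in M$ and, using that $\Sigma$ generates $M$ as a monoid, write $\tau=\sigma_1\cdots\sigma_n$ with $\sigma_1,\dots,\sigma_n\in\Sigma$ (the case $n=0$ being the neutral element handled above). Multiplicativity of $\chi$, submultiplicativity of $\Bnorm{\cdot}$, hypothesis $(ii)$, and an iterated application of axiom \ref{axiom_H_Multi} (together with additivity of $\abs{\cdot}$, so that $\abs{\tau}=\sum_i\abs{\sigma_i}$) give
$$\Bnorm{\chi(\tau)}=\Bnorm{\chi(\sigma_1)\cdots\chi(\sigma_n)}\le\prod_{i=1}^n\Bnorm{\chi(\sigma_i)}\le\prod_{i=1}^n\Gr{k}(\abs{\sigma_i})\le\Gr{k}\Bigl(\sum_{i=1}^n\abs{\sigma_i}\Bigr)=\Gr{k}(\abs{\tau}).$$
Since $\Gr{k}$ is $\N$-valued we may divide by $\Gr{k}(\abs{\tau})\ge1$ to finish. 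I do not expect a genuine obstacle here: the statement is a direct consequence of submultiplicativity of the Banach-algebra norm and the growth-family axioms \ref{axiom_H_monotonic_in_k} and \ref{axiom_H_Multi}. The only point meriting care is isolating the neutral element, so that the exact normalisation $\norm{\chi}_{\ell^\infty_k}=1$ (rather than merely $\le 1$) is recovered in $(ii)\Rightarrow(i)$.
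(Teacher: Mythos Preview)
Your proof is correct and follows essentially the same approach as the paper: the implication $(i)\Rightarrow(ii)$ is immediate by restriction, and for $(ii)\Rightarrow(i)$ both you and the paper write an arbitrary element as a product of generators and apply multiplicativity of $\chi$, submultiplicativity of $\Bnorm{\cdot}$, and axiom \ref{axiom_H_Multi}, with the neutral element handled separately via $\Bnorm{1_B}=1=\Gr{k}(0)$. Your treatment is slightly more explicit than the paper's in pointing out that the neutral element forces the norm to be exactly $1$ rather than merely $\le 1$.
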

 \begin{proof}
  It is obvious that (i) implies (ii). To show the converse, assume that $\chi|_\Sigma$ is $k$-controlled with norm at most $1$.
  This means that for each $\tau\in\Sigma$ we have:
  \[
   \Bnorm{\chi(\tau)}\leq \Gr{k}(\abs{\tau}). 
  \]
  We will now show that $\smfunc{\chi}{M}{B}$ is also $k$-controlled of norm at most $1$.
  Let $w=\tau_1\cdots \tau_\ell\in M$ be a finite word of length $\ell\geq1$. Then we can estimate:
  \begin{align*}
   \Bnorm{\chi(w)}   &	= 	\Bnorm{\chi(\tau_1\cdots \tau_\ell)}
				= 	\Bnorm{\chi(\tau_1)\cdots \chi(\tau_\ell)}
				\leq 	\Bnorm{\chi(\tau_1)}\cdots\Bnorm{\chi(\tau_\ell)}
		      \\&	\leq	\Gr{k}(\abs{\tau_1})\cdots \Gr{k}(\abs{\tau_\ell}) \stackrel{\ref{axiom_H_Multi}}{\leq}	\Gr{k}(\abs{\tau_1}\cdots \abs{\tau_\ell}) =	\Gr{k}(\abs{\tau_1\cdots \tau_\ell})
		      	=	\Gr{k}(\abs{w}).
  \end{align*}
 For the empty word we obtain $
   \Bnorm{\chi(\emptyword)} 	=	\Bnorm{1_B}	=	1	\stackrel{\ref{axiom_H_monotonic_in_k}}{=}	\Gr{k}(0) = \Gr{k}(\abs{\emptyword}). \qedhere
  $
 \end{proof}

 Note that we need the following proposition only for the monoids from Example \ref{ex:monoids} constructed over the field of complex numbers.
 However, an easy complexification argument yields an analogous statement for these monoids in the case $\K  = \R$.
 \begin{prop}												\label{prop_extension_Banach}
  Let $(\Sigma,\abs{\cdot})$ be a graded index set, $M=\FreeMonoid{\Sigma}$ or $M=\FreeCommMonoid{\Sigma}$ and fix $k\in\N$.
  Assume further, that $B$ is a commutative Banach algebra over $\C$.
  Then every $k$-controlled function $f\in\ellInfty{k}{\Sigma}{B}$ with $\norm{f}_{\ell^\infty_k}<1/2$ uniquely extends to a $k$-controlled character $\chi_f\in\ellInfty{k}{M}{B}$ with $\norm{\chi_f}_{\ell^\infty_k}\leq 1$ and the (nonlinear) extension operator  
  \[
   \func{\Psi_k}{		\oBallin{1/2}{\ellInfty{k}{\Sigma}{B}}{0}		}{	\ellInfty{k}{M}{B}	}{f}{\chi_f}
  \]
  between open subsets of Banach spaces is of class $C^\omega_\C$.
 \end{prop}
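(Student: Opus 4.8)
The plan is to dispose of the algebraic part first and then reduce the analyticity of $\Psi_k$ to writing down an explicit local power series. First I would observe that, since $\Sigma$ generates $M$, any character $\chi\colon M\to B$ extending a given $f\in\ellInfty{k}{\Sigma}{B}$ is forced to satisfy $\chi(\tau_1\cdots\tau_\ell)=f(\tau_1)\cdots f(\tau_\ell)$; because $B$ is commutative, in the case $M=\FreeCommMonoid{\Sigma}$ the right-hand side does not depend on the order of the letters, so this prescription does define a character $\chi_f$ of $M$, which is then the unique extension of $f$. For the norm estimate, if $\norm{f}_{\ell^\infty_k}<1/2\le 1$ then Lemma~\ref{lem_controlled_functions_on_generators}, applied to the generating set $\Sigma$, yields at once that $\chi_f\in\ellInfty{k}{M}{B}$ with $\norm{\chi_f}_{\ell^\infty_k}=1$. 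Hence $\Psi_k$ is well defined and maps into the unit sphere of $\ellInfty{k}{M}{B}$; only its complex analyticity remains.

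For that I would fix $f_0$ in the open $1/2$-ball, set $\theta:=\norm{f_0}_{\ell^\infty_k}<1/2$, and expand, for $h\in\ellInfty{k}{\Sigma}{B}$ and a word $w=\tau_1\cdots\tau_\ell\in M$,
\[
 \Psi_k(f_0+h)(w)=\prod_{i=1}^{\ell}\bigl(f_0(\tau_i)+h(\tau_i)\bigr)=\sum_{n=0}^{\ell}P_n(h)(w),
\]
where $P_n(h)(w):=\sum_{\substack{S\subseteq\{1,\dots,\ell\}\\ |S|=n}}\prod_{i\in S}h(\tau_i)\prod_{i\notin S}f_0(\tau_i)$ and $P_n(h)(w)=0$ for $n>\ell$. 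Each $P_n$ is $n$-homogeneous and arises by polarisation from a symmetric $n$-linear map on $\ellInfty{k}{\Sigma}{B}$. The key step is the estimate: using $\Bnorm{h(\tau_i)}\le\norm{h}_{\ell^\infty_k}\Gr{k}(\abs{\tau_i})$, the analogous bound for $f_0$, and property~\ref{axiom_H_Multi} in the iterated form $\prod_{i=1}^{\ell}\Gr{k}(\abs{\tau_i})\le\Gr{k}(\abs{w})$, one gets for every word $w$ of length $\ell\ge n$
\[
 \frac{\Bnorm{P_n(h)(w)}}{\Gr{k}(\abs{w})}\le\binom{\ell}{n}\,\norm{h}_{\ell^\infty_k}^{\,n}\,\theta^{\,\ell-n},
\]
so that, by the negative binomial series $\sum_{\ell\ge n}\binom{\ell}{n}\theta^{\ell-n}=(1-\theta)^{-(n+1)}$, one obtains $\norm{P_n(h)}_{\ell^\infty_k}\le(1-\theta)^{-(n+1)}\norm{h}_{\ell^\infty_k}^{\,n}$; the identical computation bounds the associated $n$-linear form, so each $P_n$ is a continuous homogeneous polynomial $\ellInfty{k}{\Sigma}{B}\to\ellInfty{k}{M}{B}$.

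It then follows that $\sum_{n\ge0}\norm{P_n(h)}_{\ell^\infty_k}\le(1-\theta)^{-1}\sum_{n\ge0}\bigl(\norm{h}_{\ell^\infty_k}/(1-\theta)\bigr)^n$, which converges, uniformly on slightly smaller balls, whenever $\norm{h}_{\ell^\infty_k}<1-\theta$; as $\theta<1/2$ this radius exceeds $1/2$. Using completeness of $\ellInfty{k}{M}{B}$ and continuity of the point evaluations, the sum $\sum_nP_n(h)$ agrees at each word $w$ with the finite sum $\sum_{n=0}^{\ell}P_n(h)(w)=\Psi_k(f_0+h)(w)$; hence on a neighbourhood of $f_0$ inside the domain, $\Psi_k$ equals the locally uniformly convergent series $\sum_{n\ge0}P_n(\,\cdot\,-f_0)$ of continuous homogeneous polynomials. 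By the standard characterisation of $C^\omega_\C$-maps between open subsets of complex Banach spaces (or, alternatively, because $\Psi_k$ is locally bounded and complex-Gâteaux differentiable), this shows $\Psi_k$ is of class $C^\omega_\C$ near $f_0$, and running the argument at every $f_0$ in the $1/2$-ball finishes the proof. I expect the only genuine obstacle to be this key estimate, in particular the uniform control of $\sup_{\ell\ge n}\binom{\ell}{n}\theta^{\ell-n}$ over all lengths $\ell$: this is exactly where $f_0$ being strictly inside the $1/2$-ball enters (any bound $<1$ on $\theta$ would suffice) and what fixes the radius of convergence $1-\theta$; everything else is bookkeeping with word lengths together with property~\ref{axiom_H_Multi}.
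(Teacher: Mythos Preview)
Your proof is correct, and it takes a genuinely different route from the paper's. The paper proceeds in two separate stages: first it proves continuity of $\Psi_k$ by a direct $\epsilon$--$\delta$ estimate (using essentially the same binomial expansion you write down, but only to bound $\chi_{f_0+f_1}-\chi_{f_0}$), and then it upgrades continuity to complex analyticity by a weak-holomorphy criterion (Proposition~\ref{prop: avs:weaka}): since each point evaluation $\pi_w\circ\Psi_k$ is a continuous $\ell$-homogeneous polynomial in $f$ and the family $\{\phi\circ\pi_w : \phi\in B',\, w\in M\}$ separates points, the already-continuous $\Psi_k$ is forced to be holomorphic. Your approach instead packages the whole argument into a single local power-series expansion $\Psi_k(f_0+h)=\sum_{n\ge0}P_n(h)$, establishes the explicit bound $\norm{P_n(h)}_{\ell^\infty_k}\le(1-\theta)^{-(n+1)}\norm{h}_{\ell^\infty_k}^{\,n}$ via the negative binomial identity, and reads off analyticity directly from the characterisation of $C^\omega_\C$-maps as locally uniform limits of continuous homogeneous polynomials. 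The paper's method is more modular and reuses an abstract weak-analyticity lemma that recurs later (e.g.\ in Theorem~\ref{thm_regularity}); yours is more self-contained, avoids that lemma entirely, and in fact yields a quantitative radius of convergence $1-\theta>1/2$ rather than merely ``some neighbourhood''. Both rely on the same pointwise identity and the same use of~\ref{axiom_H_Multi}, so the combinatorial core is identical.
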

 \begin{proof}
  \textbf{Step 1: $\Psi_k$ is well-defined.}	
  The universal property of the free monoid (or the free commutative monoid, respectively) allows one to uniquely extend $\smfunc{f}{\Sigma}{B}$ to a character $\smfunc{\chi_f}{M}{B}$ 
  (using multiplicative continuation).
  By Lemma \ref{lem_controlled_functions_on_generators} it follows that this extension $\chi_f$ is $k$-controlled with operator norm $1$, if $f\in \oBallin{1}{\ellInfty{k}{\Sigma}{B}}{0}$. 
  Therefore, the extension operator $\Psi_k$ is well-defined. \medskip

  \textbf{Step 2: $\Psi_k$ is continuous.}
  Let $f_0\in \oBallin{1/2}{\ellInfty{k}{\Sigma}{B}}{0}$ and let $\epsilon>0$ be given. 
  We may assume that $\epsilon\leq1$. 
  Let $f\in \oBallin{1/2}{\ellInfty{k}{\Sigma}{B}}{0}$ such that $f_1:=f-f_0$ has norm $\norm{f_1}_{\ell^\infty_k}\leq\frac{\epsilon}{2}$.
  We will show that
  \[
   \norm{\chi_{f_0+f_1}-\chi_{f_0}}_{\ell^\infty_k}\leq \epsilon.
  \]
  Clearly it suffices to prove that for every word $w\in M$ of length $\ell\geq0$ we have
  \[
   \Bnorm{\chi_{f_0+f_1}(w)-\chi_{f_0}(w)}\leq \epsilon \cdot \Gr{k}(\abs{w}).
  \]
  For the empty word $w=\emptyword$ this holds since the left hand side of the inequality is zero as characters map the unit to the unit.
  So, let $w=\tau_1\cdots\tau_\ell$ be a word of length $\ell\geq1$.
  \begin{align*}
   &\Bnorm{\chi_{f_0+f_1}(w)-\chi_{f_0}(w)}	  	=	\Bnorm{\prod_{j=1}^\ell \left( f_0(\tau_j) + f_1(\tau_j)	\right) - \prod_{j=1}^\ell f_0(\tau_j) }
						\\	=&	\Bnorm{\sum_{\alpha \in \smset{0,1}^\ell} \prod_{j=1}^\ell f_{\alpha(j)}(\tau_j) - \prod_{j=1}^\ell f_{0}(\tau_j)}
							=	\Bnorm{\sum_{\substack{\alpha \in \smset{0,1}^\ell\\ \alpha \neq 0} } \prod_{j=1}^\ell f_{\alpha(j)}(\tau_j)}
						\\   	\leq&	\sum_{\substack{\alpha \in \smset{0,1}^\ell\\ \alpha \neq 0} } \prod_{j=1}^\ell \Bnorm{f_{\alpha(j)}(\tau_j)}
						   	\leq	\sum_{\substack{\alpha \in \smset{0,1}^\ell\\ \alpha \neq 0} }\ \prod_{j=1}^\ell \left(\norm{f_{\alpha(j)}}_{\ell^\infty_k} \Gr{k}(\abs{\tau_j})\right)
						\\	\leq&	\sum_{\substack{\alpha \in \smset{0,1}^\ell\\ \alpha \neq 0} } \left(\frac{\epsilon}{2}\right)^{\abs{\alpha^{-1}(1)}} \left(\frac{1}{2}\right)^{\abs{\alpha^{-1}(0)}} \prod_{j=1}^\ell  \Gr{k}(\abs{\tau_j})
							\stackrel{\textup{\ref{axiom_H_Multi}}}{\leq}	\sum_{\substack{\alpha \in \smset{0,1}^\ell\\ \alpha \neq 0} } \underbrace{\epsilon^{\abs{\alpha^{-1}(1)}}}_{\leq\epsilon}\cdot  \left(\frac{1}{2}\right)^\ell \Gr{k}(\abs{w})
						\\	=& 	(2^\ell-1) \epsilon \left(\frac{1}{2}\right)^\ell \Gr{k}(\abs{w})
						   	< 	\epsilon \cdot\Gr{k}(\abs{w}).
  \end{align*}

  \textbf{Step 3: $\Psi_k$ is $\C$-analytic. }
  For each $w\in M$, define the operator
  \[
   \func{\pi_w}{\ellInfty{k}{M}{B}}{B}{h}{h(w).} 
  \]
  Now $\pi_w$ is continuous by Proposition \ref{prop_properties_of_ellInftyLimit}(a) and the family
  $
   \Lambda := \setm{\phi\circ \pi_w}{\phi \in B' ; w\in M}
  $
  separates the points of $\ellInfty{k}{M}{B}$ as the topological dual $B'$ separates the points of $B$ by the Hahn-Banach Theorem. 
  Hence the continuous operator $\Psi_k$ will be $\C$-analytic by Proposition \ref{prop: avs:weaka} if $\pi_w \circ \Psi_k$ is $\C$-analytic. 
  If $w=\tau_1\cdots \tau_\ell \in M$, the map 
  \[
   \func{\pi_w\circ \Psi_k}{\ellInfty{k}{\Sigma}{B}}{B}{f}{f(\tau_1)\cdots f(\tau_\ell)}
  \]
  is a continuous $\ell$-homogeneous polynomial (in $f$), whence $\C$-analytic (cf.\ \cite{BS71b}). 
 \end{proof}

 We will use this proposition now to prove the analogue result in the (LB)-space case:
 \begin{prop}												\label{prop_extension_LB}
  Let $(\Sigma,\abs{\cdot})$ be a pure graded index set and let $M=\FreeMonoid{\Sigma}$ or $M=\FreeCommMonoid{\Sigma}$.
  Then every controlled function $f\in\ellInftyLimit{\Sigma}{B}$ has a unique extension to a controlled character $\chi_f\in\controlledChar{M}{B}\subseteq\ellInftyLimit{M}{B}$ and the (nonlinear) extension operator
  \[
   \func{\Psi}{\ellInftyLimit{\Sigma}{B}}{\ellInftyLimit{M}{B}}{f}{\chi_f}
  \]
  is of class $C^\omega_\K$, i.e.~it is $\K$-analytic.
 \end{prop}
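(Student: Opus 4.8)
The plan is to reduce the (LB)-space statement to the Banach-space statement (Proposition \ref{prop_extension_Banach}) already established, using the structural properties of the spaces $\ellInftyLimit{\Sigma}{B}$ and $\ellInftyLimit{M}{B}$ recorded in Proposition \ref{prop_properties_of_ellInftyLimit}. The key observation is that since $(\Sigma, \abs{\cdot})$ is pure, part (c) of that proposition gives $\ellInftyLimit{\Sigma}{B} = \bigcup_{k\in\N} \oBallin{1/2}{\ellInfty{k}{\Sigma}{B}}{0}$, so the domain of $\Psi$ is covered by the domains of the Banach-space extension operators $\Psi_k$ from Proposition \ref{prop_extension_Banach}.

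First I would address existence and uniqueness of the extension. Given $f \in \ellInftyLimit{\Sigma}{B}$, by purity and Proposition \ref{prop_properties_of_ellInftyLimit}(c) there is $k$ with $f \in \oBallin{1/2}{\ellInfty{k}{\Sigma}{B}}{0}$; applying Proposition \ref{prop_extension_Banach} yields a $k$-controlled character $\chi_f = \Psi_k(f) \in \ellInfty{k}{M}{B} \subseteq \ellInftyLimit{M}{B}$ extending $f$. Uniqueness is immediate since any character extending $f$ is determined by its values on the generating set $\Sigma$ via multiplicativity (the universal property of $\FreeMonoid{\Sigma}$ resp.\ $\FreeCommMonoid{\Sigma}$). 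One must check the definition of $\Psi$ is independent of the choice of $k$: if $f$ lies in two such balls for $k \leq k'$, the two extensions agree because the inclusion $\ellInfty{k}{M}{B} \hookrightarrow \ellInfty{k'}{M}{B}$ is the identity on underlying functions and both are the unique multiplicative extension. This shows $\Psi$ is a well-defined map, and that it restricts on each $\oBallin{1/2}{\ellInfty{k}{\Sigma}{B}}{0}$ to $\Psi_k$ followed by the continuous inclusion $\ellInfty{k}{M}{B} \hookrightarrow \ellInftyLimit{M}{B}$.

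The analyticity is where the real work lies. The difficulty is that $\ellInftyLimit{\Sigma}{B} = \lim_{\longrightarrow} \ellInfty{k}{\Sigma}{B}$ is a direct limit, and a map out of a direct limit being analytic is \emph{not} automatic from being analytic on each step — the step spaces $\ellInfty{k}{\Sigma}{B}$ are not open subsets of the limit, and $C^\omega$ is not simply characterised by restriction to a cofinal family of subsets in general locally convex direct limits. The standard tool here is that for an (LB)-space with appropriate regularity (Silva-type / compact regularity, which by Proposition \ref{prop_properties_of_ellInftyLimit}(d) holds when the growth family is convex — as we have standing in this section), locally convex direct limits of Banach spaces have the property that a map is $C^\omega_\K$ (resp.\ smooth) if and only if its restriction to each step is. I would invoke the relevant result on mappings on inductive limits of Banach spaces (of the type in \cite{hg2002a} / \cite{neeb2006}, or the Silva-space gluing lemma); concretely, since the domain is covered by the \emph{open} sets $\oBallin{1/2}{\ellInfty{k}{\Sigma}{B}}{0}$ of the step spaces and these form a cofinal system whose images are open in the limit topology (using again that $\Sigma$ is pure, via (c)), analyticity is a local question that can be decided on these Banach-space charts. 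On each such chart $\Psi$ equals $\iota_k \circ \Psi_k$ with $\Psi_k$ of class $C^\omega_\C$ (hence $C^\omega_\K$ after the complexification remark preceding Proposition \ref{prop_extension_Banach}, in the real case) and $\iota_k$ continuous linear, hence analytic; therefore $\Psi$ is $C^\omega_\K$.

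The main obstacle I expect is making the gluing step rigorous: one needs that the sets $\oBallin{1/2}{\ellInfty{k}{\Sigma}{B}}{0}$ are open in $\ellInftyLimit{\Sigma}{B}$ (not merely in the step spaces) and that they cover the limit, and then that $C^\omega_\K$ is a local property compatible with this open cover — the first two points follow from Proposition \ref{prop_properties_of_ellInftyLimit}(c) together with the definition of the direct limit topology, but the last point is precisely where one must cite the appropriate theorem about analytic maps on (regular) (LB)-spaces rather than prove it from scratch. If such a black box is unavailable in the exact form needed, the fallback is to verify analyticity directly via a separating family of functionals as in Step 3 of Proposition \ref{prop_extension_Banach}: the maps $\phi \circ \pi_w \colon \ellInftyLimit{M}{B} \to \K$ (with $\pi_w$ continuous by Proposition \ref{prop_properties_of_ellInftyLimit}(a)) separate points, and $\pi_w \circ \Psi$ is the continuous $\ell$-homogeneous polynomial $f \mapsto f(\tau_1)\cdots f(\tau_\ell)$ on $\ellInftyLimit{\Sigma}{B}$, hence $\K$-analytic; combined with continuity of $\Psi$ (which again reduces to continuity of each $\Psi_k$ plus the cofinal open cover) this gives analyticity by the weak-analyticity criterion (Proposition \ref{prop: avs:weaka}).
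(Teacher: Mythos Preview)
Your reduction to the Banach-space extension operators $\Psi_k$ via Proposition~\ref{prop_properties_of_ellInftyLimit}(c) is correct, and the existence/uniqueness part is fine. The gap is in the analyticity step: you claim that the balls $\oBallin{1/2}{\ellInfty{k}{\Sigma}{B}}{0}$ are open in $\ellInftyLimit{\Sigma}{B}$, but this is false. Each such ball is bounded in the step $\ellInfty{k}{\Sigma}{B}$, hence bounded in the (LB)-limit, and a nontrivial (LB)-space is not normable, so no nonempty bounded set is open. Consequently there is no open cover by these balls, and the locality of $C^\omega_\K$ cannot be invoked as you describe. Your fallback argument inherits the same defect: you reduce continuity of $\Psi$ to ``continuity of each $\Psi_k$ plus the cofinal open cover'', which again does not exist.

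The paper's proof circumvents this entirely. For $\K=\C$ it invokes \cite[Theorem A]{dahmen2010}, a black box tailored precisely to this situation: a map on an (LB)-space that is the directed union of balls of a fixed radius is complex analytic provided it is complex analytic \emph{and bounded} on each such ball. The boundedness is the crucial input you are missing --- Proposition~\ref{prop_extension_Banach} gives $\norm{\Psi_k(f)}_{\ell^\infty_k}\leq 1$, and it is this uniform bound (not any openness) that makes the gluing work. For $\K=\R$ the paper does not appeal to a real version of Proposition~\ref{prop_extension_Banach} at the Banach level; instead it complexifies $B$ to $B_\C$, observes (via \cite[Theorem~3.4]{MR1878717}) that $\ellInftyLimit{\Sigma}{B_\C}=\ellInftyLimit{\Sigma}{B}\oplus i\,\ellInftyLimit{\Sigma}{B}$ so that the real space sits correctly inside the complex one, applies the complex result to $\Psi_\C$, and then restricts domain and range to obtain real analyticity of $\Psi$ by Definition~\ref{defn: real_analytic}.
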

 \begin{proof}
  Since $\Sigma$ is pure, we may apply part (c) of Proposition \ref{prop_properties_of_ellInftyLimit}. 
  Hence, the locally convex space $\ellInftyLimit{\Sigma}{B}$ is the union of the balls $\oBallin{1/2}{\ellInfty{k}{\Sigma}{B}}{0}$.
  So, every controlled function is $k$-controlled with norm $<1/2$ for a large enough $k \in \N$. 
  In particular, we can deduce from Proposition \ref{prop_extension_Banach} that every controlled function admits an extension which is also controlled, whence the extension operator makes sense.
  
  If $\K=\C$, the analyticity is a direct application of \cite[Theorem A]{dahmen2010} which ensures the complex analyticity of a function defined on the directed union of balls of the same radius (in this case $1/2$) if the function is complex analytic and bounded on each step which is ensured in this case by Proposition \ref{prop_extension_Banach}.

  For $\K=\R$ we consider the complexification $B_\C$ of $B$. Now $B_\C$ is a complex Banach algebra which contains $B$ as a closed real subalgebra whose norm is induced by the norm on $B_\C$ (see \cite[Section 2]{BS71a} and \cite{MR1688213}).
  For each $k\in\N$ the complex Banach space $\ellInfty{k}{\Sigma}{B_\C}$
   decomposes into a direct sum of two copies of the closed real spaces $\ellInfty{k}{\Sigma}{B}$. And since finite direct sums and direct limits commute \cite[Theorem 3.4]{MR1878717}, we conclude $\ellInftyLimit{\Sigma}{B_\C}$ is the complexification of $\ellInftyLimit{\Sigma}{B}$ and induces the correct topology on the closed real subspace $\ellInftyLimit{\Sigma}{B}$.
  
  Now the complex extension operator
  $
   \smfunc{\Psi_\C}{\ellInftyLimit{\Sigma}{B_\C}}{\ellInftyLimit{M}{B_\C}}
  $ 
  is complex analytic and restricting to the real subspaces we obtain the real operator $\smfunc{\Psi}{\ellInftyLimit{\Sigma}{B}}{\ellInftyLimit{M}{B}}$ 
  which is therefore real analytic (see Definition \ref{defn: real_analytic}).
\end{proof}

 \begin{prop}[Controlled characters form a submanifold]							\label{prop_characters_form_submanifold}
  Let $(\Sigma,\abs{\cdot})$ be a graded index set and let $M=\FreeMonoid{\Sigma}$ or $M=\FreeCommMonoid{\Sigma}$.
  Assume that $(\Sigma, \abs{\cdot})$ is pure, i.e.~that $M$ is connected.
  Then the set $ \controlledChar{M}{B}$ of controlled characters
  is a $C^\omega_\K$-submanifold of the locally convex space 
  $\ellInftyLimit{M}{B}$
  of all controlled functions on the monoid $M$.
  A global chart for $ \controlledChar{M}{B}$ is the $C^\omega_\K$-diffeomorphism
  \[
   \func{\res}{\controlledChar{M}{B}}{\ellInftyLimit{\Sigma}{B}}{\chi}{\chi|_\Sigma}.
  \]
 \end{prop}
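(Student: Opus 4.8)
The plan is to show that $\controlledChar{M}{B}$ is the image of a section of a (analytic) retraction, or more directly, to exhibit $\res$ as a bijective $C^\omega_\K$-diffeomorphism onto the modelling space $\ellInftyLimit{\Sigma}{B}$, which immediately makes $\controlledChar{M}{B}$ a (trivially globally charted) submanifold. The key point is that $\res$ and the extension operator $\Psi$ from Proposition \ref{prop_extension_LB} are mutually inverse as maps between $\controlledChar{M}{B}$ and $\ellInftyLimit{\Sigma}{B}$.

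First I would record that $\res$ does land in $\ellInftyLimit{\Sigma}{B}$: if $\chi \in \controlledChar{M}{B}$ then $\chi$ is $k$-controlled on $M$ for some $k$, hence $\chi|_\Sigma$ is $k$-controlled on $\Sigma$ (the supremum over $\Sigma$ is bounded by the supremum over $M$), so $\chi|_\Sigma \in \ellInfty{k}{\Sigma}{B} \subseteq \ellInftyLimit{\Sigma}{B}$. Second, I would show $\res$ is a bijection with inverse $\Psi$. Injectivity: since $M$ is generated by $\Sigma$ and a character is a monoid homomorphism, $\chi$ is determined by $\chi|_\Sigma$; this is exactly the uniqueness clause already used in Proposition \ref{prop_extension_LB}. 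Surjectivity: given $f \in \ellInftyLimit{\Sigma}{B}$, Proposition \ref{prop_extension_LB} provides $\chi_f = \Psi(f) \in \controlledChar{M}{B}$ with $\chi_f|_\Sigma = f$, i.e.\ $\res(\Psi(f)) = f$; and $\Psi(\res(\chi)) = \chi$ by the uniqueness of the extension. So $\res$ and $\Psi$ are inverse bijections.

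Third, I would upgrade this to a statement about smooth structures. To even speak of $\res$ being a diffeomorphism, one views $\controlledChar{M}{B}$ with the subspace topology inherited from $\ellInftyLimit{M}{B}$; the continuous linear inclusion $\ellInftyLimit{M}{B} \hookrightarrow B^M$ (Proposition \ref{prop_properties_of_ellInftyLimit}(a)) shows restriction to $\Sigma$ is continuous on all of $\ellInftyLimit{M}{B}$, and $\Psi$ is continuous (indeed $C^\omega_\K$) by Proposition \ref{prop_extension_LB}, so $\res$ is a homeomorphism onto $\ellInftyLimit{\Sigma}{B}$. Now define the candidate global chart as $\res$ itself; its inverse is $\Psi$. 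To conclude $\controlledChar{M}{B}$ is a $C^\omega_\K$-submanifold I would exhibit a submanifold chart of the ambient space: consider the analytic map $\ellInftyLimit{M}{B} \to \ellInftyLimit{M}{B}$, $h \mapsto h - \Psi(h|_\Sigma)$, or better, the analytic retraction $r = \Psi \circ (\text{restriction to }\Sigma)$ onto $\controlledChar{M}{B}$ (analytic as a composite of the continuous linear restriction map and $\Psi$), which satisfies $r \circ r = r$ and has image exactly $\controlledChar{M}{B}$. A standard argument (splitting the ambient (LB)-space as $\ker(\text{restriction}) \oplus$ complement, or directly using that $\im r$ of an analytic idempotent is a submanifold) then yields the submanifold property, with $\res$ as the promised global chart.

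The main obstacle I expect is the bookkeeping around topologies and the submanifold definition in the locally convex / (LB)-space setting: one must be careful that the subspace topology on $\controlledChar{M}{B}$ agrees with the topology transported from $\ellInftyLimit{\Sigma}{B}$ via $\res$, and that "submanifold" is witnessed by a chart of the ambient space carrying $\controlledChar{M}{B}$ onto a complemented closed subspace. Since $\Psi$ is only a nonlinear (albeit analytic) map, the cleanest route is the analytic-idempotent/retraction argument: $\controlledChar{M}{B}$ is the fixed-point set of the analytic idempotent $r$, and such fixed-point sets are split analytic submanifolds (the differential $T_\chi r$ is a continuous idempotent on the model space, whose image and kernel split it). Everything else — well-definedness of $\res$, the inverse relations with $\Psi$, continuity — is routine given the earlier propositions. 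I would therefore allocate the bulk of the writeup to the idempotent-splitting step and merely cite Propositions \ref{prop_properties_of_ellInftyLimit} and \ref{prop_extension_LB} for the rest.
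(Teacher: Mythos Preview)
Your proposal is correct and, once unwound, coincides with the paper's argument. The paper splits the ambient space as $\ellInftyLimit{M}{B}\cong\ellInftyLimit{\Sigma}{B}\times\ellInftyLimit{\Ind}{B}$ (with $\Ind:=M\setminus\Sigma$) via the continuous linear restriction map, identifies $\controlledChar{M}{B}$ with the graph of the analytic map $f\mapsto(\pi_\Ind\circ\Psi)(f)$, and then invokes the standard fact that the graph of an analytic map is a closed split analytic submanifold diffeomorphic to its domain. Your retraction $r=\Psi\circ\res$ has precisely this graph as its image in these coordinates, so the idempotent-splitting step you flag as the main obstacle is in fact trivial here: no abstract fixed-point-set theorem or local inverse function theorem is needed, only the shear map $(f,g)\mapsto(f,g-\pi_\Ind\Psi(f))$, which is analytic with analytic inverse by inspection. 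The paper's route is thus slightly more economical, bypassing the idempotent language entirely and going straight to the graph description.
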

 \begin{proof}
  Let $\Ind:=M\setminus\Sigma$ the set of all words with length different from $1$.
  Then by \cite[Theorem 3.4]{MR1878717} we have an isomorphism
  $
   \ellInftyLimit{M}{B} \cong \ellInftyLimit{\Sigma}{B} \times  \ellInftyLimit{\Ind}{B}
  $
  of locally convex spaces.
  Thus 
  $
   \func{\Psi}{\ellInftyLimit{\Sigma}{B}}{\controlledChar{M}{B}\subseteq \ellInftyLimit{\Sigma}{B}}{f}{\chi_f}
  $
  is $C^\omega_\K$ by Proposition \ref{prop_extension_LB}.
  Now, consider the projection map
  \[
   \func{\pi_\Ind}{\ellInftyLimit{M}{B}}{\ellInftyLimit{\Ind}{B}}{h}{h|_\Ind}
  \]
  which is continuous linear and hence the composition
  $
   \smfunc{\pi_\Ind\circ\Phi}{\ellInftyLimit{\Sigma}{B}}{\ellInftyLimit{\Ind}{B}}
  $
  is $C^\omega_\K$ as well  which implies that the graph of $\pi_\Ind\circ \Psi$ is a closed split submanifold of $\ellInftyLimit{\Sigma}{B}\times\ellInftyLimit{\Ind}{B}$ which is $C^\omega_\K$-diffeomorphic to the domain.
  Using the above identification, this graph becomes the set $\controlledChar{M}{B}$.  As the graph of an analytic function is an analytic submanifold (the argument in the setting of locally convex manifolds coincides with the standard argument for finite-dimensional manifolds, cf.\ \cite[Example 1.30]{MR2954043}), the assertion follows. 
 \end{proof}

 \section{Lie groups of controlled characters on Hopf algebras}										\label{section_controlled_Hopf}
   \newcommand{\CoMultH}{\Delta_\Hopf}
   \newcommand{\CoUnitH}{\epsilon_\Hopf}
   \newcommand{\AntipodeH}{S_\Hopf}
 In this section we construct the Lie group structure for groups of controlled characters of suitable Hopf algebras.
 To this end, we have to restrict our choice of Hopf algebras.
 
 \begin{defn}
  Consider a graded and connected Hopf algebra $\Hopf$ together with $\Sigma \subseteq \Hopf$. We call $(\Hopf, \Sigma)$ a \emph{combinatorial Hopf algebra} if $\Hopf$ is, as an associative algebra, isomorphic to a polynomial algebra $\K [\Sigma]$ or $\K \langle \Sigma \rangle$. 
 \end{defn}

 Observe that the isomorphism realising a combinatorial Hopf algebra as a polynomial algebra with generating set $\Sigma$ is part of the structure of the combinatorial Hopf algebra.
 Recall that any \emph{commutative} graded and connected Hopf algebra $\Hopf$ is isomorphic to a polynomial algebra $\K[\Sigma]$ by a suitable version of the Milnor--Moore theorem (see \cite[Theorem 3.8.3]{MR2290769}).
The term ``combinatorial Hopf algebra'' refers to the r\^{o}le of $\Sigma$ in our main examples: Usually $\Sigma$ will be a set of combinatorial objects (trees, partitions, permutations, etc.) and the Hopf algebra will encode combinatorial information.
 The estimates considered in the following are always understood with respect to the basis of the polynomial algebra $\K[\Sigma]$ (or $\K \langle \Sigma\rangle$). 
 
  
  \begin{ex}[{The shuffle Hopf algebra \cite{MR1231799}}]\label{ex: shuffle}
  For an alphabet $\mathcal{A} \neq \emptyset$ we consider the word monoid $\mathcal{A}^*$  and the algebra $\K \langle \mathcal{A}\rangle$.
  Fix $\rho \colon \mathcal{A} \rightarrow \N$ to induce a grading
  \begin{displaymath}
   | \cdot | \colon \K \langle \mathcal{A}\rangle \rightarrow \N,\quad  a_1 \ldots a_n \mapsto \sum_{i=1}^n \rho(a_i)
  \end{displaymath}
  Let $a,b$ be letters and $u,w$ be words. We define the \emph{shuffle product} recursively by
    \begin{displaymath}
    1 \shuffle w  = w \shuffle 1 = w, \quad  (au) \shuffle (bw) := a(u\shuffle (bw)) + b((au) \shuffle w)
    \end{displaymath}
  The (bilinear extensions of the) shuffle product and the deconcatenation of words
    \begin{displaymath}
     \Delta (a_1 \cdot a_2 \cdot \ldots \cdot a_n) = \sum_{i=0}^n a_1 \cdot a_2 \cdot \ldots \cdot a_i \otimes a_{i+1} \cdot a_{i+2} \cdot \ldots \cdot a_n 
    \end{displaymath}
 turns $\K \langle \mathcal{A}\rangle$ into a graded and connected $\K$-bialgebra (i.e.\ a Hopf algebra).
 However, an explicit formula for the antipode is available as $$S (a_1 a_2 \ldots a_n) = (-1)^n a_n a_{n-1} \ldots a_1, \quad \forall a_1,\ldots a_n \in \Sigma.$$
 We call the resulting commutative Hopf algebra $\text{Sh}_\K (\mathcal{A}, \rho)$ the \emph{shuffle Hopf algebra}.
 The shuffle Hopf algebra usually appearing in the literature (e.g.\ \cite{MR1231799}) is constructed with respect to $\rho \equiv 1$.
 
 Now assume that there is a total order ``$\leq$'' on the alphabet $\mathcal{A}$ and order words in $\mathcal{A}^*$ by the induced lexicographical ordering.
 Then one defines the ``Lyndon words'' as those words $w \in \mathcal{A}^*$ which satisfy that for every non trivial splitting $w=uv$ the condition $u< v$.
 Radford's theorem (cf.\ \cite[Section 6]{MR1231799} or \cite{MR1747062}) shows that the shuffle Hopf algebra is isomorphic to the polynomial algebra $\K [\Sigma]$, where $\Sigma$ is the set of Lyndon words in $\mathcal{A}^*$.
 Summing up, $(\text{Sh}_\K (\mathcal{A}, \rho), \Sigma)$ is a combinatorial Hopf algebra.

 Recall that shuffle Hopf algebras appear in diverse applications connected to numerical analysis, see e.g.\ \cite{MR2407032,MR3648103,MR3485151,MR2790315}.
 Also they appear in the work of Fliess in control theory (we refer to \cite[Section 6, Example (5)]{Oberst} for an account).
 Furthermore, there are generalisations of the shuffle Hopf algebra, e.g.\ the Quasi-shuffle Hopf algebras \cite{MR1747062}, which are also combinatorial Hopf algebras.
\end{ex}
 

 \begin{defn}									\label{defn_control_pair}			
  Let $(\Hopf, \Sigma)$ be a combinatorial Hopf algebra and $(\Gr{n})_{n\in \N}$ a growth family. 
  We call $((\Hopf, \Sigma), (\Gr{n})_{n\in \N})$ \emph{control pair} if $\smfunc{\CoMultH}{\Hopf}{\Hopf \otimes \Hopf}$ and $\smfunc{\AntipodeH}{\Hopf}{\Hopf}$ are $\ell^1_\leftarrow$-continuous with respect to the growth family $(\Gr{n})_{n\in \N}$ (see Lemma \ref{lem_continuity_of_linear_maps}).\footnote{In order for this to make sense, we use the identifications $\Hopf = \K^{\left(M\right)}$ and $\Hopf\otimes\Hopf = \K^{(M\times M)}$, where $M=\FreeCommMonoid{\Sigma}$ or $M=\FreeMonoid{\Sigma}$, respectively.}
 \end{defn}
 
In general $\ell^1_\leftarrow$-continuity of the multiplication with respect to a given growth family does not seem to be automatic (due to the inequality in \ref{axiom_H_Multi}). However, continuity of the multiplication is not needed in our approach. 
Finally, we address why well known recursion formulae for the antipode are in general not sufficient to establish $\ell^1_{\leftarrow}$-continuity of the antipode. 
 
 \begin{rem}\label{rem: recursion}
  For a graded and connected Hopf algebra $\Hopf$, the antipode can be compute recursively (cf.\ \cite[Corollary 5]{MR2523455}) as 
  \begin{align}
   S(x) &= -x - \sum_{(x)} S(x') x'' \label{rec:1}
        = -x - \sum_{(x)} x'S(x''). 
  \end{align}
 Naively one may hope to derive from this formula the $\ell^1$-continuity (with respect to the growth families from Proposition \ref{prop: growth:fam}) if one can establish the $\ell^1$-continuity of the coproduct first.
 However, this fails even in easy cases, where the $\ell^1$-continuity of the coproduct is almost trivial. 
 For example for the combinatorial Hopf algebra \cite{MR3223292} of (decorated) graphs, the antipode is given by  
  \begin{displaymath}
   S (\Gamma, D) = - (\Gamma, D) - \sum_{\emptyset \subsetneq (\Lambda, F) \subsetneq (\Gamma, D)} S (\Lambda, F) \otimes (\Gamma /\Lambda, D/F)).
  \end{displaymath}
  Here we hide all technicalities (e.g.\ on the graphs and the decorations). For us, the crucial observation is that all coefficients occurring are $\pm 1$ and thus it suffices to count summands in the recursion. 
  As $\Lambda$ is a subgraph of $\Gamma$ and the grading is by number of edges. Assume that $\Gamma$ has $n$ edges, then in the $k$th step of the recursion at most $2^{n-k}$ summands. 
  However, iterating this, we obtain the (very rough) estimate that in a worst case one has to account for $2^n2^{n-1} \cdots 2^{1} \leq 2^{\frac{n^2+n}{2}}$ summands. 
  In total, we thus obtain super exponential growth, whence we can not hope to obtain a control pair if we choose for example exponential growth.
  Summing up, even if the coproduct is very easy, the recursion formulae \eqref{rec:1} can not be used to derive $\ell^1$-continuity for $S$. 
  
  The deeper reason why the recursion formula is unsuitable for our purposes is that it contains in general an enormous amount of terms which cancel each other. 
  Thus a cancellation free, non-recursive formula would be desirable.
  In general however, it is a difficult combinatorial problem to construct such a formula. This problem has been dubbed the \emph{antipode problem} \cite[Section 5.4]{MR3077234} (also cf.\ \cite{MR3091063}).
  Recently a cancellation free formula, the so called preLie forest formula \cite[Theorem 8]{MP15} has been constructed for the so called \emph{right-handed Hopf algebras}, cf.\ \ref{setup:RHHopf}.
  Note that the Hopf algebras considered in the present paper are often of this type (e.g.\ the Connes-Kreimer algebra of rooted trees \ref{ex: CKHopf} and the Fa\`{a} di Bruno algebra (Example \ref{ex_faadiBruno}) are right-handed Hopf algebras). 
  Unfortunately, we were not able to establish the $\ell^1$-continuity of the antipode from the preLie forest formula in this general setting.
 \end{rem} 
 
 \begin{setup}\label{setup: niceHopf}
  Throughout this section, assume that $B$ is a commutative Banach algebra, $\left(	\Gr{k}	\right)_{k\in\N}$ is a fixed convex growth family, $(\Hopf, \Sigma)$ a combinatorial Hopf algebra.
  In addition, we assume that $((\Hopf, \Sigma), \left(	\Gr{k}	\right)_{k\in\N})$ is a control pair.
  As a vector space, $\Hopf$ is hence isomorphic to $\K^{(M)}$, where $M=\FreeCommMonoid{\Sigma}$ or $M=\FreeMonoid{\Sigma}$, respectively.
  
  A $\K$-linear map $\smfunc{\phi}{\Hopf}{B}$ is \emph{controlled} 
   if its restriction to $M$ is controlled (in the sense of \ref{setup_controlled_maps}), i.e.~if $\phi|_M \in \ellInftyLimit{M}{B}$. 
  By slight abuse of notation, we will denote the space of all controlled linear maps by $\ellInftyLimit{\Hopf}{B}$.
 \end{setup}

We briefly digress here to define a category of combinatorial Hopf algebras suitable for our purpose (meaning that the morphisms in this category mesh well with the notion of controlled functions). 

  \begin{setup}[The category of combinatorial Hopf algebras]
 A Hopf algebra morphism between combinatorial Hopf algebras $F \colon (\Hopf, \Sigma) \rightarrow (\widetilde{\Hopf}, \widetilde{\Sigma})$ will be called \emph{morphism of combinatorial Hopf algebras} if 
 \begin{itemize}
 \item it is of degree $0$, i.e.\ $F(\Hopf_n) \subseteq \widetilde{\Hopf}_n, \forall n\in \N_0$,
 \item there is $C>0$ constant, such that for every $\tau \in \Sigma$ with $F (\tau) = \sum_{\sigma \in \widetilde{\Sigma}} c_\sigma^\tau \sigma$ the estimate $\sum_{\sigma \in \widetilde{\sigma}} |c_\sigma^\tau| < C^{|\tau|}$ holds. 
 \end{itemize} 
 Combinatorial Hopf algebras and their morphisms form a category $\CoHopf$. The definition of morphism is geared towards preserving the controlled morphisms, i.e.\ if $((\Hopf, \Sigma), \left(	\Gr{k}	\right)_{k\in\N})$ and $((\widetilde{\Hopf}, \widetilde{\Sigma}), \left(	\Gr{k}	\right)_{k\in\N})$ are control pairs and $F \colon (\Hopf, \Sigma) \rightarrow (\widetilde{\Hopf}, \widetilde{\Sigma})$ is a morphim of combinatorial Hopf algebras, then for every controlled $\phi \colon \widetilde{\Hopf} \rightarrow B$ also $\phi \circ F$ is controlled. To see this, assume that $\varphi$ is in $\ell^\infty_k(\Hopf,B)$, then
 \begin{equation}\label{estimate}
 \lVert \phi \circ F\rVert_{\ell^\infty_k} = \sup_{\tau \in \Sigma} \frac{\lVert \phi (\sum_{\sigma \in \widetilde{\sigma}} c_\sigma^\tau \sigma) \rVert_B}{\omega_k (|\sigma|)} \leq \sup_{\tau \in \Sigma} \sum_{\sigma \in \widetilde{\Sigma}} |c^\tau_\sigma|  \frac{\lVert\phi (\sigma)\rVert_B}{\omega_k (|\tau|)} \leq \sup_{\tau \in \Sigma} C^{|\tau|} \lVert \varphi\rVert_{\ell^\infty_k}.\end{equation}
 The last inequality is due to $|\sigma|=|\tau|$ since $F$ is of degree $0$. Applying \ref{axiom_H_Infinity} we see that by replacing $k$ with some larger $K$, the right hand side of \eqref{estimate} is bounded, whence $\phi \circ F$ is controlled.
  
 Recently \cite[3.2]{CEMM18} proposed another definition for a category of combinatorial Hopf algebras. In general both categories are incomparable as the notion of combinatorial Hopf algebras are incomparable (though a key idea in both approaches is to single out a specific generating set). Note that neither the category in \cite[3.2]{CEMM18} nor our category of combinatorial Hopf algebras is a subcategory of the category of connected and graded Hopf algebras (as we will discuss in Section \ref{sect: examples} Hopf algebras which carry different combinatorial structures).  
 \end{setup}

 \begin{defn}							\label{def: character}
  A linear map $\smfunc{\phi}{\Hopf}{\lcB}$ is called 
  \begin{itemize}
   \item ($\lcB$-valued) \emph{character} if it is a homomorphism of unital algebras, i.e.
  \begin{equation}\label{eq char:char}
   \phi(a b) = \phi(a)\phi(b)\text{ for all } a,b \in \Hopf \text{ and } \phi(1_\Hopf)=1_\lcB.
  \end{equation}
  Equivalently, $\phi$ is a character of the underlying monoid $(\Hopf , m_\Hopf)$.
  \item  \emph{infinitesimal character} if
  \begin{equation}\label{eq: InfChar:char}
   \phi\circ m_\Hopf = m_\lcB\circ (\phi\otimes \epsilon_\Hopf + \epsilon_\Hopf \otimes \phi),
  \end{equation}
  which means for $a,b \in \Hopf$ that $\phi(a b)=\phi(a) \epsilon_\Hopf(b) + \epsilon_\Hopf(a) \phi(b)$.
  \end{itemize}
 The set of characters is denoted by $\Char{\Hopf}{\lcB}$, while $\InfChar{\Hopf}{\lcB}$ denotes the set of all infinitesimal characters.
 Further, we let $\controlledChar{\Hopf}{B} = \Char{\Hopf}{\lcB} \cap \ellInftyLimit{\Hopf}{B}$ be the set of controlled Hopf algebra characters and $\controlledInfChar{\Hopf}{\lcB} = \InfChar{\Hopf}{\lcB} \cap  \ellInftyLimit{\Hopf}{B}$ be the set of controlled infinitesimal characters.
 \end{defn}

 \begin{defn}[Convolution of controlled linear maps]
  Let $\phi,\psi\in \Hom(\Hopf,B)$ be two linear maps.
  We define the \emph{convolution} of the linear maps $\phi$ and $\psi$ to be
  \[
   \phi\star\psi := m_B \circ (\phi\otimes\psi)\circ \CoMultH.
  \]
  Here $\smfunc{m_B}{B\otimes B}{B}$ is the linear map induced by the algebra multiplication.

  The space $\Hom(\Hopf,B)$ of all linear maps from $\Hopf$ to $B$ is an associative unital algebra with the convolution product and the neutral element $\one \coloneq 1_B \cdot \CoUnitH$. 
 \end{defn}

 It is well known that the convolution turns the set of characters $\Char{\Hopf}{B}$ into a group (see e.g.\ \cite[Section 2]{BDS16}). 
 The definition of a growth family is geared towards turning the controlled functions into a subgroup of all characters. 
 To emphasise this point, we now return to the family of functions $(\Gr{k} (n) := e^{-\frac{n}{k}})_{k\in \N}$ discussed in Remark \ref{rem: nongrowthfamily}.
 For these functions the controlled characters, in general, do not form a group. 
 
  \begin{ex}\label{ex: counterex}
    Recall that the family of functions $(\Gr{k} (n) := e^{-\frac{n}{k}})_{k\in \N}$ discussed in Remark \ref{rem: nongrowthfamily} is not a growth family (it violates property \ref{axiom_H_Infinity}).
    We will now see that for this reason characters which are controlled with respect to this family of functions (which is not a growth family!) do not form a group.
    
    To this end, let $\Hopf= \K [X]$ be the polynomial algebra in one indeterminate and endow it with the coproduct defined on the generators as $\Delta (X^i) = \sum_{k=0}^i X^k \otimes X^{i-k}$ and the grading $|X^i|=i$.
    With this structure, $\Hopf$ becomes a graded, connected and commutative Hopf algebra (an antipode is given by $S(X^i)= (-1)^iX^i$ also cf.\ Remark \ref{rem: recursion}). 
    It is well known that in this case $(\Char{\K [X]}{\K} , \star) \cong (\K , +)$ as groups (see e.g.\ \cite[Section 1.4]{MR547117}).
    One easily computes that the subset of characters which are controlled with respect to the family of functions 
    $(\Gr{k} (n) := e^{-\frac{n}{k}})_{k\in \N}$ corresponds under the above isomorphism to  $\{ x \in \K \mid \abs{x} < 1\}$ which is not a subgroup of $(\K,+)$.
   \end{ex}

 \begin{prop}[Locally convex algebra of controlled maps]\label{prop: lcvx:alg}
  The space $\ellInftyLimit{\Hopf}{B}$ is a locally convex unital topological algebra with respect to convolution.
 \end{prop}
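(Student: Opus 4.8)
The plan is to verify, in turn: (i) that $\ellInftyLimit{\Hopf}{B}$ is a linear subspace of $(\Hom(\Hopf,B),\star)$ which is closed under convolution and contains the convolution unit $\one=1_B\cdot\CoUnitH$; and (ii) that $\star$ is continuous on $\ellInftyLimit{\Hopf}{B}$. Combining (i) and (ii) with the fact, already recorded in Proposition \ref{prop_properties_of_ellInftyLimit}, that $\ellInftyLimit{\Hopf}{B}$ is a Hausdorff locally convex space then gives the claim. Since $(\Hom(\Hopf,B),\star,\one)$ is already known to be a unital associative algebra, associativity and unitality of the restricted operation need no separate argument once (i) holds.

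The heart of (i) is an $\ell^1$-estimate. Recall that $\Hopf\cong\K^{(M)}$ with $M=\FreeCommMonoid{\Sigma}$ or $M=\FreeMonoid{\Sigma}$, and that a linear map $\phi\colon\Hopf\to B$ is $k$-controlled exactly when $\Bnorm{\phi(\tau)}\le\norm{\phi}_{\ell^\infty_k}\,\Gr{k}(\abs{\tau})$ for all $\tau\in M$. Fix controlled $\phi,\psi$ and $k\in\N$ with $\phi,\psi\in\ellInfty{k}{\Hopf}{B}$, and for $\tau\in M$ write $\CoMultH\tau=\sum_{\alpha,\beta\in M}d_{\alpha,\beta,\tau}\,\alpha\otimes\beta$ (a finite sum); unravelling $\star=m_B\circ(\phi\otimes\psi)\circ\CoMultH$ gives $(\phi\star\psi)(\tau)=\sum_{\alpha,\beta}d_{\alpha,\beta,\tau}\,\phi(\alpha)\psi(\beta)$. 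Then, by submultiplicativity of $\Bnorm{\cdot}$, the $k$-controlled bounds for $\phi$ and $\psi$, property \ref{axiom_H_Multi}, and the definition of the $\ell^1_k$-norm on $\K^{(M\times M)}$,
\begin{align*}
 \Bnorm{(\phi\star\psi)(\tau)} &\le \norm{\phi}_{\ell^\infty_k}\norm{\psi}_{\ell^\infty_k}\sum_{\alpha,\beta}\abs{d_{\alpha,\beta,\tau}}\,\Gr{k}(\abs{\alpha})\,\Gr{k}(\abs{\beta})\\
 &\le \norm{\phi}_{\ell^\infty_k}\norm{\psi}_{\ell^\infty_k}\,\ellOneNorm{\CoMultH\tau}{k}.
\end{align*}
As $((\Hopf,\Sigma),(\Gr{k})_{k\in\N})$ is a control pair, $\CoMultH$ is $\ell^1_\leftarrow$-continuous, so by Lemma \ref{lem_continuity_of_linear_maps} there are $k'\in\N$ and $C>0$ with $\ellOneNorm{\CoMultH\tau}{k}\le C\,\Gr{k'}(\abs{\tau})$ for all $\tau\in M$. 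Hence $\phi\star\psi\in\ellInfty{k'}{\Hopf}{B}$ with $\norm{\phi\star\psi}_{\ell^\infty_{k'}}\le C\,\norm{\phi}_{\ell^\infty_k}\norm{\psi}_{\ell^\infty_k}$; in particular $\ellInftyLimit{\Hopf}{B}$ is $\star$-closed, and $\star$ restricts, for each $k$, to a continuous bilinear map $\ellInfty{k}{\Hopf}{B}\times\ellInfty{k}{\Hopf}{B}\to\ellInfty{k'}{\Hopf}{B}$. Finally $\one\in\ellInftyLimit{\Hopf}{B}$ because, $\Hopf$ being connected, $\CoUnitH$ is supported in degree $0$: thus $\one(\emptyword)=1_B$ and $\one(\tau)=0$ for $\abs{\tau}\ge1$, so $\norm{\one}_{\ell^\infty_1}=1$ using \ref{axiom_H_monotonic_in_k}.

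For (ii), I would invoke that finite products commute with countable locally convex direct limits (cf.\ \cite[Theorem 3.4]{MR1878717}), so that $\ellInftyLimit{\Hopf}{B}\times\ellInftyLimit{\Hopf}{B}=\lim_{\substack{\longrightarrow \\ k}}\bigl(\ellInfty{k}{\Hopf}{B}\times\ellInfty{k}{\Hopf}{B}\bigr)$ as locally convex spaces. By the previous paragraph the compositions $\ellInfty{k}{\Hopf}{B}\times\ellInfty{k}{\Hopf}{B}\to\ellInfty{k'}{\Hopf}{B}\hookrightarrow\ellInftyLimit{\Hopf}{B}$ (convolution followed by the inclusion) are continuous, and they are compatible with the bonding inclusions since they are all restrictions of the one map $\star$. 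The universal property of the locally convex direct limit therefore yields that $\star\colon\ellInftyLimit{\Hopf}{B}\times\ellInftyLimit{\Hopf}{B}\to\ellInftyLimit{\Hopf}{B}$ is continuous, and together with (i) and Proposition \ref{prop_properties_of_ellInftyLimit} this exhibits $\ellInftyLimit{\Hopf}{B}$ as a locally convex unital topological algebra.

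I expect the only delicate point to be the interchange of the finite product with the countable direct limit at the level of locally convex spaces (where countability of the defining sequence is essential); the estimate in (i) is a short computation, once one observes that the grading convention on $M\times M$ makes $\ellOneNorm{\CoMultH\tau}{k}=\sum_{\alpha,\beta}\abs{d_{\alpha,\beta,\tau}}\,\Gr{k}(\abs{\alpha}+\abs{\beta})$, so that \ref{axiom_H_Multi} applies verbatim and the control-pair hypothesis supplies the rest.
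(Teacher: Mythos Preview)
Your argument follows the paper's proof essentially verbatim: the same $\ell^1$-estimate using \ref{axiom_H_Multi} and the control-pair hypothesis, the same observation that $\one$ is controlled because $\CoUnitH$ is supported in degree $0$, and the same appeal to \cite[Theorem 3.4]{MR1878717} to identify the product with the direct limit of products. The one point to tighten is your final step: the universal property of a locally convex direct limit governs continuous \emph{linear} maps out of it, whereas $\star$ is not linear on $\ellInfty{k}{\Hopf}{B}\times\ellInfty{k}{\Hopf}{B}$; the paper closes this gap by citing \cite[Corollary 2.1]{dahmen2010}, which is precisely the statement that continuity on the steps suffices for (not necessarily linear) maps on such inductive limits.
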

 \begin{proof}
  Note that the counit $\smfunc{\CoUnitH}{\Hopf}{\K}$ is automatically $\ell^1_\leftarrow$-continuous for a graded and connected Hopf algebra. 
  This follows immediately from Lemma \ref{lem_continuity_of_linear_maps} as for $\Hopf$ graded and connected 
   $\CoUnitH (x) =\begin{cases}
         r & \text{ if } x = r\one_\Hopf \in \K \one_\Hopf = \Hopf_0,\\
         0 & \text{ else.}
    \end{cases}$
  Hence the unit $\one$ is controlled, i.e.~$\one = 1_B\cdot \CoUnitH \in\ellInftyLimit{\Hopf}{B}$. 
  To prove that $\ellInftyLimit{\Hopf}{B}$ is closed under convolution, fix $k_1\in\N$ and consider $\phi,\psi\in\ellInfty{k_1}{\Hopf}{B}$.
  We construct $k_2\in\N$ such that $\phi\star\psi\in\ellInfty{k_2}{\Hopf}{B}$.
  Let $M$ denote the canonical basis of $\Hopf$, i.e.~$M=\FreeCommMonoid{\Sigma}$ or $M=\FreeMonoid{\Sigma}$ (see \ref{setup: niceHopf}).
  Since $\CoMultH$ is $\ell^1_\rightarrow$-continuous, we can choose $k_2 > k_1$ and $C>0$ with 
  \begin{equation}													\label{eq:comult}
     \ellOneNorm{\CoMultH (\tau)}{k_1}	\leq C \Gr{k_2}(|\tau|)	  \quad \forall \tau \in M.
  \end{equation}
  Let $\tau \in M$ be given. 
  Write $\Delta_\Hopf (\tau)$ as a linear combination of the basis of $\Hopf \otimes \Hopf$ 
  $
   \CoMultH(\tau) = \sum_{\mu,\sigma \in M} c_{\mu,\sigma} \mu\otimes\sigma.
  $
  and apply \eqref{eq:comult} to obtain:
  \begin{equation}													\label{eq:comult_explicitely}
   \sum_{\mu,\sigma} \abs{c_{\mu,\sigma}} \Gr{k_1}(\abs{\mu}+\abs{\sigma})  	\leq 	C \Gr{k_2}(|\tau|).
  \end{equation}
  This allows us the following computation:
  \begin{align*}
   \Bnorm{\phi\star\psi(\tau)}	  &	=	\Bnorm{ m_B\circ(\phi\otimes\psi)\circ\CoMultH (\tau)}
				   	=	\Bnorm{ \sum_{\mu,\sigma} c_{\mu,\sigma} \phi(\mu) \psi(\sigma)}
				\\&	\leq	\sum_{\mu,\sigma} \abs{c_{\mu,\sigma}} \Bnorm{   \phi(\mu)} \Bnorm{ \psi(\sigma)}
				\\&	\leq	\sum_{\mu,\sigma} \abs{c_{\mu,\sigma}} \norm{   \phi|_M}_{\ell^\infty_{k_1}}\Gr{k_1}(\abs{\mu}) \norm{   \psi|_M}_{\ell^\infty_{k_1}}\Gr{k_1}(\abs{\sigma})
				\\&\stackrel{\text{\ref{axiom_H_Multi}}}{\leq}	
						\norm{   \phi|_M}_{\ell^\infty_{k_1}}  \norm{   \psi|_M}_{\ell^\infty_{k_1}} \sum_{\mu,\sigma} \abs{c_{\mu,\sigma}} \Gr{k_1}(\abs{\mu}+\abs{\sigma})
				\\&\stackrel{\text{\eqref{eq:comult_explicitely}}}{\leq}	
						\norm{   \phi|_M}_{\ell^\infty_{k_1}}  \norm{   \psi|_M}_{\ell^\infty_{k_1}}  C \Gr{k_2}(\abs{\tau}).
  \end{align*}
 Divide both sides of the inequality by $\Gr{k_2}(\abs{\tau})$ and pass to the supremum over $\tau \in M$ to see that $\phi\star\psi$ is contained in $\ellInfty{k_2}{\Hopf}{B}$.
  By \cite[Theorem 3.4]{MR1878717} we have $\ellInftyLimit{M}{B} \times \ellInftyLimit{M}{B}= \lim_{\rightarrow} \left( \ellInfty{k}{M}{B} \times \ellInfty{k}{M}{B}\right)$ as locally convex spaces. 
  Hence the continuity of the convolution product follows from the continuity on the steps of the inductive limit (by \cite[Corollary 2.1]{dahmen2010}). 
 \end{proof}

 \begin{lem}\label{lem: char:mult}
 Consider the locally convex algebra $(\ellInftyLimit{\Hopf}{B},\star)$ (cf.\ Proposition \ref{prop: lcvx:alg}) and the associated Lie algebra $(\ellInftyLimit{\Hopf}{B},\LB{})$, where $\LB[\varphi , \psi] = \varphi \star \psi - \psi \star\varphi$ is the commutator bracket.
 \begin{enumerate}
   \item The controlled characters $\controlledChar{\Hopf}{B}$ form a closed subgroup of the unit group of the locally convex algebra .
  Inversion in this group is given by the map $\phi \mapsto \phi \circ S_\Hopf$ and the unit element is $1_\lcA\coloneq u_\lcB\circ\epsilon_\Hopf \colon \Hopf \rightarrow \lcB , x \mapsto \epsilon_\Hopf (x) 1_\lcB$.
  \item The controlled infinitesimal characters $\controlledInfChar{\Hopf}{\lcB}$ form a closed Lie subalgebra of $(\ellInftyLimit{\Hopf}{B},\LB{})$.
  \end{enumerate}
 \end{lem}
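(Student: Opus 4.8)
The strategy is to transport the well-known algebraic facts about (infinitesimal) characters of a Hopf algebra — namely that $\Char{\Hopf}{B}$ is a subgroup of the unit group of $(\Hom(\Hopf,B),\star)$ with inverse $\phi \mapsto \phi\circ S_\Hopf$, and that $\InfChar{\Hopf}{B}$ is a Lie subalgebra of $(\Hom(\Hopf,B),\LB{})$ — across the locally convex algebra $\ellInftyLimit{\Hopf}{B}$, using only that the latter is a locally convex unital subalgebra of $(\Hom(\Hopf,B),\star)$ (Proposition \ref{prop: lcvx:alg}) and that comultiplication and antipode are $\ell^1_\leftarrow$-continuous (the control pair hypothesis of \ref{setup: niceHopf}). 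Concretely, I would set $\Char_{\mathrm{ctr}} = \controlledChar{\Hopf}{B}$, $\g_{\mathrm{ctr}} = \controlledInfChar{\Hopf}{B}$, and prove the two parts as follows.

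\emph{Part (1).} First observe that $\one = 1_B\cdot\CoUnitH$ is a controlled character (it is controlled by the computation in the proof of Proposition \ref{prop: lcvx:alg}, and it is a character since $\CoUnitH$ is an algebra homomorphism). Next, closure under $\star$: if $\phi,\psi \in \controlledChar{\Hopf}{B}$, then $\phi\star\psi$ is again an algebra homomorphism (a standard computation using coassociativity and that $\Delta$, $m_B$ are algebra maps, valid purely algebraically in $\Hom(\Hopf,B)$), and it lies in $\ellInftyLimit{\Hopf}{B}$ by Proposition \ref{prop: lcvx:alg}; hence $\phi\star\psi \in \controlledChar{\Hopf}{B}$. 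For inversion, recall the defining property of the antipode $m_B\circ(\phi\otimes\phi\circ S_\Hopf)\circ\CoMultH = \one = m_B\circ(\phi\circ S_\Hopf\otimes\phi)\circ\CoMultH$, which shows $\phi\circ S_\Hopf$ is a two-sided $\star$-inverse of $\phi$ in $\Hom(\Hopf,B)$; it is a character because $S_\Hopf$ is an algebra \emph{anti}homomorphism and $B$ is commutative, so $(\phi\circ S_\Hopf)(ab) = \phi(S_\Hopf(b)S_\Hopf(a)) = \phi(S_\Hopf(b))\phi(S_\Hopf(a)) = (\phi\circ S_\Hopf)(a)(\phi\circ S_\Hopf)(b)$; and it is controlled because $S_\Hopf$ is $\ell^1_\leftarrow$-continuous, so precomposition with $S_\Hopf$ maps $\ellInftyLimit{\Hopf}{B}$ into itself (this uses Lemma \ref{lem_continuity_of_linear_maps} together with the description of controlled maps as those whose $\ell^1_k$-extension is bounded, cf.\ Definition \ref{defn: lin:kcontrolled} and Proposition \ref{prop_properties_of_ellInftyLimit}(b)). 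Thus $\controlledChar{\Hopf}{B}$ is a subgroup of the unit group $\ellInftyLimit{\Hopf}{B}^\times$. Finally, closedness: $\controlledChar{\Hopf}{B}$ is cut out inside $\ellInftyLimit{\Hopf}{B}$ by the conditions $\phi(1_\Hopf)=1_B$ and $\phi(ab)=\phi(a)\phi(b)$ for all $a,b\in\Hopf$; each of these is a closed condition because, by Proposition \ref{prop_properties_of_ellInftyLimit}(a), all point evaluations $\phi\mapsto\phi(x)$ are continuous on $\ellInftyLimit{\Hopf}{B}$, so $\controlledChar{\Hopf}{B} = \{\phi : \phi(1_\Hopf)=1_B\}\cap\bigcap_{a,b\in\Hopf}\{\phi : \phi(ab)-\phi(a)\phi(b)=0\}$ is an intersection of closed sets.

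\emph{Part (2).} The infinitesimal character condition $\phi\circ m_\Hopf = m_B\circ(\phi\otimes\CoUnitH + \CoUnitH\otimes\phi)$ is, evaluated on generators, a collection of equations $\phi(ab) = \phi(a)\CoUnitH(b) + \CoUnitH(a)\phi(b)$, each again a closed condition in $\ellInftyLimit{\Hopf}{B}$ by continuity of point evaluations; hence $\controlledInfChar{\Hopf}{B}$ is a closed linear subspace. That it is closed under the commutator bracket $\LB[\phi,\psi]=\phi\star\psi-\psi\star\phi$ is the standard algebraic fact that the commutator of two infinitesimal characters is an infinitesimal character (one checks, using coassociativity and counitality, that $\phi\star\psi + \psi\star\phi$ is a $\star$-derivation-type expression whose antisymmetrisation satisfies the Leibniz-type identity; equivalently, $\InfChar{\Hopf}{B} = \Char{\Hopf}{B}$'s ``Lie algebra'' and is known to be a Lie subalgebra of $(\Hom(\Hopf,B),\LB{})$, cf.\ \cite{BDS16}), combined with the fact that $\ellInftyLimit{\Hopf}{B}$ is closed under $\star$ (Proposition \ref{prop: lcvx:alg}), so $\LB[\phi,\psi]\in\ellInftyLimit{\Hopf}{B}\cap\InfChar{\Hopf}{B} = \controlledInfChar{\Hopf}{B}$.

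\emph{Main obstacle.} None of the steps is genuinely hard: the purely algebraic identities (character under $\star$, antipode gives the inverse, commutator of infinitesimal characters) are classical and can be cited from \cite{BDS16}, and all the topological/analytic content — that $\ellInftyLimit{\Hopf}{B}$ is a topological algebra under $\star$ containing $\one$, and that precomposition with the $\ell^1_\leftarrow$-continuous map $S_\Hopf$ preserves controlledness — has already been established in Proposition \ref{prop: lcvx:alg} and Lemma \ref{lem_continuity_of_linear_maps}. The only point requiring a little care is the verification that $\phi\circ S_\Hopf$ is controlled: one must check that $\ell^1_\leftarrow$-continuity of $S_\Hopf\colon\K^{(M)}\to\K^{(M)}$ really does imply that $\phi\mapsto\phi\circ S_\Hopf$ sends $\ellInfty{k_1}{\Hopf}{B}$ into some $\ellInfty{k_2}{\Hopf}{B}$, which follows by combining the estimate $\ellOneNorm{S_\Hopf\tau}{k_1}\le C\,\Gr{k_2}(|\tau|)$ (Lemma \ref{lem_continuity_of_linear_maps}) with the fact that a $k_1$-controlled linear map extends to the Banach space $\ellOne{k_1}{M}$ with the same norm (Definition \ref{defn: lin:kcontrolled}), so that $\Bnorm{(\phi\circ S_\Hopf)(\tau)} = \Bnorm{\phi(S_\Hopf\tau)} \le \norm{\phi}_{\ell^\infty_{k_1}}\ellOneNorm{S_\Hopf\tau}{k_1} \le C\,\norm{\phi}_{\ell^\infty_{k_1}}\,\Gr{k_2}(|\tau|)$. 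The closedness statements are then immediate from Proposition \ref{prop_properties_of_ellInftyLimit}(a).
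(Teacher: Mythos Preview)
Your proposal is correct and follows essentially the same approach as the paper's proof: cite the classical algebraic facts that characters form a group (with inverse $\phi\circ S_\Hopf$) and infinitesimal characters a Lie subalgebra, observe that $\phi\circ S_\Hopf$ remains controlled because $S_\Hopf$ is $\ell^1_\leftarrow$-continuous, and deduce closedness from Proposition~\ref{prop_properties_of_ellInftyLimit}(a). The paper phrases the closedness argument slightly more succinctly (the sets are manifestly closed for the pointwise topology on $B^M$, hence closed in the finer $\ell^\infty_\rightarrow$-topology by continuity of the inclusion), but this is exactly equivalent to your intersection-of-preimages formulation; your explicit estimate for the controlledness of $\phi\circ S_\Hopf$ simply spells out what the paper leaves implicit.
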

 \begin{proof}
  Characters form a group with respect to the convolution and pre-composition with the antipode. Similarly, the infinitesimal characters form a Lie Algebra. 
  We refer to \cite[4.3 Proposition 21 and 22)]{MR2523455} for proofs.
  For every controlled character, its inverse $\phi\circ S$ is again a controlled character since the antipode map $\smfunc{S}{\Hopf}{\Hopf}$ is $\ell^1_\leftarrow$-continuous by assumption. 
  It is clear from the definitions that $\controlledChar{\Hopf}{B}$ and $\controlledInfChar{\Hopf}{\lcB}$ are closed subsets with respect to the topology of pointwise convergence and hence Proposition \ref{prop_properties_of_ellInftyLimit}(a) implies the closedness with respect to the $\ell^\infty_\rightarrow$-topology.
 \end{proof}
   
  Now the first main result constructs the Lie group of controlled characters.
   
 \begin{thm}												\label{thm: controlled Lie group}
  Let $((\Hopf, \Sigma), (\Gr{n})_{n\in \N})$ be a control pair
  and let $B$ be a commutative Banach algebra. 
  The group of $B$-valued controlled characters $\controlledChar{\Hopf}{B}$ is an analytic Lie group modelled on the (LB)-space $\ellInftyLimit{\Sigma}{B}$.
 \end{thm}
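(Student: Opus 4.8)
The plan is to exhibit $\controlledChar{\Hopf}{B}$ as an open submanifold of the group of controlled characters of the underlying \emph{monoid} $M$ ($= \FreeCommMonoid{\Sigma}$ or $\FreeMonoid{\Sigma}$), and then to show that the convolution product and the inversion $\phi \mapsto \phi \circ S_\Hopf$ are analytic with respect to this manifold structure. First I would recall from Proposition \ref{prop_characters_form_submanifold} that $\controlledChar{M}{B}$ is a $C^\omega_\K$-submanifold of $\ellInftyLimit{M}{B}$ with the single global chart $\res \colon \chi \mapsto \chi|_\Sigma$, which identifies it $C^\omega_\K$-diffeomorphically with the (LB)-space $\ellInftyLimit{\Sigma}{B}$; in particular $\controlledChar{M}{B}$ carries a canonical analytic manifold structure modelled on $\ellInftyLimit{\Sigma}{B}$. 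Since $\controlledChar{\Hopf}{B} = \Char{\Hopf}{B} \cap \ellInftyLimit{\Hopf}{B}$ sits inside $\controlledChar{M}{B}$ (a controlled Hopf character is in particular a controlled monoid character, using $\Hopf = \K^{(M)}$ as in \ref{setup: niceHopf}), it suffices to transport this manifold structure and check the group operations.

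\textbf{Key steps.} The argument proceeds as follows.
\begin{enumerate}
 \item[(1)] Put on $\controlledChar{\Hopf}{B}$ the manifold structure it inherits as a subset of $\controlledChar{M}{B}$ via the chart $\res$. Concretely, I claim $\res$ maps $\controlledChar{\Hopf}{B}$ onto the \emph{closed} linear subspace $\VV \subseteq \ellInftyLimit{\Sigma}{B}$ of those $f$ whose multiplicative extension $\chi_f$ satisfies the further linear relations $\chi_f(ab)=\chi_f(a)\chi_f(b)$ that are \emph{not} automatic, i.e. the ones coming from the non-polynomial part of the Hopf-algebra structure on $\Hopf$ relative to the free generators $\Sigma$ — but in fact, because $\Hopf$ is \emph{as an algebra} isomorphic to $\K[\Sigma]$ (resp. $\K\langle\Sigma\rangle$), \emph{every} element of $\ellInftyLimit{\Sigma}{B}$ already extends to a controlled Hopf character; hence $\res(\controlledChar{\Hopf}{B}) = \ellInftyLimit{\Sigma}{B}$ and $\controlledChar{\Hopf}{B} = \controlledChar{M}{B}$ as sets. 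Thus $\controlledChar{\Hopf}{B}$ is a $C^\omega_\K$-manifold with the single global chart $\res$, modelled on the (LB)-space $\ellInftyLimit{\Sigma}{B}$ (which is Hausdorff and locally convex by Proposition \ref{prop_properties_of_ellInftyLimit}).
 \item[(2)] By Lemma \ref{lem: char:mult}(1), $\controlledChar{\Hopf}{B}$ is an abstract group under $\star$, with inversion $\phi \mapsto \phi \circ S_\Hopf$ (well-defined since $S_\Hopf$ is $\ell^1_\leftarrow$-continuous by the control pair hypothesis) and unit $u_B \circ \epsilon_\Hopf$.
 \item[(3)] Show multiplication $\mu \colon \controlledChar{\Hopf}{B} \times \controlledChar{\Hopf}{B} \to \controlledChar{\Hopf}{B}$ is $C^\omega_\K$. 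Read in the global chart, $\mu$ becomes the map $(f,g) \mapsto (\chi_f \star \chi_g)|_\Sigma$. Using the extension operator $\Psi$ of Proposition \ref{prop_extension_LB} (which is $C^\omega_\K$), $\chi_f = \Psi(f)$ and $\chi_g = \Psi(g)$; the convolution $\star \colon \ellInftyLimit{\Hopf}{B} \times \ellInftyLimit{\Hopf}{B} \to \ellInftyLimit{\Hopf}{B}$ is continuous and \emph{bilinear}, hence $C^\omega_\K$, by Proposition \ref{prop: lcvx:alg}; and restriction to $\Sigma$ is continuous linear. Composing: $\mu$ is a composition of analytic maps, hence analytic.
 \item[(4)] Show inversion $\iota \colon \phi \mapsto \phi \circ S_\Hopf$ is $C^\omega_\K$. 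In the chart this is $f \mapsto (\Psi(f) \circ S_\Hopf)|_\Sigma$. Here $S_\Hopf \colon \Hopf \to \Hopf$ is $\ell^1_\leftarrow$-continuous, so precomposition with $S_\Hopf$ is a continuous \emph{linear} self-map of $\ellInftyLimit{\Hopf}{B}$ (Lemma \ref{lem_continuity_of_linear_maps} applied to the adjoint), hence analytic; again $\iota$ is a composition of analytic maps.
 \item[(5)] Conclude: the group operations are analytic in the global chart, so $\controlledChar{\Hopf}{B}$ is an analytic Lie group modelled on $\ellInftyLimit{\Sigma}{B}$, which is an (LB)-space.
\end{enumerate}

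\textbf{Main obstacle.} The delicate point is step (3)–(4): one must be careful that $\star$ and $\phi \mapsto \phi\circ S_\Hopf$ genuinely map controlled maps to controlled maps with the right continuity, and the estimate in the proof of Proposition \ref{prop: lcvx:alg} is exactly what secures this for $\star$, while the control pair axiom (Definition \ref{defn_control_pair}) — $\ell^1_\leftarrow$-continuity of $\CoMultH$ and $\AntipodeH$ — is what secures it for both operations. A subsidiary subtlety is the identification in step (1): one must know that a controlled function on $\Sigma$ extends not merely to a controlled monoid character but to a bona fide Hopf-algebra character; this is automatic because "Hopf character" and "monoid character of $(\Hopf,m_\Hopf)$" coincide under the algebra isomorphism $\Hopf \cong \K[\Sigma]$ (resp. $\K\langle\Sigma\rangle$), so no extra constraint is imposed and the model space really is all of $\ellInftyLimit{\Sigma}{B}$. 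Once these identifications are in place, everything else is a formal assembly of the analyticity results already proved.
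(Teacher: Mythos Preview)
Your proposal is correct and follows essentially the same approach as the paper's proof. Both arguments rest on the same three ingredients: Proposition \ref{prop_characters_form_submanifold} gives the analytic submanifold structure on $\controlledChar{M}{B}=\controlledChar{\Hopf}{B}$ (with global chart $\res$ onto $\ellInftyLimit{\Sigma}{B}$), Proposition \ref{prop: lcvx:alg} makes $\star$ continuous bilinear (hence analytic) on the ambient space $\ellInftyLimit{\Hopf}{B}$, and the control-pair axiom on $S_\Hopf$ makes inversion the restriction of a continuous linear map; the paper phrases steps (3)--(4) as ``restrict analytic maps on the ambient space to a closed split analytic submanifold'', while you phrase them as ``compose $\Psi$, $\star$, and restriction in the global chart'', which is the same argument unwound.
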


 \begin{proof}
  We use the fact the $\Hopf$ is -- as an algebra -- isomorphic to $\K[\Sigma]$ or $\K\langle\Sigma\rangle$. Hence, by Proposition \ref{prop_characters_form_submanifold} the controlled characters form a closed split $\K$-analytic submanifold of the surrounding space $\ellInftyLimit{\Hopf}{B}$. 
  Since convolution is analytic (even continuous bilinear) on the surrounding space by Proposition \ref{prop: lcvx:alg}, it is still analytic when restricting to an analytic submanifold. 
  So, group multiplication is analytic. Since the inverse of a character is given by pre-composition with the antipode map, inversion on the group is the restriction of a continuous linear map on the surrounding space and hence analytic as well.
  This turns $\controlledChar{\Hopf}{B}$ into a $\K$-analytic Lie group. 
  As a manifold it is diffeomorphic to $\ellInftyLimit{\Sigma}{B}$ by Proposition \ref{prop_characters_form_submanifold}, so it is modelled on that space.
 \end{proof}
 
  Before we identify the Lie algebra of the group of $B$-valued controlled characters recall from  \cite[Theorem 2.8]{BDS16} the Lie group of $B$-valued characters.\smallskip
 
 \begin{setup}[\textbf{Lie group structure of $\Char{\Hopf}{B}$}]\label{setup:Chargp} \emph{
 Let $B$ be a commutative Banach algebra and $\Hopf$ be a graded and connected Hopf algebra (both over $\K$). 
 Then the character group $(\Char{\Hopf}{B},\star)$ of $B$-valued characters is a $\K$-analytic Lie group whose Lie algebra is $\InfChar{\Hopf}{B}$.
 The Lie group exponential is given by the exponential series} $$\exp_{\Char{\Hopf}{B}} \colon \InfChar{\Hopf}{B} \rightarrow \Char{\Hopf}{B} , \quad \phi \mapsto \sum_{n\geq 0} \frac{\phi^{\star_n}}{n!},$$
 \emph{where $\star_n$ is the $n$fold convolution of $\phi$ with itself. Finally, the canonical inclusion $\Char{\Hopf}{B} \rightarrow \textup{Hom}_\K (\Hopf, B)$ realises $\Char{\Hopf}{B}$ as a closed $\K$-analytic submanifold where the right hand side is endowed with the topology of pointwise convergence.}\smallskip
 \end{setup}
 Character groups of graded and connected Hopf algebras are infinite-dimensional Lie groups.
 The controlled character groups just constructed are naturally subgroups of the character groups and the inclusion turns out to be a Lie group morphism.
  
 \begin{lem}\label{lem: sl_in_full}
  Let $((\Hopf, \Sigma), (\Gr{n})_{n\in \N})$ be a control pair and $\controlledChar{\Hopf}{B}$ the Lie group induced by it.
  The canonical inclusion $\iota_{\Hopf, B} \colon \controlledChar{\Hopf}{B} \rightarrow \Char{\Hopf}{B},\ \phi \mapsto \phi$ is a $\K$-analytic morphism of Lie groups.
 \end{lem}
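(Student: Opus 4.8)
The plan is to realise $\iota_{\Hopf,B}$ as the restriction of a continuous linear map between the ambient locally convex spaces in which the two character groups sit as submanifolds, and then to read off analyticity from the submanifold structures. Recall from Theorem~\ref{thm: controlled Lie group} (via Proposition~\ref{prop_characters_form_submanifold}) that $\controlledChar{\Hopf}{B}$ is a closed split $C^\omega_\K$-submanifold of the space $\ellInftyLimit{\Hopf}{B}$ of controlled linear maps (see~\ref{setup: niceHopf}), whereas by~\ref{setup:Chargp} the group $\Char{\Hopf}{B}$ is a closed $C^\omega_\K$-submanifold of $\Hom_\K(\Hopf,B)$ endowed with the topology of pointwise convergence.

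First I would identify, using $\Hopf = \K^{(M)}$ with $M = \FreeCommMonoid{\Sigma}$ or $M = \FreeMonoid{\Sigma}$ and the isomorphism of~\ref{setup_linear_maps_defined_on_a_basis}, the space $\Hom_\K(\Hopf,B)$ carrying the topology of pointwise convergence with the product space $B^M$: a linear map on $\K^{(M)}$ is determined by its values on the basis $M$, and a net of such maps converges pointwise on $\Hopf$ if and only if it converges on each basis element. Under this identification the inclusion becomes the linear map $\ellInftyLimit{\Hopf}{B} = \ellInftyLimit{M}{B} \to B^M$, $\phi \mapsto \phi$, which is exactly the map shown to be continuous in Proposition~\ref{prop_properties_of_ellInftyLimit}(a); being continuous and linear it is $C^\omega_\K$. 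Restricting it to the submanifold $\controlledChar{\Hopf}{B} \subseteq \ellInftyLimit{\Hopf}{B}$ keeps it $C^\omega_\K$, and its image consists of characters, hence lies in $\Char{\Hopf}{B}$. It then remains to corestrict this map to the submanifold $\Char{\Hopf}{B}$; I would do this by passing to a submanifold chart of $\Char{\Hopf}{B}$ inside $\Hom_\K(\Hopf,B)$ and composing with the projection onto the modelling subspace, which is legitimate because, by~\ref{setup:Chargp}, the manifold topology of $\Char{\Hopf}{B}$ coincides with the subspace topology. This shows $\iota_{\Hopf,B}$ is $C^\omega_\K$, and since on both groups multiplication is convolution and inversion is precomposition with the antipode $S_\Hopf$, the map $\iota_{\Hopf,B}$ is a group homomorphism, hence a morphism of Lie groups.

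The only step that requires genuine care is the corestriction to $\Char{\Hopf}{B}$: this uses that $\Char{\Hopf}{B}$ is a split submanifold of $\Hom_\K(\Hopf,B)$ whose manifold topology is the induced one, which is precisely the content of the structure theorem~\ref{setup:Chargp} from \cite{BDS16}. Everything else is formal and rests on Propositions~\ref{prop_properties_of_ellInftyLimit} and~\ref{prop_characters_form_submanifold}.
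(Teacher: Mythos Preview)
Your proposal is correct and follows essentially the same route as the paper's own proof: identify $\Hom_\K(\Hopf,B)$ with $B^M$, invoke Proposition~\ref{prop_properties_of_ellInftyLimit}(a) to see that the ambient inclusion $\ellInftyLimit{\Hopf}{B}\to B^M$ is continuous linear (hence $C^\omega_\K$), and then (co-)restrict to the analytic submanifolds $\controlledChar{\Hopf}{B}$ and $\Char{\Hopf}{B}$. You are slightly more explicit than the paper about the corestriction step, spelling out the use of a submanifold chart, but this is exactly what the paper's one-line appeal to ``closed analytic submanifolds'' is doing.
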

 
 \begin{proof}
  Obviously $\iota_{\Hopf,B}$ is a group morphism as $\controlledChar{\Hopf}{B}$ is a subgroup of $\Char{\Hopf}{B}$.
  Hence we only have to prove that it is $\K$-analytic.
  As outlined in \ref{section_controlled_Hopf}, the Hopf algebra $\Hopf$ is -- as an algebra -- either $\K[\Sigma]$ or $\K\langle \Sigma\rangle$ for some graded index set $\Sigma$.
  Let $M$ be $\FreeMonoid{\Sigma}$ or $\FreeCommMonoid{\Sigma}$, respectively.
  Thus with respect to the topology of pointwise convergence we have an isomorphism of topological vector spaces $\textup{Hom}_\K (\Hopf , B) \rightarrow B^M$ (compare \ref{setup_linear_maps_defined_on_a_basis}).
  Proposition \ref{prop_properties_of_ellInftyLimit} (a) shows that the inclusion $I \colon \ellInftyLimit{\Hopf}{B} \rightarrow B^M \cong \textup{Hom}_\K (\Hopf,B)$ is continuous linear, whence $\K$-analytic. 
  As $\iota_{\Hopf,B}$ arises as the (co-)restriction of $I$ to the closed analytic submanifolds $\controlledChar{\Hopf}{B} \subseteq \ellInftyLimit{\Hopf}{B}$  and $\Char{\Hopf}{B} \subseteq \textup{Hom}_\K (\Hopf,B)$ it is again $\K$-analytic.
 \end{proof}
 
 Note however, that the topology of the controlled character groups is finer than the subspace topology induced by the full character group. 
 Thus the Lie group structure of the controlled character groups does not turn them into Lie subgroups of the full character group. 
 Using the Lie group morphism just constructed we can now identify the Lie algebra as the Lie algebra of all controlled infinitesimal characters with the following natural Lie bracket.
 
 \begin{prop}															\label{prop_Lie_algebra_of_controlled_characters}
  The Lie algebra of the Lie group $\controlledChar{\Hopf}{B}$ induced by the control pair $((\Hopf, \Sigma), (\Gr{n})_{n\in \N})$, is given by $(\controlledInfChar{\Hopf}{B},\LB )$, where the Lie bracket is the commutator bracket induced by the convolution
    \begin{displaymath}
    \LB[\varphi , \psi] = \varphi \star \psi - \psi \star\varphi.
    \end{displaymath}
  As a locally convex vector space $\controlledInfChar{\Hopf}{B}$ is isomorphic to $\ellInftyLimit{\Sigma}{B}$ via 
  \[
   \func{\res}{\controlledInfChar{\Hopf}{B}}{\ellInftyLimit{\Sigma}{B}}{\psi}{\psi|_\Sigma}
  \]
  whose inverse extends a function $f\in\ellInftyLimit{\Sigma}{B}$ to all words by assigning each word of length different from $1$ to zero.
 \end{prop}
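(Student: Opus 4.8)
The strategy is to identify the Lie algebra of $\controlledChar{\Hopf}{B}$ by transporting the known computation of the Lie algebra of the full character group $\Char{\Hopf}{B}$ (recalled in \ref{setup:Chargp}) along the analytic inclusion $\iota_{\Hopf,B}$ of Lemma \ref{lem: sl_in_full}. First I would recall that by Theorem \ref{thm: controlled Lie group} the group $\controlledChar{\Hopf}{B}$ is an analytic submanifold of the locally convex algebra $\ellInftyLimit{\Hopf}{B}$, so its tangent space at the identity $\one=u_B\circ\epsilon_\Hopf$ is a closed linear subspace of $\ellInftyLimit{\Hopf}{B}$. Since $\res\colon \controlledChar{\Hopf}{B}\to\ellInftyLimit{\Sigma}{B}$ is a global chart (Proposition \ref{prop_characters_form_submanifold}), differentiating it at $\one$ gives a topological isomorphism $T_{\one}\controlledChar{\Hopf}{B}\to\ellInftyLimit{\Sigma}{B}$; one checks directly from the formula for $\res$ and its inverse $\Psi$ (which is $\K$-analytic by Proposition \ref{prop_extension_LB}) that $T_{\one}\res$ is exactly the restriction map $\psi\mapsto\psi|_\Sigma$, and that $T_f\Psi$ at $f=0$ sends a function in $\ellInftyLimit{\Sigma}{B}$ to the linear map extending it by zero on all words of length $\neq 1$. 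This yields the second assertion of the proposition as soon as the first is established.

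To pin down which subspace of $\ellInftyLimit{\Hopf}{B}$ arises, I would argue as follows. Because $\iota_{\Hopf,B}\colon\controlledChar{\Hopf}{B}\to\Char{\Hopf}{B}$ is an analytic group morphism, its differential $L(\iota_{\Hopf,B})\colon T_{\one}\controlledChar{\Hopf}{B}\to\InfChar{\Hopf}{B}$ is a morphism of Lie algebras, and under the identification of both tangent spaces with spaces of linear maps $\Hopf\to B$ (via the continuous inclusions into $\Hom_\K(\Hopf,B)$ with the topology of pointwise convergence, Proposition \ref{prop_properties_of_ellInftyLimit}(a) and \ref{setup:Chargp}) this differential is simply the inclusion map. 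Hence $T_{\one}\controlledChar{\Hopf}{B}$, viewed inside $\Hom_\K(\Hopf,B)$, is contained in $\InfChar{\Hopf}{B}$; since it also lies in $\ellInftyLimit{\Hopf}{B}$, we get $T_{\one}\controlledChar{\Hopf}{B}\subseteq\controlledInfChar{\Hopf}{B}$. For the reverse inclusion I would use the explicit chart: given a controlled infinitesimal character $\psi$, its restriction $\psi|_\Sigma$ lies in $\ellInftyLimit{\Sigma}{B}$, and since $\psi$ vanishes on $1_\Hopf$ and, being an infinitesimal character, is determined by its values on $\Sigma$ together with the zero values on all decomposable words of length $\neq 1$ (this is the standard fact that infinitesimal characters are the maps vanishing on products of two augmentation-ideal elements, see \cite[4.3]{MR2523455}), one checks $\psi = T_0\Psi(\psi|_\Sigma)$, so $\psi\in T_{\one}\controlledChar{\Hopf}{B}$. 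Thus the two spaces coincide.

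Finally, for the bracket: the Lie bracket on $L(\controlledChar{\Hopf}{B})$ is by definition the one coming from the Lie group structure, and since the group multiplication is the restriction of the continuous bilinear convolution on the ambient associative algebra $(\ellInftyLimit{\Hopf}{B},\star)$ (Proposition \ref{prop: lcvx:alg}), the associated Lie bracket on its tangent space at the identity is the commutator bracket $\LB[\varphi,\psi]=\varphi\star\psi-\psi\star\varphi$; this is the standard computation of the Lie bracket of the unit group of a continuous-bilinear locally convex algebra, and it restricts to the analytic subgroup $\controlledChar{\Hopf}{B}$. Since the bracket is defined by the ambient algebra structure, it matches the one on $\controlledInfChar{\Hopf}{B}$ inherited from $(\ellInftyLimit{\Hopf}{B},\LB)$ recorded in Lemma \ref{lem: char:mult}(2), completing the identification.

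**Main obstacle.** I expect the only real subtlety to be the bookkeeping that identifies $T_{\one}\res$ and $T_0\Psi$ precisely with the restriction map and the extend-by-zero map, and correspondingly that $L(\iota_{\Hopf,B})$ is literally the inclusion of linear maps rather than merely an injective Lie algebra morphism; all of this is "soft" but must be done carefully because the topologies on $\controlledChar{\Hopf}{B}$ and $\Char{\Hopf}{B}$ differ. Everything else — the algebraic description of infinitesimal characters and the commutator-bracket computation for a unit group — is routine and can be quoted from \cite{MR2523455} and \cite{BDS16}.
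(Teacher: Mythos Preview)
Your proposal is correct and follows essentially the same approach as the paper: both use the analytic group morphism $\iota_{\Hopf,B}$ from Lemma \ref{lem: sl_in_full} to identify $\Lf(\iota_{\Hopf,B})$ with the inclusion map, transport the known bracket formula from $\InfChar{\Hopf}{B}$, and read off the isomorphism with $\ellInftyLimit{\Sigma}{B}$ as the tangent of the global chart $\res$ from Proposition \ref{prop_characters_form_submanifold}. You are somewhat more explicit than the paper in checking both inclusions $T_{\one}\controlledChar{\Hopf}{B}\subseteq\controlledInfChar{\Hopf}{B}$ and conversely, and you offer a second route to the bracket via the ambient associative algebra $(\ellInftyLimit{\Hopf}{B},\star)$; the paper argues only via the push-forward along $\iota_{\Hopf,B}$, but both are valid and the extra care does no harm.
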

 
 \begin{proof}
  By Lemma \ref{lem: sl_in_full},  $\iota_{\Hopf, B} \colon \controlledChar{\Hopf}{B} \rightarrow \Char{\Hopf}{B},\ \chi \mapsto \chi$ is a $\K$-analytic morphism of Lie groups. 
  Hence $\Lf (\iota_{\Hopf,B}) := T_{\one} \iota_{\Hopf,B} \colon \controlledInfChar{\Hopf}{B} \rightarrow \InfChar{\Hopf}{B}$ is a morphism of Lie algebras.
  Now as $\iota_{\Hopf,B}$ arises as the restriction of a continuous linear map $I$ to a $\K$-analytic submanifold, we deduce that  $T_{\one} \iota_{\Hopf,B}$ is the restriction of $T_{\one} I$ which can be identified again with $I$.
  In conclusion $\Lf(\iota_{\Hopf,B}) (\varphi) = \varphi$ for every $\varphi \in  \controlledChar{\Hopf}{B}$ by definition of $I$.
  Hence $\LB = \Lf(\iota_{\Hopf,B}) \circ \LB  = \LB[\Lf(\iota_{\Hopf,B}) (\cdot), \Lf(\iota_{\Hopf,B})(\cdot)]_{\InfChar{\Hopf}{B}} = \LB_{\InfChar{\Hopf}{B}}$.
  Summing up the assertion follows from the formula for the Lie bracket of $\InfChar{\Hopf}{B}$.

  It remains to show the statement about the isomorphism. 
  By Proposition \ref{prop_characters_form_submanifold} we know that the $C^\omega_\K$-manifold $\controlledChar{\Hopf}{B}$ is diffeomorphic to $\ellInftyLimit{\Sigma}{B}$ via the diffeomorphism
  \[
   \nnfunc{\controlledChar{\Hopf}{B}}{\ellInftyLimit{\Sigma}{B}}{\phi}{\phi|_\Sigma.}
  \]
  Taking the tangent map at the identity of $\controlledChar{\Hopf}{B}$ we obtain an isomorphism of the corresponding tangent spaces $\controlledInfChar{\Hopf}{B}$ and $\ellInftyLimit{\Sigma}{B}$. Since the diffeomorphism can be extended to the continuous linear map 
  \[
   \nnfunc{\ellInftyLimit{\Hopf}{B}}{\ellInftyLimit{\Sigma}{B}}{\phi}{\phi|_\Sigma}
  \]
  its tangent map at each point is also given by this exact formula.
 \end{proof}

 In Theorem \ref{thm: controlled Lie group} we have seen that the controlled characters of a real Hopf algebra form an analytic real Lie group. 
 We will now establish that this group admits a complexification in the following sense (cf.\ \cite[9.6]{HG15reg}).
 
 \begin{defn}  \label{defn: complexification}
 Let $G$ be a real analytic Lie group modelled on the locally convex space $E$ and $G_\C$ be a complex analytic Lie group modelled on $E_\C$.
 Then $G_\C$ is called a \emph{complexification} of $G$ if $G$ is a real submanifold of $G_\C$, the inclusion $G \rightarrow G_\C$ is a group homomorphism and for each $g \in G$, there exists an open $g$-neighbourhood $V \subseteq G_\C$ and a complex analytic diffeomorphism $\phi \colon V \rightarrow W \subseteq E_\C$ such that $\phi(V\cap G) = W\cap E$.
 \end{defn}

  Note that for a real Hopf algebra $\Hopf$, also its complexification $\Hopf_\C$
  is a Hopf algebra.
  If $((\Hopf, \Sigma), (\Gr{n})_{n\in \N})$ is a control pair then, since $\Hopf$ is isomorphic to $\R[\Sigma]$ or $\R\langle \Sigma \rangle$, clearly $\Hopf_\C$ is isomorphic to $\C[\Sigma]$ or $\C\langle \Sigma \rangle$, respectively.
  For a commutative Banach algebra $B$ its complexification $B_\C$ is again a Banach algebra (see \cite[Lemma 2]{MR0273396} such that its norm coincides on $B$ with the original norm. From Lemma \ref{lem_continuity_of_linear_maps} we deduce thus:
   
  \begin{lem}\label{lem: triple:compl}
   If $((\Hopf , \Sigma), (\omega_n)_{n \in \N})$ is a control pair then $((\Hopf_\C , \Sigma), (\omega_n)_{n \in \N})$ is a control pair.
  \end{lem}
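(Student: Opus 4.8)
The plan is to reduce the claim to the continuity criterion of Lemma~\ref{lem_continuity_of_linear_maps} and to observe that complexification changes none of the quantities appearing in it. Recall that, by Definition~\ref{defn_control_pair}, asserting that $((\Hopf_\C,\Sigma),(\Gr{n})_{n\in\N})$ is a control pair means exactly that the comultiplication $\Delta_{\Hopf_\C}$ and the antipode $S_{\Hopf_\C}$ are $\ell^1_\leftarrow$-continuous, where we use the identifications $\Hopf_\C = \C^{(M)}$ and $\Hopf_\C \otimes \Hopf_\C = \C^{(M\times M)}$ with $M = \FreeCommMonoid{\Sigma}$ or $M = \FreeMonoid{\Sigma}$ --- the \emph{same} graded index set $M$ that underlies $\Hopf$, since the algebra isomorphism $\Hopf \cong \R[\Sigma]$ (resp.\ $\R\langle\Sigma\rangle$) complexifies to an algebra isomorphism $\Hopf_\C \cong \C[\Sigma]$ (resp.\ $\C\langle\Sigma\rangle$).

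First I would record that the Hopf structure maps of $\Hopf_\C$ are the unique $\C$-linear (resp.\ $\C$-bilinear) extensions of those of $\Hopf$, so on every basis word $\tau \in M$ one has $\Delta_{\Hopf_\C}(\tau) = \Delta_\Hopf(\tau)$ and $S_{\Hopf_\C}(\tau) = S_\Hopf(\tau)$ with the very same (real, hence in particular complex) coefficients. Since the weighted norm of an element $\sum c_{\mu,\sigma}\,\mu\otimes\sigma$ of $\K^{(M\times M)}$ --- and likewise of an element of $\K^{(M)}$ --- depends only on the moduli $\abs{c_{\mu,\sigma}}$ and on the weights $\Gr{k}$, this gives $\ellOneNorm{\Delta_{\Hopf_\C}(\tau)}{k} = \ellOneNorm{\Delta_\Hopf(\tau)}{k}$ and $\ellOneNorm{S_{\Hopf_\C}(\tau)}{k} = \ellOneNorm{S_\Hopf(\tau)}{k}$ for all $k\in\N$ and all $\tau\in M$.

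Then I would invoke the hypothesis: because $((\Hopf,\Sigma),(\Gr{n})_{n\in\N})$ is a control pair, Lemma~\ref{lem_continuity_of_linear_maps} provides, for every $k_1\in\N$, an index $k_2\in\N$ and a constant $C>0$ with $\ellOneNorm{\Delta_\Hopf(\tau)}{k_1} \leq C\,\Gr{k_2}(\abs{\tau})$ for all $\tau\in M$, and analogously for $S_\Hopf$. By the preceding paragraph the identical inequalities hold with $\Delta_{\Hopf_\C}$ and $S_{\Hopf_\C}$ in place of $\Delta_\Hopf$ and $S_\Hopf$, for the same $k_2$ and $C$. Applying the converse direction of Lemma~\ref{lem_continuity_of_linear_maps} to $\Hopf_\C$ then shows that $\Delta_{\Hopf_\C}$ and $S_{\Hopf_\C}$ extend to continuous operators on the relevant projective limits, i.e.\ are $\ell^1_\leftarrow$-continuous, which is precisely the assertion.

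There is essentially no real obstacle here: the lemma is pure bookkeeping, and the only point deserving an explicit sentence is the observation that the weighted $\ell^1$-norms on $\K^{(M)}$ and $\K^{(M\times M)}$ see only the absolute values of the coefficients, so that the passage from $\R$ to $\C$ leaves every relevant estimate literally unchanged. (The remark that $B_\C$ can be normed so as to become a Banach algebra restricting to the norm of $B$ is not needed for this lemma; it is used only afterwards, when complexifying the Lie group itself.)
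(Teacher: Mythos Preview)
Your proposal is correct and follows exactly the approach the paper intends: the paper simply states that the lemma is ``immediately deduced from Lemma~\ref{lem_continuity_of_linear_maps}'', and you have spelled out precisely why---the structure maps of $\Hopf_\C$ agree with those of $\Hopf$ on basis elements, the $\ell^1_k$-norms depend only on absolute values of coefficients, so the same estimates carry over and the criterion of Lemma~\ref{lem_continuity_of_linear_maps} applies verbatim over~$\C$.
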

 
  Armed with this knowledge, we can now generalise the complexification of the (tame) Butcher group discussed in \cite[Corollary 2.8]{BS16}. 
 
 \begin{prop}								\label{prop: complexification}
  Let $B$ be a commutative real Banach algebra and $((\Hopf , \Sigma), (\omega_n)_{n \in \N})$ be a control pair such that $\Hopf$ is a real Hopf algebra. Denote by $\Hopf_\C$ and $B_\C$ their complexifications. 
  Then the complex Lie group
  \begin{itemize}
   \item [\textup{(a)}] $\controlledChar{\Hopf_\C}{B_\C}$ constructed in Theorem \ref{thm: controlled Lie group} is the complexification of the real Lie group $\controlledChar{\Hopf}{B}$.
   \item [\textup{(b)}] $\Char{\Hopf_\C}{B_\C}$ (cf.\ \ref{setup:Chargp}) is the complexification of the real Lie group $\Char{\Hopf}{B}$.
  \end{itemize}
  Summing up, the following diagram is commutative: 
  \begin{equation}\label{diag: complexification} \begin{aligned}
   \begin{xy}
  \xymatrix{
     \controlledChar{\Hopf}{B} \ar[rr]^-{\subseteq} \ar[d]_{\iota_{\Hopf, B}}   & &  \controlledChar{\Hopf_\C}{B_\C} \ar[d]^{\iota_{\Hopf_\C, B_\C}}  \\
     \Char{\Hopf}{B} \ar[rr]^-{\subseteq}         &    &   \Char{\Hopf_\C}{B_\C}   
  }
   \end{xy}
   \end{aligned}
  \end{equation}
 \end{prop}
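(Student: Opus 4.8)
The plan is to reduce both assertions, and the commutativity of~\eqref{diag: complexification}, to three points: that $((\Hopf_\C,\Sigma),(\omega_n)_{n\in\N})$ is again a control pair, that the model spaces and the ambient $\Hom$-spaces of the complex groups are the complexifications of the real ones, and that a suitable submanifold chart restricts correctly to the ``real locus''. For the preliminaries: by Lemma~\ref{lem: triple:compl}, $((\Hopf_\C,\Sigma),(\omega_n)_{n\in\N})$ is a control pair, so Theorem~\ref{thm: controlled Lie group} makes $\controlledChar{\Hopf_\C}{B_\C}$ a complex analytic Lie group modelled on $\ellInftyLimit{\Sigma}{B_\C}$ and \ref{setup:Chargp} makes $\Char{\Hopf_\C}{B_\C}$ one too. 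Since $B_\C=B\oplus iB$ carries a Banach algebra norm restricting to that of $B$ (\cite[Lemma~2]{MR0273396}), the decomposition argument from the proof of Proposition~\ref{prop_extension_LB} (finite direct sums commute with direct limits, \cite[Theorem~3.4]{MR1878717}) gives $\ellInftyLimit{\Sigma}{B_\C}=\ellInftyLimit{\Sigma}{B}\oplus i\,\ellInftyLimit{\Sigma}{B}$ as locally convex spaces, so the model space of $\controlledChar{\Hopf_\C}{B_\C}$ is the complexification of that of $\controlledChar{\Hopf}{B}$; likewise $\Hom_\C(\Hopf_\C,B_\C)\cong\Hom_\R(\Hopf,B)\oplus i\,\Hom_\R(\Hopf,B)$ as topological vector spaces for the topology of pointwise convergence, since a $\C$-linear map on $\Hopf_\C$ is determined by its restriction to $\Hopf$.

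Next I would analyse extension of scalars. For $\phi\in\Hom_\R(\Hopf,B)$ let $\phi_\C\colon\Hopf_\C\to B_\C$, $\phi_\C(a+ib)=\phi(a)+i\phi(b)$, be its unique $\C$-linear extension; $\phi\mapsto\phi_\C$ is $\R$-linear and continuous for the pointwise topologies, and for the $\ell^\infty_\rightarrow$-topologies on the controlled subspaces. Because the structure maps of $\Hopf_\C$ (product, coproduct, counit, antipode) are the $\C$-linear extensions of those of $\Hopf$, this map sends characters to characters, is multiplicative for $\star$, fixes the unit, and intertwines precomposition with the two antipodes; and since the norm of $B_\C$ restricts to that of $B$ while the growth family is unchanged, $\phi_\C$ is controlled iff $\phi$ is. So $\phi\mapsto\phi_\C$ gives injective group homomorphisms $\Char{\Hopf}{B}\hookrightarrow\Char{\Hopf_\C}{B_\C}$ and $\controlledChar{\Hopf}{B}\hookrightarrow\controlledChar{\Hopf_\C}{B_\C}$, whose images are exactly the real loci: a $\C$-linear character $\chi$ of $\Hopf_\C$ lies in the image iff $\chi(\Hopf)\subseteq B$, which — as $\Sigma$ generates $\Hopf$ as an algebra and $B$ is a subalgebra of $B_\C$ — is equivalent to $\chi(\Sigma)\subseteq B$. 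Under the identifications above this reads $\Char{\Hopf}{B}=\Char{\Hopf_\C}{B_\C}\cap\Hom_\R(\Hopf,B)$ and, via $\res$, $\res(\controlledChar{\Hopf}{B})=\res(\controlledChar{\Hopf_\C}{B_\C})\cap\ellInftyLimit{\Sigma}{B}$.

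For part~(a) I would then invoke the global chart: by Proposition~\ref{prop_characters_form_submanifold} over $\C$ (as used in Theorem~\ref{thm: controlled Lie group}), $\res\colon\controlledChar{\Hopf_\C}{B_\C}\to\ellInftyLimit{\Sigma}{B_\C}$ is a $C^\omega_\C$-diffeomorphism, hence a global chart, whose restriction to $\controlledChar{\Hopf}{B}$ is the global chart $\res\colon\controlledChar{\Hopf}{B}\to\ellInftyLimit{\Sigma}{B}$. With $V=\controlledChar{\Hopf_\C}{B_\C}$, $W=\ellInftyLimit{\Sigma}{B_\C}$ and $\phi=\res$, the real-locus identity together with the topological direct-sum decomposition of the model spaces verifies Definition~\ref{defn: complexification}, so $\controlledChar{\Hopf_\C}{B_\C}$ is the complexification of $\controlledChar{\Hopf}{B}$. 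Part~(b) is run analogously inside the $\Hom$-spaces, with a real-structure-compatible chart of $\Char{\Hopf_\C}{B_\C}$ in place of $\res$ (discussed below). Commutativity of~\eqref{diag: complexification} is then immediate, as all four maps act on underlying elements either as the identity (the vertical inclusions $\iota$) or as $\phi\mapsto\phi_\C$ (the horizontal arrows).

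The step I expect to be the main obstacle is part~(b): unlike the controlled case, the full character group need not come with a single global submanifold chart, so one must verify that the atlas of $\Char{\Hopf_\C}{B_\C}$ from \cite{BDS16} is compatible with the real structure — i.e.\ a chart around a real character $g$ maps $V\cap\Char{\Hopf}{B}$ onto the real points of $W$. Since those charts are obtained by left-translating a fixed chart at $\one$ given by the exponential series (which commutes with extension of scalars and preserves the real locus; for graded connected Hopf algebras it is in fact a global chart, with inverse the logarithm series) and left translation by a real $g$ restricts to the real left translation, this should go through — but it is the one point where one has to inspect the construction of \cite{BDS16} rather than argue purely formally. Alternatively, (b) follows directly from that global exponential chart exactly as in part~(a), mirroring \cite[Corollary~2.8]{BS16} for the Butcher group.
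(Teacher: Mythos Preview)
Your argument for part~(a) is essentially identical to the paper's: both identify the complexification of the model space via the stepwise decomposition $\ellInfty{k}{\Sigma}{B_\C}=\ellInfty{k}{\Sigma}{B}\oplus i\,\ellInfty{k}{\Sigma}{B}$ together with \cite[Theorem~3.4]{MR1878717}, and then use the global chart $\res$ from Proposition~\ref{prop_characters_form_submanifold} to verify Definition~\ref{defn: complexification}.

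For part~(b) you take a more cautious route than the paper. The paper does not inspect the exponential chart or the atlas of \cite{BDS16} at all; instead it argues purely at the level of the ambient $\Hom$-spaces: choosing a vector-space basis $\mathcal{I}$ of $\Hopf$ (which is simultaneously a $\C$-basis of $\Hopf_\C$) gives topological identifications $\Hom_\R(\Hopf,B)\cong B^{\mathcal{I}}$ and $\Hom_\C(\Hopf_\C,B_\C)\cong B_\C^{\mathcal{I}}$, so the inclusion $B^{\mathcal{I}}\subseteq B_\C^{\mathcal{I}}$ is manifestly a complexification of locally convex spaces, and restricting to the closed analytic submanifolds $\Char{\Hopf}{B}\subseteq\Char{\Hopf_\C}{B_\C}$ finishes the job. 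This is shorter, and has the bonus that it works for any graded connected Hopf algebra (not just combinatorial ones), as the paper remarks. Your proposed route via the global $\exp$-chart (which exists and is a diffeomorphism for graded connected Hopf algebras by \ref{setup:Chargp}) is perfectly valid and arguably makes the chart condition of Definition~\ref{defn: complexification} more transparent; it just does more work than the paper deems necessary. The commutativity of~\eqref{diag: complexification} is immediate in both approaches, exactly as you say.
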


 \begin{proof}
  \begin{itemize}
   \item[(a)] The complexification of the modelling space $E:= \ellInftyLimit{\Sigma}{B}$ of $\controlledChar{\Hopf}{B}$ is $\ellInftyLimit{\Sigma}{B_\C}$ by \cite[Theorem 3.4]{MR1878717}, since Proposition \ref{prop_extension_LB} shows that the complexifications of the steps in the limit are $\ellInfty{k}{\Sigma}{B_\C}$.
   In the canonical global charts one immediately sees that the inclusion $\controlledChar{\Hopf}{B} \subseteq \controlledChar{\Hopf}{B}$ is a group morphism which realises $\controlledChar{\Hopf}{B}$ as a real analytic submanifold of $\controlledChar{\Hopf}{B}$.
   Summing up, $\controlledChar{\Hopf_\C}{B_\C}$ is the complexification of $\controlledChar{\Hopf}{B}$ in the sense of Definition \ref{defn: complexification} (taking $\phi$ to be the canonical chart).
   \item[(b)]
   Recall from \cite[Theorem 2.7]{BDS16} that the Lie group $\Char{\Hopf}{B}$ is a closed analytic submanifold of the densely graded algebra $(\text{Hom}_\R (\Hopf, B), \star)$.
   Here $(\text{Hom}_\R (\Hopf, B) \cong B^{\mathcal{I}}$, where the right hand side is endowed with the product topology and $\mathcal{I}$ is some choice of a vector space basis for $\Hopf$. 
   Note that $\mathcal{I}$ is then also a basis for the complex vector space $\Hopf_\C$.
   Obviously, the canonical inclusion $B^{\mathcal{I}} \subseteq B_\C^{\mathcal{I}} \cong \text{Hom}_\C (\Hopf_\C, B_\C)$ is a complexification, whereas $\Char{\Hopf}{B} \rightarrow \Char{\Hopf_\C}{B_\C}$ is one.  
   \end{itemize}
  Obviously, the diagram \eqref{diag: complexification} is commutative by construction and all morphisms are ($\R / \C$-)analytic morphisms of Lie groups.
 \end{proof}

 \begin{rem}
  Part (b) of Proposition \ref{prop: complexification}, is true for arbitrary connected and graded Hopf algebras, i.e.\ they need not be isomorphic (as associative algebras) to $\K [\Sigma]$ or $\K \langle \Sigma \rangle$. 
  Further, the algebra $B$ can be any (real) locally convex algebra.
 \end{rem}
 
 We end this section with some comparison results concerning the groups of controlled characters of a combinatorial Hopf algebra with respect to different growth families.
 \newcommand{\GrTilde}[1]{\widetilde\omega_{#1}}
 \begin{prop}
  Let $(\Hopf, \Sigma)$ be a combinatorial Hopf algebra. Fix two growth families such that $((\Hopf, \Sigma), (\Gr{k})_{k\in \N})$ and $((\Hopf, \Sigma), (\GrTilde{k})_{k\in \N})$ are control pairs.
  For a commutative Banach algebra $B$ denote by $\controlledChar{\Hopf}{B}$ (resp.\ $\widetilde{G}_{\mathrm{ctr}}(\Hopf, B) $ ) the controlled characters with respect to $(\Gr{k})_{k\in \N}$ (resp.\ $(\GrTilde{k})_{k\in \N}$).
  If for every $r \in \N$ there exists $s \in \N$ with $\Gr{r}(n) \leq \GrTilde{s}(n) \quad \forall n\in \N$ then we obtain an analytic Lie group morphism 
  \begin{displaymath}
                                \controlledChar{\Hopf}{B} \rightarrow \widetilde{G}_{\mathrm{ctr}}(\Hopf, B) , \phi \mapsto \phi.                                                                                      
  \end{displaymath}
 \end{prop}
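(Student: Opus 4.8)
The plan is to observe that the map $\phi \mapsto \phi$ is nothing but the inclusion of one subgroup of $\Char{\Hopf}{B}$ into another, and then to read off its analyticity from the global chart descriptions of the two groups supplied by Proposition \ref{prop_characters_form_submanifold}. First I would check well-definedness: if $\phi \in \controlledChar{\Hopf}{B}$, then its restriction to the canonical basis $M$ of $\Hopf$ is $(\Gr{k})_{k\in\N}$-controlled, say $\sup_{w \in M} \Bnorm{\phi(w)}/\Gr{r}(\abs{w}) < \infty$ for some $r \in \N$; choosing $s$ with $\Gr{r}(n) \le \GrTilde{s}(n)$ for all $n$ (the case $n = 0$ being covered by $\Gr{r}(0) = 1 = \GrTilde{s}(0)$ from \ref{axiom_H_monotonic_in_k}) immediately gives $\sup_{w \in M} \Bnorm{\phi(w)}/\GrTilde{s}(\abs{w}) < \infty$, so $\phi$ is a $(\GrTilde{k})_{k\in\N}$-controlled character. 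Hence $\controlledChar{\Hopf}{B} \subseteq \widetilde{G}_{\mathrm{ctr}}(\Hopf, B)$ as subsets of $\Char{\Hopf}{B}$, and since both groups carry the convolution product inherited from $\Char{\Hopf}{B}$ (Lemma \ref{lem: char:mult}), this inclusion is automatically a group homomorphism; only the analyticity requires work.

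For the analyticity I would pass to the global charts $\res$ and $\widetilde{\res}$ given by restriction to $\Sigma$ (Proposition \ref{prop_characters_form_submanifold}). Because the extension of a controlled function on $\Sigma$ to a controlled character is unique (Proposition \ref{prop_extension_LB}), the map $\phi \mapsto \phi$ expressed in these charts is precisely the identity $f \mapsto f$ from the (LB)-space of $(\Gr{k})_{k\in\N}$-controlled functions on $\Sigma$ to the (LB)-space of $(\GrTilde{k})_{k\in\N}$-controlled ones. It then remains to see that this identity map is continuous linear, and for this I would use the universal property of the locally convex direct limit on the source: for a fixed Banach step $\ellInfty{k}{\Sigma}{B}$, pick $s$ with $\Gr{k}(n) \le \GrTilde{s}(n)$ for all $n \in \N_0$; the weight inequality then gives $\sup_\tau \Bnorm{f(\tau)}/\GrTilde{s}(\abs{\tau}) \le \sup_\tau \Bnorm{f(\tau)}/\Gr{k}(\abs{\tau})$, so $f \mapsto f$ is a linear map of operator norm at most $1$ from the $k$-th step into the $s$-th step of the target, hence continuous into the target (LB)-space. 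Since a continuous linear map between locally convex spaces is $C^\omega_\K$, the map is analytic in charts, and therefore $\phi \mapsto \phi$ is an analytic morphism of Lie groups. Equivalently, one could run the argument on the surrounding algebras $\ellInftyLimit{\Hopf}{B}$ and restrict to the submanifolds of characters, exactly as in Lemma \ref{lem: sl_in_full}.

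I do not expect a serious obstacle here: the statement is a soft corollary of the chart structure of Proposition \ref{prop_characters_form_submanifold} together with the uniqueness of the character extension. The only points requiring a little care are the identification, in charts, of $\phi \mapsto \phi$ with the genuine identity of function spaces — which is exactly what Proposition \ref{prop_extension_LB} guarantees — and the harmless extension of the hypothesis $\Gr{r}(n) \le \GrTilde{s}(n)$ from $n \in \N$ to $n = 0$ via the normalisation axiom \ref{axiom_H_monotonic_in_k}.
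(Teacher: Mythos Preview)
Your proof is correct and follows essentially the same approach as the paper: both pass to the global charts $\res$ on $\Sigma$, identify the map in charts with the identity between the two (LB)-spaces, and establish continuity via the universal property of the direct limit by bounding each step $\ellInfty{k}{\Sigma}{B}$ into a suitable step of the target. Your version is in fact more detailed than the paper's, explicitly verifying well-definedness, the group-homomorphism property, and the $n=0$ case.
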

 \begin{proof}
  The modelling space of the Lie group $\controlledChar{\Hopf}{B}$ is the inductive limit of the Banach spaces $\ell^\infty_r (\Sigma, B)$. 
  To distinguish the weights, we denote by $\ell^\infty_{r, \GrTilde{k}} (\Sigma, B)$ the steps of the inductive limit forming the modelling space of $\widetilde{G}_{\mathrm{ctr}}(\Hopf, B) $.
  Now the condition $\Gr{r}(n) \leq \GrTilde{s}(n) \quad \forall n\in \N$ that there is a canonical inclusion of $\ell^\infty_r (\Sigma, B) \rightarrow \ell^\infty_{s, \GrTilde{k}} (\Sigma, B)$ for every $r\in \N$.
  These maps are continuous linear and composing with the limit maps of the steps $\ell^\infty_{s, \GrTilde{k}}$ we obtain continuous linear inclusions of the steps $\ell^\infty_r (\Sigma, B)$ into the modelling space of $\widetilde{G}_{\mathrm{ctr}}(\Hopf, B) $.
  By the universal property of the inductive limit, we thus obtain a continuous linear inclusion of the modelling space of $\controlledChar{\Hopf}{B}$ into the one of $\widetilde{G}_{\mathrm{ctr}}(\Hopf, B) $.
  Composing with the global charts of the groups of controlled characters, we obtain exactly the analytic Lie group morphism described in the proposition.
 \end{proof}

\section{Regularity of controlled character groups}

In this chapter we discuss regularity in the sense of Milnor for groups of controlled characters. 
Our approach here follows the strategy outlined in \cite[Section 4]{BS16}: 

First recall from \cite[Theorem B]{BDS16} that the full character group $\Char{\Hopf}{B}$ is $C^0$-regular. 
Hence to establish semiregularity, we prove that solutions of the differential equations in the full character group factor through $\controlledChar{\Hopf}{B}$ for suitable initial data.
We then prove that under certain assumptions one obtains smooth solutions of the equations on the controlled character group.

In a second step, we use then inductive limit techniques to establish the smoothness of the evolution operator. 
As a consequence the controlled character groups will even be regular.
However, certain assumptions are necessary to obtain the estimates used in our strategy. 
These methods do not allow us to establish a general regularity result as it is very hard to obtain estimates for general combinatorial Hopf algebras.
Since we also want to establish regularity properties for the controlled character groups of complex Hopf algebras a few conventions and remarks are needed:

Complex analytic maps are smooth with respect to the underlying real
 structure by \cite[Proposition 2.4]{hg2002a},
 whence for a complex Hopf algebra $\Hopf$ the complex Lie group $\controlledChar{\Hopf}{B}$ also carries the structure of a real Lie group.
 A complex Lie group is called regular, if the underlying real Lie group is regular (in the sense of the introduction).

 \begin{setup}\label{setup:RHHopf}
  Recall that the coproduct in a graded and connected Hopf algebra can be written as $\Delta (x) = \one_\Hopf \otimes x + x \otimes \one_\Hopf + \overline{\Delta}(x)$, where $\overline{\Delta} \colon \Hopf \rightarrow \bigoplus_{n,m \in \N} \Hopf_n \otimes \Hopf_m$ denotes the reduced coproduct.  
  Following \cite{MP15} a combinatorial Hopf algebra $(\Hopf, \Sigma)$ is called \emph{right-handed combinatorial Hopf algebra} if its reduced coproduct satisfies 
     \begin{displaymath}
      \overline{\Delta} (\Hopf) \subseteq \K^{(\Sigma)} \otimes \Hopf, \quad \text{with }  \K^{(\Sigma)} \text{ the vector space with base } \Sigma
      \end{displaymath}
  One can prove that (commutative) right-handed combinatorial Hopf algebras are closely connected to preLie algebras and the antipode is given by a Zimmermann type forest formula (cf.\ \cite[Theorem 8]{MP15}). 
 \end{setup}

 Hopf algebras occuring in the renormalisation of quantum field theories are typically right-handed Hopf algebras (such as the Connes-Kreimer Hopf algebra, Example \ref{ex: CKHopf}). 
 
In the following we fix a control pair $((\Hopf, \Sigma), (\Gr{n})_{n\in \N})$ with convex growth family and assume that $(\Hopf, \Sigma)$ is an right-handed combinatorial Hopf algebra.  
  The Lie group of controlled characters $\controlledChar{\Hopf}{B}$ will always be constructed with respect to such a pair, and some commutative Banach algebra $B$.
 Before we begin, recall the type of differential equation we wish to solve.

 \begin{setup}[Lie type differential equations on controlled character groups]
 As discussed in the introduction we are interested in solutions of the differential equation 
 \begin{displaymath}
  \begin{cases}
   \gamma'(t)&= \gamma(t).\eta(t)\\ \gamma(0) &= \one
  \end{cases}
 \end{displaymath}
  where $\eta \in C ([0,1],\controlledInfChar{\Hopf}{B})$ and the dot means right multiplication in the tangent Lie group $T\controlledChar{\Hopf}{B}$.   
 By Lemma \ref{lem: sl_in_full} the map $\iota_{\Hopf, B} \colon \controlledChar{\Hopf}{B} \rightarrow \Char{\Hopf}{B},\ \phi \mapsto \phi$ is a Lie group morphism.
 Its derivative $\Lf (\iota_{\Hopf, B}) \colon \Lf (\controlledChar{\Hopf}{B}) \rightarrow \Lf (\Char{\Hopf}{B})$ is the canonical inclusion of $\ellInftyLimit{\Hopf}{B}$ into $B^\Hopf$.
 As a consequence of \cite[Lemma 10.1]{HG15reg} if $\eta \in C^k ([0,1] , \controlledInfChar{\Hopf}{B})$ admits a $C^{k+1}$-evolution $\gamma\colon [0,1] \rightarrow \controlledChar{\Hopf}{B}$, then
  \begin{equation}\label{eq: evol:eq}
   \iota_{\Hopf, B} \circ \gamma = \Evol_{\Char{\Hopf}{B}} (\Lf (\iota_{\Hopf, B} ) \circ \eta).
  \end{equation}
  Thus from \cite[Step 1 in the proof of Theorem 2.11]{BDS16} we see that the differential equation for regularity can be rewritten for $\eta \in C([0,1], \controlledInfChar{\Hopf}{B})$ as 
  \begin{equation}\label{eq: LTR}
   \begin{cases}
   \gamma'(t)&= \gamma(t) \star \eta(t)\\ \gamma(0) &= \one
  \end{cases}
  \end{equation}
  (this can also be deduced directly from the submanifold structure).
  \end{setup}

  We will now prove that for certain Hopf algebras, the solution of the equation \eqref{eq: LTR} factors through the controlled character group if the initial curve $\eta$ takes its values in the Lie algebra of controlled characters.
  However, our proof depends on a certain estimate on the growth of the convolution product (we discuss this in Remark \ref{rem: lineargrowth} below). To formulate the growth estimate, we define the elementary coproduct.
  
  \begin{defn}
  For a right-handed Hopf algebra $(\Hopf, \Sigma)$ define 
  \begin{displaymath}
   \elcopro (\tau) := \sum_{\substack{
                                   \alpha , \beta \in \Sigma
                                  }} c_{\alpha ,\beta, \tau}\alpha \otimes \beta \quad \tau \in \Sigma.
  \end{displaymath}
  the \emph{right elementary coproduct}. Here the right-hand side is given by the terms in the basis expansion of the reduced coproduct $\Delta$ of the Hopf algebra. 
  Since the Hopf algebra is right-handed, the elementary coproduct restricts the terms only by imposing a condition on the second component of the tensor products. 
  Note that the right elementary coproduct will in general contain less terms than the reduced coproduct since we restrict in the second component to elements in the alphabet $\Sigma$.
  \end{defn}
 
 \begin{defn}[(RLB) Hopf algebra]
  We call a right-handed combinatorial Hopf algebra $(\Hopf, \Sigma)$ \emph{right-hand linearly bounded} (or \emph{(RLB)}) \emph{Hopf algebra} if there are constants $a,b > 0$ such that the elementary coproduct satisfies a uniform $\ell^1$-estimate 
 \begin{equation}\label{eq: linear:est}
   \norm{\elcopro  (\tau)}_{\ell^1} = \sum_{\substack{
                                  |\alpha | + |\beta| = |\tau| \\  \alpha , \beta \in \Sigma
                                  }} |c_{\alpha ,\beta, \tau}| \leq (a|\tau|+b) \quad \forall \tau \in \Sigma.
  \end{equation}
 \end{defn}
 For example, the Connes-Kreimer algebra (see Example \ref{ex: CKHopf}) is a (RLB) Hopf algebra, whereas the Fa\`{a} di Bruno algebra (cf.\ Example \ref{ex_faadiBruno}) is not of this type.
 We need a version of Gronwall's inequality (see \cite[Lemma 2.7]{MR2961944}, \cite[Lemma 1.1.24(a)]{dahmen2011}):
 \begin{lem}[Gronwall]															\label{lem_gronwall}
  Let $\smfunc{h}{[0,1]}{\left[ 0,+\infty \right[}$ be continuous, $A,B\geq0$ constant with
  \[
   h(t)\leq A + B \int_0^t h(s) \di s\quad \text{ for all }t\in[0,1].
  \]
  Then
  $
   h(t) \leq A e^{tB}\quad \text{ for all }t\in[0,1].
  $
 \end{lem}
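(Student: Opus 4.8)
The plan is to reduce the integral inequality to a differential one by treating the right-hand side as an auxiliary function. Define $\smfunc{H}{[0,1]}{[0,\infty[}$ by $H(t) \coloneq A + B\int_0^t h(s)\,\di s$. Since $h$ is continuous, the fundamental theorem of calculus shows that $H$ is of class $C^1$ with $H'(t) = B\,h(t)$; moreover the hypothesis reads $h(t) \leq H(t)$ for all $t \in [0,1]$, so that $H'(t) = B\,h(t) \leq B\,H(t)$, using $B \geq 0$.

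Next I would apply the standard integrating-factor argument. Set $g(t) \coloneq e^{-tB}\,H(t)$; this is again $C^1$ and
\[
 g'(t) = e^{-tB}\bigl(H'(t) - B\,H(t)\bigr) \leq 0,
\]
so $g$ is non-increasing on $[0,1]$. Hence $g(t) \leq g(0) = H(0) = A$ for every $t \in [0,1]$, which upon multiplying by $e^{tB}$ gives $H(t) \leq A\,e^{tB}$. Combining this with $h(t) \leq H(t)$ yields the claimed bound $h(t) \leq A\,e^{tB}$ for all $t \in [0,1]$.

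There is essentially no obstacle here, as the statement is classical: no case distinction on whether $A = 0$ or $B = 0$ is needed, since the argument is uniform in these constants, and the only analytic input is the continuity of $h$, which is exactly what makes $H$ differentiable so that the fundamental theorem of calculus applies. (Alternatively, one could iterate the inequality $h(t) \leq A + B\int_0^t h(s)\,\di s$ repeatedly to obtain the partial sums of $A\,e^{tB}$ together with a remainder term tending to $0$; but the integrating-factor proof above is shorter.)
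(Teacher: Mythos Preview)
Your proof is correct and is the standard integrating-factor argument for Gronwall's inequality. The paper does not actually give its own proof of this lemma; it simply cites external references (\cite[Lemma 2.7]{MR2961944} and \cite[Lemma 1.1.24(a)]{dahmen2011}), so there is nothing to compare against beyond noting that your argument is exactly the classical one.
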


 \begin{prop}													\label{prop_semiregularity}
  If $(\Hopf, \Sigma)$ is a (RLB) Hopf algebra and $((\Hopf, \Sigma), (\Gr{n})_{n\in \N})$ is a control pair, then $\controlledChar{\Hopf}{B}$ is $C^0$-semiregular.
 \end{prop}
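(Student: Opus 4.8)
The plan is to transport the problem to the full character group $\Char{\Hopf}{B}$, where $C^0$-regularity is already available by \cite[Theorem B]{BDS16}, and then to show by a quantitative estimate that the resulting evolution stays inside $\controlledChar{\Hopf}{B}$ whenever the initial datum $\eta$ takes values in $\controlledInfChar{\Hopf}{B}$. Concretely, fix $\eta \in C([0,1],\controlledInfChar{\Hopf}{B})$. Since $\controlledInfChar{\Hopf}{B} = \bigcup_k (\InfChar{\Hopf}{B}\cap\ellInfty{k}{\Hopf}{B})$ and $[0,1]$ is compact with the limit being (by Proposition \ref{prop_properties_of_ellInftyLimit}(d), using convexity) a complete, compactly regular (LB)-space, the curve $\eta$ actually takes values in a single step: there is $k_1\in\N$ with $\eta([0,1])\subseteq \ellInfty{k_1}{\Hopf}{B}$ and $\sup_{t}\norm{\eta(t)|_M}_{\ell^\infty_{k_1}} =: L < \infty$. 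By \cite[Lemma 10.1]{HG15reg} and \eqref{eq: evol:eq}, there is a (unique) $C^1$-curve $\gamma = \Evol_{\Char{\Hopf}{B}}(\Lf(\iota_{\Hopf,B})\circ\eta)\colon[0,1]\to\Char{\Hopf}{B}$ solving \eqref{eq: LTR} in $\Char{\Hopf}{B}$. The claim reduces to showing $\gamma(t)\in\controlledChar{\Hopf}{B}$ for all $t$, and that $t\mapsto\gamma(t)$ is $C^1$ into $\controlledChar{\Hopf}{B}$ (which, by the submanifold structure from Proposition \ref{prop_characters_form_submanifold}, amounts to $\gamma|_\Sigma$ being a $C^1$-curve into $\ellInftyLimit{\Sigma}{B}$).

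The heart of the argument is a Gronwall estimate exploiting the (RLB) property. Since $\Hopf$ is right-handed, for $\tau\in\Sigma$ the reduced coproduct has the form $\overline\Delta(\tau)=\sum \alpha\otimes x$ with $\alpha\in\Sigma$ (up to scalars) and $x\in\Hopf$, so that the defining equation $\gamma'(t) = \gamma(t)\star\eta(t)$ evaluated on a generator $\tau\in\Sigma$ becomes, using $\eta(t)(1_\Hopf)=0$ and $\eta(t)(\text{products of length}\geq 2)=0$ (because $\eta(t)$ is an infinitesimal character, so vanishes on decomposable elements) together with $\gamma(t)(1_\Hopf)=1_B$,
\begin{displaymath}
 \gamma'(t)(\tau) = \eta(t)(\tau) + \sum_{\substack{|\alpha|+|\beta|=|\tau|\\ \alpha,\beta\in\Sigma}} c_{\alpha,\beta,\tau}\,\gamma(t)(\alpha)\,\eta(t)(\beta),
\end{displaymath}
i.e. only the \emph{right elementary coproduct} contributes. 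Integrating and taking norms in $B$, setting $h_n(t) := \sup\{\,\Bnorm{\gamma(t)(\tau)}/\Gr{k_1}(n) : \tau\in\Sigma_n\,\}$ (or better a suitably weighted quantity), and using \eqref{eq: linear:est} together with axiom \ref{axiom_H_Multi} (so that $\Gr{k_1}(|\alpha|)\Gr{k_1}(|\beta|)\leq\Gr{k_1}(|\tau|)$) yields a bound of the shape
\begin{displaymath}
 \Bnorm{\gamma(t)(\tau)} \leq \Gr{k_1}(|\tau|)\Bigl( L + (a|\tau|+b)\, L \int_0^t \tfrac{\Bnorm{\gamma(s)(\tau)}}{\Gr{k_1}(|\tau|)}\,\di s \Bigr)
\end{displaymath}
(more precisely one couples degree-$n$ generators only to strictly-lower-degree ones, so the system is lower-triangular and can be closed degree by degree, or handled uniformly by absorbing the factor $a|\tau|+b$). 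The point is that the constant $B$ in Gronwall's Lemma \ref{lem_gronwall} depends linearly on $|\tau|$, so the conclusion $\Bnorm{\gamma(t)(\tau)}\leq \Gr{k_1}(|\tau|)\,L\, e^{t(a|\tau|+b)L} \leq L e^{(a+b)L}\,e^{a|\tau|}\Gr{k_1}(|\tau|)$ produces only an \emph{extra exponential-in-degree factor} $e^{a|\tau|}$. By axiom \ref{axiom_H_Infinity} of the growth family there is $k_2\geq k_1$ with $2^n\Gr{k_1}(n)\leq\Gr{k_2}(n)$; iterating \ref{axiom_H_Infinity} finitely often (to beat $e^{a n}\leq 2^{\lceil a/\ln 2\rceil n}$) we find $k_3\in\N$ with $e^{an}\Gr{k_1}(n)\leq\Gr{k_3}(n)$ for all $n$. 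Hence $\sup_t\norm{\gamma(t)|_\Sigma}_{\ell^\infty_{k_3}}<\infty$, so $\gamma(t)\in\controlledChar{\Hopf}{B}$ for every $t$, with all values landing in the single Banach step $\ellInfty{k_3}{\Hopf}{B}$.

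It remains to promote this pointwise statement to $C^1$-regularity of the curve into $\controlledChar{\Hopf}{B}$, i.e. $C^0$-semiregularity. Here I would argue as follows: the right-hand side $\gamma(t)\star\eta(t)$, read through the submanifold chart $\res\colon\chi\mapsto\chi|_\Sigma$, is given on each generator by the displayed formula above, which is (for fixed finite-type truncation, or uniformly using the (RLB) bound) a continuous expression in $(t,\gamma|_\Sigma)$ valued in $\ellInfty{k_3}{\Sigma}{B}$; since $\gamma$ is already known to be continuous into this Banach step (continuity into the step, not merely pointwise, follows because the Gronwall estimate is uniform in $t$ and the difference quotients of $\gamma$ in $\Char{\Hopf}{B}$ satisfy the same type of estimate), its derivative $\gamma'(t)$, computed in $\Char{\Hopf}{B}$, in fact lies in and depends continuously on $t$ into $\ellInfty{k_3}{\Sigma}{B}\hookrightarrow\ellInftyLimit{\Sigma}{B}$. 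Therefore $\gamma$ is a $C^1$-curve in the submanifold $\controlledChar{\Hopf}{B}$ solving \eqref{eq: LTR} there, and uniqueness is inherited from uniqueness in $\Char{\Hopf}{B}$ via the injective Lie group morphism $\iota_{\Hopf,B}$. This establishes $C^0$-semiregularity.

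The main obstacle is the \textbf{uniform control of the Gronwall constant}: one must ensure that coupling a degree-$n$ generator through the right elementary coproduct introduces a factor growing at most like $a n + b$ (this is exactly what (RLB) buys, and why right-handedness is essential — otherwise the full reduced coproduct would feed back products of generators and the estimate would not close), and then that the growth family can absorb the resulting $e^{an}$ via finitely many applications of \ref{axiom_H_Infinity}. A secondary technical point is to make the passage from "pointwise in $t$ bounded in $\ellInfty{k_3}{}$" to "continuous, indeed $C^1$, into $\ellInfty{k_3}{}$" rigorous; this is handled by running the same Gronwall estimate on difference quotients and on $\gamma(t)-\gamma(t')$, using that the full-character-group evolution from \cite{BDS16} is already $C^1$.
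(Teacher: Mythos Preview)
Your proposal is correct and follows essentially the same route as the paper: lift to the full character group via $\iota_{\Hopf,B}$, use compact regularity to land $\eta$ in a single step, exploit right-handedness so that only the elementary coproduct survives when evaluating $\gamma\star\eta$ on generators, run Gronwall with the (RLB) linear-in-degree constant, and absorb the resulting $e^{cn}$ factor by iterating \ref{axiom_H_Infinity}. Two minor refinements the paper makes: it normalises so that $\norm{\eta(t)}_{\ell^\infty_k}\leq 1$ via Proposition~\ref{prop_properties_of_ellInftyLimit}(c) (eliminating your constant $L$), and it defines $h_n(t)=\sup_{|\tau|\leq n}\Bnorm{\gamma(t)(\tau)}/\Gr{k}(|\tau|)$ (supremum over degrees $\leq n$, not $=n$), which is exactly what makes the integral inequality close without the lower-triangular detour you sketch. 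For the $C^1$ upgrade the paper is more direct than your proposed Gronwall-on-differences: once $\gamma$ is known to be bounded in $\ellInfty{k_2}{\Hopf}{B}$, one estimates $\norm{\gamma(t)-\gamma(t_0)}_{\ell^\infty_{k_3}}\leq\int_{t_0}^t\norm{\gamma(s)\star\eta(s)}_{\ell^\infty_{k_3}}\di s\leq 2|t-t_0|$ directly (choosing $k_3$ so that $(an+b)\Gr{k_2}(n)\leq\Gr{k_3}(n)$), and then invokes \cite[Lemma 7.10]{HG15reg} to conclude that continuity of $\gamma$ into the step already forces $\gamma$ to be the $C^1$-evolution in $\controlledChar{\Hopf}{B}$.
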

 
 \begin{proof}
  Recall from \cite{BDS16} that the differential equation \eqref{eq: LTR} has a unique solution in $\Char{\Hopf}{B}$ (since this group is $C^0$-semiregular).  
  We will now prove that for every fixed $\eta \in C([0,1],\controlledInfChar{\Hopf}{B})$ the solution $\gamma \colon [0,1] \rightarrow \Char{\Hopf}{B}$ to \eqref{eq: LTR} in $\Char{\Hopf}{B}$ factors through the controlled characters.
  Since $\eta ([0,1]) \subseteq \controlledInfChar{\Hopf}{B}$ is compact and the Lie algebra is a compactly regular inductive limit (cf.\ Lemma \ref{lem_creg_lim}), we deduce from Proposition \ref{prop_properties_of_ellInftyLimit} (c) that there is $k \in \N$ with $\eta ([0,1]) \subseteq \oBallin{1}{\ell^\infty_k}{0}$. 
  Now define 
  \begin{displaymath} 
   h_n (t) := \sup_{\tau \in \Sigma, |\tau| \leq n} \frac{\norm{\gamma (t)(\tau)}}{\Gr{k} (|\tau|)} \quad \quad , t \in [0,1], \ n \in \N
  \end{displaymath}
  Our aim is to find an upper bound for $h_n$ growing at most exponentially in $n$ (and thus slower then any growth family).
    Choose $a,b>0$ such that \eqref{eq: linear:est} is satisfied.
  Recall that $\gamma' = \gamma \star \eta \in \Hom (\Hopf, B)$ by \eqref{eq: LTR}.
  Hence we can integrate this formula and use the properties of (infinitesimal) characters to obtain the following estimate for $n\in \N$: 
  \begin{align*}
   h_n (t) &\stackrel{\hphantom{\eqref{eq: InfChar:char} , \eqref{eq char:char}}}{=} \sup_{|\tau| \leq n} \frac{\norm{\gamma (t)(\tau)}}{\Gr{k} (|\tau|)}  
										  = \sup_{|\tau| \leq n} \frac{\norm{\int_0^t \gamma (s) \star \eta(s) \di s}}{\Gr{k} (|\tau|)}\\ 
	&\stackrel{\eqref{eq: InfChar:char} , \eqref{eq char:char}}{=} \sup_{|\tau| \leq n} \frac{1}{\Gr{k} (|\tau|)} \left\Vert \int_0^t \Bigl(  \gamma (s)(\tau) \underbrace{\eta (s)(\emptyset)}_{=0} + \underbrace{\gamma (s)(\emptyset)}_{=1_B} \eta (s)(\tau) + \right. \\
	& \left.\vphantom{\int_0^t}\hspace{4cm}+ \sum_{\substack{|\alpha | + |\beta| = |\tau| \\  \alpha \in \Sigma, \beta \in \Sigma^* \setminus \{\emptyset\}}} c_{\alpha, \beta,\tau} \gamma (s)(\alpha) \eta(s) (\beta) \Bigr) \di s\right\Vert \\
	&\stackrel{\eqref{eq: InfChar:char} , \eqref{eq char:char}}{=}  \sup_{|\tau| \leq n}  \frac{1}{\Gr{k} (|\tau|)}  \int_0^t \Bigl( \underbrace{\norm{\eta (s)(\tau)}}_{\leq \Gr{k} (|\tau|)}\di s +\\ 
	&\hspace{2cm} + \int_0^t\sum_{\substack{|\alpha |  + |\beta| = |\tau|,\\ \alpha, \beta \in \Sigma }}\hspace{-1em}  |c_{\alpha, \beta,\tau}| \underbrace{\norm{\gamma (s) (\alpha)}}_{\leq h_n (s) \omega_{k} (|\alpha|)} \underbrace{ \norm{\eta(s) (\beta)}}_{\leq \omega_k (|\beta|)} \Bigr) \di s   \\
	&\stackrel{\hphantom{\eqref{eq: InfChar:char} , \eqref{eq char:char}}}{\leq} 1 + \sup_{|\tau| \leq n} \sum_{\substack{|\alpha |  + |\beta| = |\tau|,\\ \alpha, \beta \in \Sigma}} |c_{\alpha, \beta,\tau}|  \frac{\omega_k (|\beta|) \omega_k (|\alpha|)}{\Gr{k} (|\tau|)} \int_0^t h_n (s) \di s \\
	&\stackrel{\eqref{eq: linear:est}}{\leq} 1 + \sup_{|\tau| \leq n} (a|\tau| +b) \int h_n(s) \di s \leq 1 + (an+b) \int_0^t h_{n}(s) \di s
  \end{align*}
  Working in a right-handed Hopf algebra ensured that there are no higher powers of $h_n$ in the formulae.
  By the Gronwall inequality (Lemma \ref{lem_gronwall}) this leads to 
  \begin{displaymath}
   h_n (t) \leq 1\cdot e^{(an+b)t} < \left(2^n\right)^{2a+2b}.
  \end{displaymath}
  By $(2a+2b)$ times applying \ref{axiom_H_Infinity} we obtain a $k_2>k$ such that 
  \[
   \Gr{k}(n) \left(2^n\right)^{2a+2b}\leq \Gr{k_2}(n)\quad \text{ for all }n\in\N.
  \]
  For a given $t\in[0,1]$ and $\tau\in \Sigma$ we deduce
  \begin{align*}
   \norm{\gamma(t)(\tau)}	&	=	\Gr{k}(\abs{\tau}) h_{\abs{\tau}}(t)
 			         	\leq	\Gr{k}(\abs{\tau})\left(2^{\abs{\tau}}\right)^{2a+2b}
  			         	\leq	\Gr{k_2}(\abs{\tau}).
  \end{align*}
  This shows that $\gamma$ takes only values in the (unit ball of the) space $\ellInfty{k_2}{\Hopf}{B}$.
  Now apply \ref{axiom_H_Infinity} again to find $k_3 > k_2$ with $(an+b)\Gr{k_2}(n) < \Gr{k_3} (n)$ for all $n \in \N$.
  Then $\smfunc{\gamma}{[0,1]}{ \oBallin{1}{\ellInfty{k_3}{\Hopf}{B}}{0} }$ is continuous due to the following estimate:
  \begin{align*}
   &\norm{\gamma(t)-\gamma(t_0)}_{\ell^\infty_{k_3}}	  	\leq 	\int_{t_0}^t \norm{\gamma'(s) }_{\ell^\infty_{k_3}}\di s 	=	\int_{t_0}^t \norm{\gamma(s)\star\eta(s) }_{\ell^\infty_{k_3}}\di s\\
    \leq&	 \sup_{\tau \in \Sigma} \int_{t_0}^t \left(\underbrace{\norm{\eta(s)(\tau)}_{\ell^\infty_{k_3}}}_{\leq 1} + \sum_{\substack{|\alpha |  + |\beta| = |\tau|,\\ \alpha, \beta \in \Sigma }} |c_{\alpha,\beta, \tau}| \frac{\Gr{k_2} (|\tau|)}{\Gr{k_3} (|\tau|)}\underbrace{\norm{\gamma(s) }_{\ell^\infty_{k_2}}}_{\leq 1} \underbrace{\norm{\eta(s) }_{\ell^\infty_{k_2}}}_{\leq 1} \right)\di s\\
    \leq &	\sup_{\tau \in \Sigma} \int_{t_0}^t \left( 1 + \frac{(a|\tau|+b)	\Gr{k_2} (|\tau|)}{\Gr{k_3} (|\tau|)} \right) \di s	 \leq 2 \abs{t-t_0}.
  \end{align*}
  By \cite[Lemma 7.10]{HG15reg} continuity of $\gamma$ suffices to conclude that $\gamma$ is $C^1$ and that $\Evol_{\controlledChar{\Hopf}{B}} (\eta) = \gamma$. 
  This concludes the proof.
 \end{proof}

We may now show that the group of controlled characters is even $C^0$-regular:
 
 \begin{thm}[Regularity of the group of controlled characters]							\label{thm_regularity}
  Assume that $(\Hopf, \Sigma)$ is an (RLB) Hopf algebra and $((\Hopf, \Sigma), (\Gr{n})_{n\in \N})$ a control pair.
  Then $\controlledChar{\Hopf}{B}$ is $C^0$-regular (with an analytic evolution map) and in particular regular in Milnor's sense.
 \end{thm}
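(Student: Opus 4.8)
The plan is to strengthen the $C^0$-semiregularity established in Proposition \ref{prop_semiregularity} to $C^0$-regularity with an analytic evolution map; since a $C^0$-regular Lie group is $C^r$-regular for every $r\in\N_0\cup\{\infty\}$, this in particular yields regularity in Milnor's sense. As complex analytic maps are real analytic and, by Proposition \ref{prop: complexification}, $\controlledChar{\Hopf_\C}{B_\C}$ is the complexification of $\controlledChar{\Hopf}{B}$, I would first reduce the real case to the case $\K=\C$ (the real evolution map being the restriction of the complex one to the real forms); so assume $\K=\C$. By Proposition \ref{prop_semiregularity} the evolution $\Evol(\eta)$ exists for every $\eta\in C^0([0,1],\controlledInfChar{\Hopf}{B})$, and by uniqueness it is the $\Char{\Hopf}{B}$-evolution of the associated curve of infinitesimal characters (this is \eqref{eq: evol:eq}); it remains to prove that $\evol\colon C^0([0,1],\controlledInfChar{\Hopf}{B})\to\controlledChar{\Hopf}{B}$, $\eta\mapsto\Evol(\eta)(1)$, is $\C$-analytic.

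Second, I would pass to the steps of the inductive limit. Via the isomorphism $\res$ of Proposition \ref{prop_Lie_algebra_of_controlled_characters} identify $\controlledInfChar{\Hopf}{B}$ with the (LB)-space $\ellInftyLimit{\Sigma}{B}=\varinjlim_k\ellInfty{k}{\Sigma}{B}$, which is compactly regular since the growth family is convex (Lemma \ref{lem_creg_lim}). Consequently $C^0([0,1],\controlledInfChar{\Hopf}{B})\cong\varinjlim_k C^0([0,1],\ellInfty{k}{\Sigma}{B})$ as locally convex spaces, and by the inductive-limit techniques of \cite{dahmen2010} (cf.\ \cite[Corollary 2.1, Theorem A]{dahmen2010}) it suffices to prove that for each fixed $k\in\N$ the restriction $\evol_k\colon C^0([0,1],\ellInfty{k}{\Sigma}{B})\to\controlledChar{\Hopf}{B}$ is $\C$-analytic. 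Using that for a uniform subdivision $0=t_0<\dots<t_N=1$ one has $\evol_k(\eta)=\evol_k(\tilde\eta_1)\cdots\evol_k(\tilde\eta_N)$, where $\tilde\eta_i(s):=(t_i-t_{i-1})\,\eta(t_{i-1}+s(t_i-t_{i-1}))$ depends continuous-linearly on $\eta$ and lies in the open unit ball of $C^0([0,1],\ellInfty{k}{\Sigma}{B})$ once $N>\norm{\eta}_\infty$, and that multiplication in $\controlledChar{\Hopf}{B}$ is analytic, it is enough to show $\evol_k$ is $\C$-analytic on the open unit ball.

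Third — and this is where the $(RLB)$-hypothesis does its work — I would keep the solution inside a single Banach step, analytically in $\eta$. For $\eta$ in the open unit ball the estimates in the proof of Proposition \ref{prop_semiregularity} (the Gronwall argument based on \eqref{eq: linear:est}) already produce a $k_2=k_2(k,a,b)$, independent of $\eta$, with $\gamma_\eta:=\Evol(\hat\eta)$ taking all its values in the open unit ball of $\ellInfty{k_2}{\Hopf}{B}$; in particular $\eta\mapsto\gamma_\eta(1)$ is bounded into this Banach space on the unit ball. On the other hand, $\evol_{\Char{\Hopf}{B}}$ is $\C$-analytic into $\mathrm{Hom}_\C(\Hopf,B)$ with the topology of pointwise convergence (\cite{BDS16}), evaluation $\pi_w$ at a basis element $w$ of $\Hopf$ is continuous linear, and $\eta\mapsto\hat\eta$ is continuous linear; hence $\eta\mapsto\gamma_\eta(1)(w)=\pi_w(\evol_{\Char{\Hopf}{B}}(\hat\eta))$ is $\C$-analytic for every $w$. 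Since the family $\{\phi\circ\pi_w : \phi\in B',\ w\text{ a basis element of }\Hopf\}$ separates the points of $\ellInfty{k_2}{\Hopf}{B}$, the weak-analyticity criterion (Proposition \ref{prop: avs:weaka}), together with the uniform bound, gives that $\eta\mapsto\gamma_\eta(1)$ is $\C$-analytic into the Banach space $\ellInfty{k_2}{\Hopf}{B}$ (should continuity be needed as an input, it follows from a Gronwall estimate on $\gamma_{\eta_1}-\gamma_{\eta_2}$ as in the proof of Proposition \ref{prop_semiregularity}). Composing with the continuous linear inclusion $\ellInfty{k_2}{\Hopf}{B}\hookrightarrow\ellInftyLimit{\Hopf}{B}$ and recalling that $\controlledChar{\Hopf}{B}$ is a closed split $\C$-analytic submanifold of $\ellInftyLimit{\Hopf}{B}$ (Theorem \ref{thm: controlled Lie group}) containing all the $\gamma_\eta(1)$, we conclude $\evol_k$ is $\C$-analytic on the unit ball, which finishes the argument.

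I expect the main obstacle to be the content of the third step: confining the solution of \eqref{eq: LTR} uniformly to one Banach step $\ellInfty{k_2}{\Hopf}{B}$ as $\eta$ ranges over a ball — exactly where right-handedness and the linear $\ell^1$-bound \eqref{eq: linear:est} are indispensable, since they prevent higher powers of the auxiliary quantity $h_n$ and feed Gronwall's inequality — and then upgrading the evident pointwise-in-$w$ analyticity inherited from $\Char{\Hopf}{B}$ to honest Banach-space-valued analyticity. The identification $C^0([0,1],\varinjlim_k\ellInfty{k}{\Sigma}{B})=\varinjlim_k C^0([0,1],\ellInfty{k}{\Sigma}{B})$ used in the second step is comparatively routine once compact regularity (Lemma \ref{lem_creg_lim}) is in hand.
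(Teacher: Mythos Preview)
Your proposal is correct and follows essentially the same approach as the paper: reduce to $\K=\C$ via complexification, use the semiregularity estimates to confine $\gamma_\eta$ to a fixed Banach step uniformly over a unit ball of $\eta$'s, combine the pointwise analyticity inherited from $\evol_{\Char{\Hopf}{B}}$ with the weak-analyticity criterion (Proposition~\ref{prop: avs:weaka}), and pass to the inductive limit via \cite[Theorem A]{dahmen2010}. Two minor remarks: the continuity-in-$\eta$ Gronwall estimate you defer is not literally in Proposition~\ref{prop_semiregularity} (that one only gives continuity in $t$) but is an analogous computation the paper carries out in detail, and your subdivision trick is unnecessary since Proposition~\ref{prop_properties_of_ellInftyLimit}(c) plus compact regularity already make $C^0([0,1],\ellInftyLimit{\Sigma}{B})$ the union of the open unit balls of the steps, so \cite[Theorem A]{dahmen2010} applies directly.
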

 \begin{proof}
  We will assume throughout this proof that $\K=\C$. 
  The regularity of a group of real controlled characters can be deduced from the complex case by \cite[Corollary 9.10]{HG15reg} since Proposition \ref{prop: complexification} shows that the complexification of a group of controlled characters is the complex group of controlled characters.
  In Proposition \ref{prop_semiregularity} we have shown that for every $\eta\in C([0,1],\controlledInfChar{\Hopf}{B})$ there is $\gamma_\eta=\Evol(\eta)\in C^1([0,1],\InfChar{\Hopf}{B}$ such that   $\gamma_\eta' = \gamma_\eta \star \eta$. It remains to establish smoothness of
  \[
   \func{\evol}{\controlledInfChar{\Hopf}{B}}{\controlledChar{\Hopf}{B}}{\eta}{\gamma_\eta(1).}
  \]

  \paragraph{Step 1: Working in charts}	
  The $C^\omega_\K$-manifold $\controlledChar{\Hopf}{B}$ is $C^\omega_\K$-diffeomorphic to the (LB)-space $\ellInftyLimit{\Sigma}{B}$ via
  \[
   \func{\res_G}{\controlledChar{\Hopf}{B}}{\ellInftyLimit{\Sigma}{B}}{\phi}{\phi|_\Sigma},
  \]
  by Proposition \ref{prop_characters_form_submanifold}.
  Furthermore, Proposition \ref{prop_Lie_algebra_of_controlled_characters} entails that the locally convex space $\controlledInfChar{\Hopf}{B}$ is topologically isomorphic to the (LB)-space $\ellInftyLimit{\Hopf}{B}$ via
  \[
   \func{\res_\g}{\controlledInfChar{\Hopf}{B}}{\ellInftyLimit{\Sigma}{B}}{\psi}{\psi|_\Sigma.}
  \]
  Using these identifications it remains to establish smoothness of the map
  \[
   \func{\Phi}{C([0,1],\ellInftyLimit{\Sigma}{B})}{\ellInftyLimit{\Sigma}{B}}{\eta}{ \res_G( \evol(\res_\g^{-1}  \circ \eta )). }
  \]

  \paragraph{Step 2: The auxiliary maps $\Phi_{k,\ell}$ for $\ell \gg k$.}	
  Let $k\in\N$.
  In the proof of Proposition \ref{prop_semiregularity} we have seen that there is a $\ell>k$ such that
  \[
      \sup_{t\in[0,1]}\norm{ \gamma_\eta(t) }_{\ell^\infty_{\ell}} \leq 1 \quad \text{ whenever} \quad \sup_{t\in[0,1]}\norm{\eta(t)}_{\ell^\infty_k} \leq 1.
  \]
  Note that the isomorphism $\smfunc{\res_\g}{\controlledInfChar{\Hopf}{B}}{\ellInftyLimit{\Sigma}{B}}$ is isometric with respect to the $\ell^\infty_k$-norms since an infinitesimal character is zero on all words of length different from $1$ (as the Hopf algebra is connected).
  This means that we obtain a function $\smfunc{\eta}{[0,1]}{\oBallin{1}{\ellInfty{k}{\Sigma}{B}}{0}}$ via
  \[
   \func{\Phi_{k,\ell}}{\oBallin{1}{C([0,1],\ellInfty{k}{\Sigma}{B})}{0}}{\ellInfty{\ell}{\Sigma}{B}}{\eta}{\Phi(\eta)=\res_G( \evol(\res_\g^{-1}  \circ \eta )),}
  \]
  
  \paragraph{Step 3: $\Phi_{k,\ell}$ is continuous for $\ell \gg k$.}	
  Let now $k\in\N$ be fixed. We have seen that there is a number $\ell_1>k$ such that $\Phi_{k,\ell_1}$ is well-defined. 
  Let again $a,b>0$ be the constants for $\Hopf$ such that \eqref{eq: linear:est} holds. 
  Now apply \ref{axiom_H_Infinity} once more to find $\ell_2 > \ell_1$ with 
  \[
   (an+b+1)e^{an+b}\Gr{\ell_1}(n) \leq \Gr{\ell_2} (n) \quad \text{ for all }n \in \N.
  \]
  We will show that $\smfunc{\Phi_{k,\ell_2}}{\oBallin{1}{C([0,1],\ellInfty{k}{\Sigma}{B})}{0}}{\ellInfty{\ell_2}{\Sigma}{B}}$ is continuous.
  To this end, let $\epsilon>0$ and choose $\delta:=\epsilon$.
  Fix $\eta_1,\eta_2\in \oBallin{1}{C([0,1],\ellInfty{k}{\Sigma}{B})}{0}$ with
  \[
   \sup_{t\in[0,1]}\norm{\eta_1(t)-\eta_2(t)}_{\ell^\infty_k}< \delta=\epsilon.
  \]
  Consider the Lie group valued curves $\gamma_i := \Evol_{\controlledChar{\Hopf}{B}} (\res_\g^{-1}\circ \eta_i)$.
  Continuity of $\Phi_{k,\ell_2}$ holds if $\norm{\gamma_1-\gamma_2}_{\ell^\infty_{\ell_2}}\leq \epsilon$.
  Similar to the proof of Proposition \ref{prop_semiregularity} define
  \[
   g_n (t) := \sup_{\tau \in \Sigma, |\tau| \leq n} \frac{\norm{\gamma_1(t)(\tau) - \gamma_2(t)(\tau)  }}{\Gr{\ell_1} (|\tau|)} \quad \quad , t \in [0,1], \ n \in \N
  \]
  Now for a $t\in[0,1]$ we have the estimate:
  \newcommand{\bigsup}{\sup_{\tau \in \Sigma  |\tau| \leq n}}
  \newcommand{\bigsum}{\sum_{\substack{|\alpha | + |\beta| = |\tau| \\  \alpha,\beta \in \Sigma}}}
  \begin{align*}
   g_n(t)	  &	=	\bigsup \frac{1}{\Gr{\ell_1} (|\tau|)} \norm{\int_0^t \left(  \gamma_1'(s)(\tau) - \gamma_2'(s)(\tau) \right) \di s }
         	\\&	=	\bigsup \frac{1}{\Gr{\ell_1} (|\tau|)} \norm{\int_0^t \left(  (\gamma_1(s)\star\eta_1(s))(\tau) - (\gamma_1(s)\star\eta_1(s))(\tau) \right) \di s }
		\\&	= 	\bigsup \frac{1}{\Gr{\ell_1} (|\tau|)} \biggl\Vert \int_0^t\Bigl(  \eta_1 (s)(\tau) - \eta_2(s)(\tau)  +
		\\& 	\phantom{=\bigsup}  + \bigsum c_{\alpha, \beta,\tau} 
						\left(\gamma_1 (s)(\alpha) \eta_1(s) (\beta) -\gamma_2 (s)(\alpha) \eta_2(s) (\beta)\right)\Bigr)\di s \biggr\Vert \\
		\\&	\leq 	\bigsup \frac{1}{\Gr{\ell_1} (|\tau|)} \int_0^t \underbrace{\norm{   \eta_1 (s)(\tau) - \eta_2(s)(\tau)  }}_{\leq \epsilon\cdot \Gr{k}(\abs{\tau})\leq \epsilon\cdot \Gr{\ell_1}(\abs{\tau})}\di s +
		\\& 	\phantom{=\bigsup}  + \bigsum \abs{c_{\alpha, \beta,\tau} }
						\int_0^t \norm{\gamma_1 (s)(\alpha) \eta_1(s) (\beta) -\gamma_2 (s)(\alpha) \eta_2(s) (\beta) } \di s\\
		\\&	\leq	\epsilon  +  \bigsup\bigsum\abs{c_{\alpha, \beta,\tau} } \frac{1}{\Gr{\ell_1} (|\tau|)}
			  \int_0^t \bigl(	\underbrace{\norm{\gamma_1(s)(\alpha)}}_{\leq \Gr{\ell_1}(\abs{\alpha})}\underbrace{\norm{\eta_1(s)(\beta)-\eta_2(s)(\beta)}}_{\leq \epsilon\cdot \Gr{k}(\abs{\beta})\leq \epsilon\cdot \Gr{\ell_1}(\abs{\beta})} +
		\\&	\phantom{\leq	\epsilon  +  \bigsup\bigsum\abs{c_{\alpha, \beta,\tau} }
			  \int_0^t \bigl(}				      
		  + \underbrace{\norm{\gamma_1(s)(\alpha)-\gamma_2(s)(\alpha)}}_{\leq g_n(s)\cdot \Gr{\ell_1}(\abs{\alpha})}\underbrace{\norm{\eta_2(s)(\beta)}}_{\leq \Gr{k}(\abs{\beta})\leq \Gr{\ell_1}(\abs{\beta})}\bigr)\di s
		\\&	\leq \epsilon + \bigsup\bigsum\abs{c_{\alpha, \beta,\tau} } \underbrace{\frac{\Gr{\ell_1}(\abs{\alpha})\Gr{\ell_1}(\abs{\beta})}{\Gr{\ell_1}(\abs{\tau})}}_{\leq 1}
				\left(	\epsilon + \int_0^t g_n(s) \di s	\right)
		\\&	\leq \epsilon + \bigsup\underbrace{\bigsum\abs{c_{\alpha, \beta,\tau} }}_{\leq a \abs{\tau} +b} \left(	\epsilon + \int_0^t g_n(s) \di s	\right)
		\\&	\leq \epsilon( 1 + a n +b) + (an+b)  \int_0^t g_n(s) \di s.
 \end{align*}
 By Gronwall's inequality (Lemma \ref{lem_gronwall}) this implies
 \begin{align*}
  g_n(t) &\leq \epsilon( 1 + a n +b) e^{(an+b)t}
 \text{ and in particular} \\
   \sup_{\tau\in\Sigma}\frac{\norm{(\gamma_1(1)-\gamma_2(1))(\tau)}}{\Gr{\ell_2}(\abs{\tau})} 
  &\leq  \sup_{\tau\in\Sigma}\frac{g_n(1)\Gr{\ell_1}(\abs{\tau})}{\Gr{\ell_2}(\abs{\tau})} 
  \leq  \sup_{\tau\in\Sigma}\frac{\epsilon( 1 + a n +b) e^{(an+b)t}\Gr{\ell_1}(\abs{\tau})}{\Gr{\ell_2}(\abs{\tau})} 
  \leq  \epsilon
 \end{align*}
 which is what we had to show.

  \paragraph{Step 4: $\Phi_{k,\ell}$ is complex analytic.}	
 Fix $k\in\N$ and choose $\ell>k$ as in Step 3  such that $\Phi_{k,\ell}$ makes sense and is continuous.
 By Lemma \ref{lem: sl_in_full} the inclusion $\controlledChar{\Hopf}{B} \rightarrow \Char{\Hopf}{B}$
 is  a Lie group morphism.
 The group on the right is $C^0$-regular with an analytic evolution map (see \cite[Theorem 2.11]{BDS16}).
 This shows that $\Phi_{k,\ell}$ is analytic when regarded as a map into the full character group which is a submanifold of the locally convex space $B^\Hopf$. 
 Since the continuous linear point evaluations separate the points, we may apply Proposition \ref{prop: avs:weaka} to conclude that $\Phi_{k,\ell}$ is $\C$-analytic.

  \paragraph{Step 5: $\Phi$ is complex analytic.}	
  As a consequence of Step 2 and Step 4 for each $k\in\N$
 \[
  \func{\Phi_k}{\oBallin{1}{C([0,1],\ellInfty{k}{\Sigma}{B})}{0}}{\ellInftyLimit{\Sigma}{B}}{\eta}{\Phi(\eta)=\res_G( \evol(\res_\g^{-1}  \circ \eta )).}
 \]
 is complex analytic and bounded. So by \cite[Theorem A]{dahmen2010} the map $\Phi$ is complex analytic. Summing up, this shows that the evolution map is complex analytic.
 \end{proof}

 \begin{rem}\label{rem: lineargrowth}
  Observe that Theorem \ref{thm_regularity} and Proposition \ref{prop_semiregularity} can be adapted to slightly more general situations. 
  The assumption that the Hopf algebra is an (RLB) Hopf algebra can be relaxed if more information on the growth bound is known.
  Namely, instead of the estimate \eqref{eq: linear:est} it suffices to require that the elementary coproduct of the right-handed combinatorial algebra $(\Hopf, \Sigma)$ satisfies for all $k \in \N$ the estimate 
  \begin{displaymath}
    \norm{\elcopro  (\tau)}_{\ell^1} = \sum_{\substack{
                                  |\alpha | + |\beta| = |\tau| \\  \alpha , \beta \in \Sigma
                                  }} |c_{\alpha ,\beta, \tau}| \leq \log \left( \frac{\Gr{\ell} (|\tau| )}{\Gr{k}(|\tau|)}\right) \quad \tau \in \Sigma, \text{ for suitable } \ell > k.
  \end{displaymath}
 Inserting this estimate in the proofs of Proposition \ref{prop_semiregularity} and Theorem \ref{thm_regularity}, one sees that they can be carried out without any further changes. 
 If the algebra is not an (RLB) algebra, our estimate indicates that functions in the growth family need to grow super exponentially fast (i.e.\ at least like $\exp \left( \lVert\elcopro  (\tau)\rVert_{\ell^1}\right)$).
 Thus the only growth family from Proposition \ref{prop: growth:fam} leading to regular Lie groups are $\Gr{k} (n) = k^n (n!)^k$ or $\Gr{k} (n) = k^{n^2}$.
 
 Observe that the estimates in the above proofs will in general be quite conservative. 
 Hence we conjecture that with better estimates, or a refinement of the techniques used in the proofs above, one should be able to obtain regularity for all Lie groups of controlled characters.
 \end{rem}

\section{Instructive examples of controlled character groups}\label{sect: examples}

In this section we discuss (controlled) character groups for certain well known combinatorial Hopf algebras. 
Analysis of these examples usually involves involved combinatorial estimates (cf.\ e.g.\ the computations following Example \ref{ex_faadiBruno}).
Such an analysis is in general beyond the scope of the present article. 
However some perspectives for research with application to numerical analysis and control theory is provided.

\subsection*{Examples from numerical analysis I: The tame Butcher group}
In this section we discuss (controlled) character groups which are inspired by application from numerical analysis. 
Namely, we consider groups which are related to the so called Butcher group and its generalisations from Lie-Butcher theory (cf.\ e.g.\ \cite{MR2790315}).
Special emphasis will be given to the power series solutions associated to elements in this group. 
This is due to the fact that these power series solutions were the motivation to consider the tame Butcher group in \cite{BS16}.   
We begin with the easiest example in this context, the character group of the so called Connes-Kreimer Hopf algebra\footnote{This Hopf algebra is most prominently studied in the Connes-Kreimer approach to perturbative renormalisation of quantum field theories, whence the name. We refer to \cite{MR2371808} for an account.} 
 \begin{nota}
 \begin{enumerate}
  \item A \emph{rooted tree} is a connected \emph{finite} graph without cycles with a distinguished node called the \emph{root}.
  We identify rooted trees if they are graph isomorphic via a root preserving isomorphism.

  Let $\RT$ be \emph{the set of all rooted trees} and write $\RT_0 := \RT \cup \{\emptyset\}$ where $\emptyset$ denotes the empty tree.
  The \emph{order} $|\tau|$ of a tree $\tau \in \RT_0$ is its number of vertices.
  \item An \emph{ordered subtree}\footnote{The term ``ordered'' refers to that the subtree remembers from which part of the tree it was cut.} of $\tau \in \RT_0$ is a subset $s$ of all vertices of $\tau$ which satisfies
    \begin{itemize}
     \item[(i)] \ $s$ is connected by edges of the tree $\tau$,
     \item[(ii)] \ if $s$ is non-empty, then it contains the root of $\tau$.
    \end{itemize}
   The set of all ordered subtrees of $\tau$ is denoted by $\OST (\tau)$.
   Further, $s_\tau$ denotes the tree given by vertices of $s$ with root and edges induced by $\tau$.
  \item A \emph{partition} $p$ of a tree $\tau \in \RT_0$ is a subset of edges of the tree.
 We denote by $\cP (\tau)$ the set of all partitions of $\tau$ (including the empty partition).
 \end{enumerate}
  Associated to $s \in \OST (\tau)$ is a forest $\tau \setminus s$ (collection of rooted trees) obtained from $\tau$ by removing the subtree $s$ and its adjacent edges.
  Similarly, to a partition $p \in \cP (\tau)$ a forest $\tau \setminus p$ is associated as the forest that remains when the edges of $p$ are removed from the tree $\tau$.
  In either case, we let $\# \tau \setminus p$ be the number of trees in the forest.
  \item We denote by $G^\K_{TM}$ the Butcher group over $\K$ and recall that it is the set of tree maps $G^\K_{TM} = \{a \colon \RT \cup \{\emptyset\} \mid a(\emptyset)=1\}$.
        The multiplication of the Butcher group corresponds to the composition of formal power series (see below \ref{defn: Bseries}).
  \end{nota}

\begin{ex}[Controlled characters of the Connes-Kreimer algebra of rooted trees]\label{ex: CKHopf}
  Consider the algebra $\Hopf^{\K}_{CK} := \K [\cT]$ of polynomials which is generated by the trees in $\RT$.
  One defines a coproduct and an antipode on the trees as follows
  \begin{align*}
     \Delta \colon \Hopf^\K_{CK} &\rightarrow \Hopf^\K_{CK} \otimes \Hopf^\K_{CK} ,\quad  \tau \mapsto \sum_{s \in \OST (\tau)} (\tau \setminus s) \otimes s, \\
      S \colon \Hopf^\K_{CK} &\rightarrow \Hopf_{CK}^\K ,\quad \tau \mapsto \sum_{p \in \cP (\tau)} (-1)^{\# \tau \setminus p} (\tau \setminus p)
    \end{align*}
 One can show that with these structures $\Hopf^\K_{CK}$ is a $\K$-Hopf algebra which is graded and connected with respect to the number of nodes grading (see \cite[5.1]{CHV2010} for details).
 
 Since $|\OST (\tau)| \leq 2^{|\tau|}$ and $|\cP (\tau)| \leq 2^{|\tau|}$ (cf.\ e.g.\ \cite[Appendix B]{BS16}) 
 it follows from the formulae for the antipode and the coproduct that 
 $((\Hopf_{CK}^\K, \RT) , (\Gr{n})_{n \in \N})$ is a control pair for all growth families discussed in Proposition \ref{prop: growth:fam}.

 Recall from \cite[Lemma 4.8]{BDS16} that the group of all characters of the Connes-Kreimer Hopf algebra $\Hopf_{CK}^\K$ corresponds to the Butcher group from numerical analysis 
  \begin{equation}
   \Char{\Hopf_{CK}^\K}{\K} \rightarrow G^\K_{TM} , a \mapsto a|_{\RT \cup \{\emptyset\}}.
  \end{equation}
We can construct the group of controlled characters with respect to every growth family in Proposition \ref{prop: growth:fam}.
 Consider $\controlledChar{\Hopf_{CK}^\K}{\K}$ induced by the control pair $((\Hopf_{CK}^\K, \RT) , (n \mapsto k^n)_{k \in \N})$.
 Then the correspondence of the character group with the Butcher group identifies $\controlledChar{\Hopf_{CK}^\K}{\K}$ with the tame Butcher group constructed in \cite{BS16}.
  
 To see this, recall that the weights used in \cite{BS16} in the construction of the tame Butcher group are of the form $\Gr{k} (n) := 2^{kn} = (2^k)^n$.
 It is easy to see that $(\Gr{k})_{k \in \N}$ is a convex growth family and a cofinal subsequence of the convex growth family.
 Thus the inductive limits of the weighted $\ell^\infty$-spaces constructed with respect to both families coincide, as do the Lie group structures from Theorem \ref{thm: controlled Lie group} and \cite[Theorem 2.4]{BS16}.
 \end{ex}
 
We recall now how controlled characters lead to (locally) convergent power series of interest to numerical analysis.
 
 \begin{defn}[Elementary differentials and B-series]\label{defn: Bseries}
Let now $f \colon E \supseteq U \rightarrow E$ be an analytic mapping on an open subset of the normed space $(E,\norm{\cdot})$. For $\tau \in \RT$ define recursively the \emph{elementary differential} $F_f (\tau) \colon U \rightarrow E$ via
 $F_f(\bullet) (y) = f(y)$ and
  \begin{equation}\label{eq: el:diff}
   F_f(\tau)(y) \coloneq f^{(m)} (y) (F_f (\tau_1)(y), \ldots F_f (\tau_m)(y)) \quad \text{ for } \tau = [\tau_1, \ldots , \tau_m]
  \end{equation}
 where $f^{(m)}$ denotes the $m$th \Frechet derivative.
 Now we can define for an element $a \colon \RT \rightarrow \K$ in the Butcher group, $y \in U$ and $h \in \R$ a formal series
  \begin{equation}\label{Bseries}
   B_f (a,y,h) \coloneq y + \sum_{\tau \in \RT} \frac{h^{|\tau|}}{\sigma (\tau)}a(\tau) F_f (\tau)(y),
  \end{equation}
 called \emph{B-series}. The $\sigma (\tau)$ are the so called symmetry coefficients (used to normalise the series).
As $f$ is usually fixed, the dependence on $f$ will be suppressed in the notation of elementary differentials and B-series.
\end{defn}
 The B-series $B_f(a,y,h)$ corresponds to a numerical approximation of a power series solutions near $y_0$ of the ordinary differential equation 
 \begin{displaymath}(\star) \begin{cases}
              y'(t) &=f(y),\\
              y(0) &= y_0
     \end{cases}.
 \end{displaymath}
 One would like the formal B-series to converge at least locally, but B-series do not exhibit this behaviour in general. However, the B-series of a controlled character converges at least locally (cf.\ \cite[Section 6]{BS16}).

 \subsection*{Example: The Fa\`{a} di Bruno algebra} 	
 
 The aim of the present chapter is twofold. First of all, we recall the construction of the so called Fa\`{a} di Bruno Hopf algebra.
 This Hopf algebra encodes the combinatorial structure of the composition of formal diffeomorphisms on vector spaces.
 As is well known, this composition can be described via the famous Fa\`{a} di Bruno formula (cf.\ e.g.\ \cite{MR3381489}), whence the name of the Hopf algebra.
 In a second step we consider another realisation of this Hopf algebra as a combinatorial Hopf algebra. 
 This example emphasises the importance of the explicit choice of basis for the combinatorial Hopf algebra as a change of base yields different behaviour of the controlled characters.
 
 \begin{rem} The dependence on the basis should be seen as a feature of our approach and not as a defect: In contrast to the construction for the full character group which is insensitive to the grading \cite[3.10 Remark]{BDSOverview}, the construction depends on the grading. Thus the group of controlled characters exhibits new features which are desirable in the renormalisation of quantum field theories (S.\ Paycha, private communications.).
 \end{rem} 
 
 We concentrate here on the classical commutative Fa\`{a} di Bruno algebra.\footnote{There are many Hopf algebras (and bialgebras) which are of ``Fa\`{a} di Bruno type''. For another example, we refer to Example \ref{ex: fdB:Chen} below.} 
 Our exposition follows mainly \cite[Section 3.2.3]{MR3254718}, but we need the explicit formula for the antipode as recorded in \cite[Theorem 2.14]{MR2200854}. (Observe that our choice of variables is compatible with the one in loc.cit. by \cite[Remark 2.12]{MR2200854}.)
 
 \begin{ex}[The commutative Fa\`{a} di Bruno algebra]							\label{ex_faadiBruno}
  The \emph{Fa\`{a} di Bruno algebra} is the combinatorial right-handed Hopf algebra $\Hopf_{FdB}$ given by the following data:
  \begin{itemize}
   \item Fix a graded index set $\Sigma :=\set{a_1,a_2,\ldots}$ with $\abs{a_n}:=n$. 
   Conceptually, the variables correspond to the coordinate functions of formal diffeomorphisms, i.e.\ $a_n (\phi) = \tfrac{1}{(n+1)!} \tfrac{\di^{n+1}}{\di x^{n+1}} \phi (0)$. 
   \item As an algebra $\Hopf_{FdB}$ is the commutative polynomial algebra $\C[\Sigma]$ with the connected grading induced by $\Sigma$.
    The counit $\smfunc{\epsilon}{\Hopf_{FdB}}{\C}$ is projection onto the $0$th component (as in every connected Hopf algebra).
   \item The coproduct $\smfunc{\Delta}{\Hopf_{FdB}}{\Hopf_{FdB}\otimes \Hopf_{FdB}}$ is given by the formula
	\begin{align}
	 \Delta (a_n) &:= \sum_{r=0}^n 
			  \sum_{\substack{\beta_1 + 2\beta_2 + \cdots + n\beta_n = n-r \label{eq: co1fdb:l1} \\ 
					  \beta_0 + \beta_1 + \cdots + \beta_n = r+1}}
			   \frac{(r+1)!}{\beta_0! \beta_1! \cdots \beta_n!} a_r \otimes a_1^{\beta_1}\cdots a_n^{\beta_n}.\\
			   &= \sum_{r=0}^n \frac{(r+1)!}{(n+1)!} B_{n+1,r+1} (1!a_0,2!a_1,\ldots , (n-r+1)!a_{n-r+1})  .\label{eq: co1fdb:l2}			   
	\end{align}
	Here the $B_{n,r}$ are the partial Bell polynomials (cf.\ \cite[Section 3.3]{MR0460128}). Further, we identify $a_0 :=1$ as this yields the Fa\`{a} di Bruno bialgebra. 
   \item The antipode $\smfunc{S}{\Hopf_{FdB}}{\Hopf_{FdB}}$ is given by the explicit formula:
	\begin{align}\label{eq: anti1:fdB}
	 S(a_n) := -a_n - \sum_{r=1}^{n-1} (-1)^r
			   \sum_{\substack{n_1+\cdots + n_r+n_{r+1} = n \\
					   n_1, \ldots, n_{r+1} >0 }}
			    \lambda(n_1,\ldots, n_r)\ a_{n_1} \cdots a_{n_r} a_{n_{r+1}}, \\
	\text{where } \label{eq: anticoeff:fdb}
         \lambda(n_1,\ldots, n_r)=\sum_{\substack{ m_1 + \cdots + m_r = r		\\
							m_1 + \cdots + m_h \geq h	\\
							h=1, \ldots , r-1}}
			    \binom{n_1 + 1}{m_1}\cdots \binom{n_r+1}{m_r}.
        \end{align}
        This formula was derived in \cite{MR2200854} for a \emph{non-commutative} Fa\`{a} di Bruno algebra. Its abelianisation yields an explicit formula for the commutative case.
    \end{itemize} \end{ex}

 \begin{prop}\label{prop controlled:fdb}
 Together with any growth family from Proposition \ref{prop: growth:fam} the combinatorial Hopf algebra $(\Hopf_{FdB}, \Sigma)$ forms  a control pair.
 \end{prop}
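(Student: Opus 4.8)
The plan is to verify directly that the comultiplication $\Delta$ and the antipode $S$ of $\Hopf_{FdB}$ are $\ell^1_\leftarrow$-continuous, by means of the criterion in Lemma \ref{lem_continuity_of_linear_maps}: for each $k_1\in\N$ one has to produce $k_2\in\N$ and $C>0$ with $\ellOneNorm{\Delta(w)}{k_1}\le C\Gr{k_2}(\abs{w})$ and $\ellOneNorm{S(w)}{k_1}\le C\Gr{k_2}(\abs{w})$ for every monomial $w$ in the canonical basis $M=\FreeCommMonoid{\Sigma}$ of $\Hopf_{FdB}$. The first move is to reduce this to the generators $a_n\in\Sigma$. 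Both $\Delta$ and $S$ are algebra homomorphisms ($S$ because $\Hopf_{FdB}$ is commutative), and both are homogeneous of degree $0$ for the grading; hence, writing $w=a_{n_1}\cdots a_{n_\ell}$, every basis tensor occurring in $\Delta(w)$ has total degree $\abs{w}$ and every monomial in $S(w)$ has degree $\abs{w}$, so that $\ellOneNorm{\Delta(w)}{k_1}=\Gr{k_1}(\abs{w})\cdot\norm{\Delta(w)}_{\ell^1}$ and $\ellOneNorm{S(w)}{k_1}=\Gr{k_1}(\abs{w})\cdot\norm{S(w)}_{\ell^1}$, where $\norm{\cdot}_{\ell^1}$ is the unweighted $\ell^1$-norm (the sum of the absolute values of the coefficients in the monomial basis, as in the definition of an (RLB) Hopf algebra). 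As $\norm{\cdot}_{\ell^1}$ is submultiplicative on the monoid algebras $\K^{(M)}$ and $\K^{(M\times M)}=\Hopf_{FdB}\otimes\Hopf_{FdB}$, multiplicativity of $\Delta$ and $S$ yields $\norm{\Delta(w)}_{\ell^1}\le\prod_i\norm{\Delta(a_{n_i})}_{\ell^1}$ and $\norm{S(w)}_{\ell^1}\le\prod_i\norm{S(a_{n_i})}_{\ell^1}$. Thus it is enough to bound $\norm{\Delta(a_n)}_{\ell^1}$ and $\norm{S(a_n)}_{\ell^1}$ by $C^n$ for one fixed constant $C$.

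For the coproduct all coefficients in the formula of Example \ref{ex_faadiBruno} are positive, so $\norm{\Delta(a_n)}_{\ell^1}$ is exactly the sum of those multinomial coefficients; inserting the Bell-polynomial expression \eqref{eq: co1fdb:l2} and the classical identity $B_{n,k}(1!,2!,3!,\dots)=\tfrac{n!}{k!}\binom{n-1}{k-1}$ (the unsigned Lah numbers) collapses this to $\sum_{r=0}^n\binom{n}{r}=2^n$. (Equivalently, $\norm{\Delta(a_n)}_{\ell^1}$ is the coefficient of $x^{n+1}$ in $(f\circ f)(x)$ for $f(x)=x/(1-x)$, i.e.\ in $x/(1-2x)$.) So $\norm{\Delta(a_n)}_{\ell^1}=2^n$.

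For the antipode we estimate from \eqref{eq: anti1:fdB}: all the coefficients $\lambda(n_1,\dots,n_r)$ in \eqref{eq: anticoeff:fdb} are sums of products of binomial coefficients, hence nonnegative, so
\[
 \norm{S(a_n)}_{\ell^1}\ \le\ 1+\sum_{r=1}^{n-1}\ \sum_{\substack{n_1+\cdots+n_{r+1}=n\\ n_1,\dots,n_{r+1}\ge 1}}\lambda(n_1,\dots,n_r).
\]
Discarding the ballot-type constraints $m_1+\cdots+m_h\ge h$ in \eqref{eq: anticoeff:fdb} can only enlarge a sum of nonnegative terms, and the resulting unrestricted sum telescopes by Vandermonde's identity: $\lambda(n_1,\dots,n_r)\le\sum_{m_1+\cdots+m_r=r}\prod_{i=1}^r\binom{n_i+1}{m_i}=\binom{n_1+\cdots+n_r+r}{r}$. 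Since $n_1+\cdots+n_r=n-n_{r+1}\le n-1$ and $r\le n-1$, this is at most $\binom{2n-2}{n-1}\le 4^{n-1}$, and there are $\binom{n-1}{r}$ compositions of $n$ into $r+1$ positive parts, so $\norm{S(a_n)}_{\ell^1}\le 1+4^{n-1}\sum_{r=1}^{n-1}\binom{n-1}{r}\le 1+8^{n-1}\le 8^n$.

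Finally, for a given $k_1$ a threefold application of axiom \ref{axiom_H_Infinity} produces $k_2\ge k_1$ with $\Gr{k_2}(m)\ge 8^m\Gr{k_1}(m)$ for all $m\in\N_0$; then $\ellOneNorm{\Delta(w)}{k_1}\le 2^{\abs{w}}\Gr{k_1}(\abs{w})\le\Gr{k_2}(\abs{w})$ and $\ellOneNorm{S(w)}{k_1}\le 8^{\abs{w}}\Gr{k_1}(\abs{w})\le\Gr{k_2}(\abs{w})$ for every $w\in M$, so $\Delta$ and $S$ are $\ell^1_\leftarrow$-continuous and $((\Hopf_{FdB},\Sigma),(\Gr{k})_{k\in\N})$ is a control pair — in fact for \emph{any} growth family, in particular for each family listed in Proposition \ref{prop: growth:fam}. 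The main obstacle is the antipode estimate: the explicit formula \eqref{eq: anti1:fdB}--\eqref{eq: anticoeff:fdb} must be squeezed into a genuinely exponential bound, and a naive term count there grows super-exponentially; the crucial point is that once the ballot constraints are dropped, the inner sum over $(m_1,\dots,m_r)$ collapses via Vandermonde to a single binomial coefficient of controllable size.
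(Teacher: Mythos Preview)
Your proof is correct and follows the same overall strategy as the paper --- verify the criterion of Lemma \ref{lem_continuity_of_linear_maps} by explicit estimates on $\Delta(a_n)$ and $S(a_n)$ --- but it is in two respects cleaner than the paper's own argument.

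First, you make explicit the reduction from arbitrary basis monomials $w\in M$ to the generators $a_n$: since $\Delta$ and $S$ are graded algebra homomorphisms (the latter because $\Hopf_{FdB}$ is commutative) and the unweighted $\ell^1$-norm is submultiplicative on the monoid algebra, an exponential bound on the generators propagates to all of $M$. The paper only checks the estimate on the $a_n$ and leaves this step implicit.

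Second, your antipode estimate is sharper and more conceptual. The paper bounds $\lambda(n_1,\dots,n_r)$ by counting the terms in \eqref{eq: anticoeff:fdb} via the Catalan number $C_r$ and estimating each summand by $4^n$, arriving at $\norm{S(a_n)}_{\ell^1}\lesssim 32^n$. You instead drop the ballot constraints and collapse the entire inner sum by Vandermonde's identity to a single binomial coefficient $\binom{n_1+\cdots+n_r+r}{r}\le\binom{2n-2}{n-1}\le 4^{n-1}$, which combined with the $2^{n-1}$ compositions gives $\norm{S(a_n)}_{\ell^1}\le 8^n$. This is both a better constant and a more transparent mechanism; it also makes clear that the argument works for \emph{any} growth family (only \ref{axiom_H_Infinity} is used), not merely those listed in Proposition \ref{prop: growth:fam}. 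The coproduct estimate is essentially identical to the paper's, both landing on $2^n$ via the Lah-number identity for $B_{n+1,r+1}(1!,2!,\dots)$.
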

 
 \begin{proof}
  Once the estimates to apply Lemma \ref{lem_continuity_of_linear_maps} are established, $\ell^1$-continuity follows.
  Let us first consider the coproduct and recall from \cite[p.\ 135 Theorem B]{MR0460128} $$B_{n+1,r+1} (1, 2!, 3!,\ldots (n-r+1)!) =\binom{n}{r} \frac{(n+1)!}{(r+1)!}.$$
  Fix $k \in \N$ and use the formula \eqref{eq: co1fdb:l2} to obtain for $n \in \N$ the following
    \begin{align*}
     \norm{\Delta (a_n)}_{\ell^1_k} &= \norm{\sum_{r=0}^n \frac{(r+1)!}{(n+1)!} B_{n+1,r+1} (1!a_0,2!a_1,\ldots , (n-r+1)!a_{n-r+1})}_{\ell^1_k} \\
				    &\leq \sum_{r=1}^{n} B_{n+1,r+1} (1!,2!,\ldots,(n-r+1)!) \Gr{k} (|a_n|)\\
				    &= \Gr{k}(n)\sum_{r=1}^n \frac{(r+1)!}{(n+1)!}\binom{n}{r} \frac{(n+1)!}{(r+1)!} \leq \Gr{k}(n) 2^n.
    \end{align*}
 To pass to the second line we used $\prod_{i=0}^{n-k+1} \Gr{k} (|a_{i}|) \leq \Gr{k} (|a_n|)$ (cf.\ the explicit \eqref{eq: co1fdb:l1}). 
   
  We now turn to the antipode and use the formula \eqref{eq: anti1:fdB}. 
  Note that we immediately have to take an estimate as we are not working with the abelianised version of the formula.
  Again fix $k \in \N$ and let $n \in  \N$. Then we compute 
  \begin{align*}
   \norm{S(a_n)}_{\ell^1_k} &\leq \Gr{k}(|a_n|) + \sum_{r=1}^{n-1} \sum_{\substack{n_1+\cdots + n_r+n_{r+1} = n \\
					   n_1, \ldots, n_{r+1} >0 }} \hspace{-3em}
			    \lambda(n_1,\ldots, n_r)\Gr{k}(|a_{n_1}|) \cdots \Gr{k}(|a_{n_{r+1}}|)\\
			    &\leq \Gr{k}(|a_n|)\left(1+ \sum_{r=1}^{n-1} \sum_{\substack{n_1+\cdots + n_r+n_{r+1} = n \\
					   n_1, \ldots, n_{r+1} >0 }} \hspace{-3em}
			    \lambda(n_1,\ldots, n_r) \right) .
  \end{align*}
  Recall from \eqref{eq: anticoeff:fdb} that the coefficients $\lambda(n_1,\ldots, n_r)$ in the above formula consist of a sum of products of binomial coefficients.
  Taking a very rough estimate (observe the condition in the second sum of \eqref{eq: anticoeff:fdb}!), ever summand in this sum is certainly smaller than $2^{2n} = 4^n$.
  To complete the estimate on $\lambda(n_1,\ldots, n_r)$, recall from \cite[2.4]{MR2200854} that this number equals the $r$th Catalan number $C_r := \frac{1}{r+1} \binom{2r}{r}$.
  Since $r < n$ we obtain 
  \begin{displaymath}
   \lambda(n_1,\ldots, n_r) \leq C_r 4^n \leq 2^{2n} 4^n = 16^n.
  \end{displaymath}
 Inserting this into the above estimate, we obtain 
  \begin{align*}
    \norm{S(a_n)}_{\ell^1_k} &\leq \Gr{k}(|a_n|) \left(1 +  \sum_{r=1}^{n-1} \sum_{\substack{n_1+\cdots + n_r+n_{r+1} = n \\
					   n_1, \ldots, n_{r+1} >0 }} 16^n \right) \leq  \Gr{k}(|X_n|) 16^n 2^{n-1}
  \end{align*}
 For the last estimate we used that the summands do not depend on the summation condition, whence only the number of summands is important.
 As this number is the number of compositions of the number $n$ into $r$ smaller natural numbers, it is smaller than $2^{n-1}$ (cf.\ \cite[Ex.\ 23 on p.\ 123]{MR0460128}).
 Since all growth families grow at least exponentially in $n$, the continuity of the antipode follows (e.g.\ for $\ell > 32k$).
 \end{proof}

 Though the Fa\`{a} di Bruno algebra is a right-handed Hopf algebra, it is unfortunately not an (RLB) Hopf algebra as the next computation with \eqref{eq: co1fdb:l1} shows for $n\in \N$: 
 \begin{align*}
  \norm{\elcopro (a_n)}_{\ell^1} &= \norm{ \sum_{r=1}^{n-1} 
			  \sum_{\substack{\beta_1 + 2\beta_2 + \cdots + n\beta_n = n-r \\ 
					  \beta_1 + \cdots + \beta_n = 1, \beta_i \in \{0,1\}}}
			   \frac{(r+1)!}{r! \beta_1! \cdots \beta_n!} a_r \otimes a_1^{\beta_1}\cdots a_n^{\beta_n}}_{\ell^1} \\
			   &= \norm{\sum_{r=1}^{n-1} (r+1) a_r \otimes a_{n-r}}_{\ell^1} = \sum_{r=1}^{n-1} (r+1) = \frac{n(n+1)}{2} -1 
 \end{align*}
 Hence, the Fa\`{a} di Bruno Hopf algebra is not an (RLB) Hopf algebra and we can not use Theorem \ref{thm_regularity} to establish regularity of its controlled character groups.
 However, as $\exp (\frac{n(n+1)}{2} -1 ) < k^{n^2}$ for large enough $k$, at least the Lie group of controlled characters with respect to the growth family $\Gr{k}(n) = k^{n^2}$ is $C^0$-regular by Remark \ref{rem: lineargrowth}.

  We now realise the Fa\`{a} di Bruno Hopf algebra as a different combinatorial Hopf algebra turn to realise another choice of variables for the Fa\`{a} di Bruno algebra, i.e.\ we realise .
  Note that the only changes in the following example will be a change of base of the Hopf algebra. All other structure maps remain the same (though the formulae have to be expressed in the new basis).
  
   \begin{ex}[Fa\`{a} di Bruno algebra II: Another combinatorial structure]\label{ex_fdB2}
  We construct the Fa\`{a} di Bruno algebra as in Example \ref{ex_faadiBruno}. However, we scale the variables
  \begin{displaymath} 
     \Sigma' := \{X_n \coloneq (n+1)! a_n \mid n \in \N\}
    \end{displaymath} 
    and consider $\Hopf_{FdB}$ as a combinatorial Hopf algebra with respect to $\Sigma'$.
  One introduces the coordinate transformation to the $X_n$ because the coproduct \eqref{eq: co1fdb:l2} changes to 
  \begin{equation}\label{eq: co2:fdb}
   \Delta (X_n) = \sum_{k=0}^n X_k \otimes B_{n+1,k+1} (X_0,X_1, X_2, \ldots , X_{n-k+1}) \quad n \in \N
  \end{equation}
 where $B_{n+1,k+1}$ is the partial Bell polynomial and $X_0 = 1$.
 For the antipode an easy computation shows that \eqref{eq: anti1:fdB} yields the following in the new variables. 
 \begin{equation}\label{eq: anti2fdb}
  S(X_n) = -X_n - \sum_{r=1}^{n-1} (-1)^r \hspace{-1em} \sum_{\substack{n_1+\cdots + n_r+n_{r+1} = n \\
					   n_1, \ldots, n_{r+1} >0 }}\hspace{-2em}
			    \lambda(n_1,\ldots, n_r) \frac{(n+1)!X_{n_1} \cdots X_{n_r} X_{n_{r+1}} }{(n_1+1)! \cdots ((n_{r+1}+1)!)} 
 \end{equation}
 We will now see that $((\Hopf_{FdB}, \Sigma'),(\Gr{k})_{k\in \N})$ is a control pair for the growth families from Proposition \ref{prop: growth:fam} if the growth family is given by $\Gr{k}(n) = k^n (n!)^k$.
 
 To this end recall from \cite[p.\ 135 Theorem B]{MR0460128} that the partial Bell polynomial $B_{n,k} (1,\ldots,1)$ yields the Stirling number of the second kind.
 The Stirling numbers of the second kind add up to the Bell numbers (see \cite[Section 5.4]{MR0460128} , for which asymptotic growth bounds are known (cf.\ \cite{MR2792580}).
 We derive for $n \in \N$ the bound 
  \begin{align*}
   \norm{\Delta (X_n)}_{\ell^1_k} & < \Gr{k}(|X_n|) \left( \frac{0.792(n+1)}{\log (n+1)}\right)^{n+1} \leq \Gr{k}(|X_n|) (n+1)^{n+1}.
  \end{align*}
  Recall the estimate $e\left(\tfrac{n}{e}\right)^n<n!$, whence $(n+1)^{n+1} < (n+1)! e^{n+1} = e(n+1)(n!)e^n \leq e(2e)^n(n!)$.
  Now as $\Gr{k}(n) = k^n (n!)^k$, we see that for $\ell > 2ek$ we obtain the estimate 
    \begin{displaymath}
     \norm{\Delta (X_n)}_{\ell^1_k} < e \Gr{\ell} (|X_n|) \quad \forall n\in \N.
    \end{displaymath}
    Similar to the estimate obtained in Proposition \ref{prop controlled:fdb} for the antipode one establishes an $\ell^1_k$ bound for the antipode from \eqref{eq: anti2fdb}. 
    Thus $((\Hopf_{FdB}, \Sigma'),(n\mapsto k^n(n!)^k)_{k\in \N})$ is a control pair. 
    However, as the above estimates show, the general growth behaviour of the coproduct and the antipode do not allow one to form e.g.\ a group of exponentially bounded characters.
    Note further that also $(\Hopf_{FdB}, \Sigma')$ is not an (RLB)-Hopf algebra (an estimate of the elementary coproduct shows that it grows as $2^{|X_n|}$).
 \end{ex} 

 \subsection*{Example from numerical analysis II: Partitioned methods}
 We will now construct controlled characters in the context of partitioned Butcher series (also called P-series, cf.\ \cite{CHV2010,MR1666537}). 
 To this end, we consider coloured trees. 
 
 \begin{defn}
  Consider a set of colours $\colours = \{ 1, 2, \ldots , N\}$ for $N \in \N \cup \{\infty\}$. 
  A coloured rooted tree $\tau$ is a rooted tree $\tau \in \RT$, where each node has been marked with one of the symbols (colours) $i \in \colours$.
  We let $\RT_{\colours}$ be the \emph{set of coloured rooted trees} and identify $\colours$ with the set of coloured one-node trees in $\RT_{\colours}$.
  A subtree $\sigma$ of the coloured rooted tree $\tau$ is a coloured tree by endowing it with the colouring inherited from $\tau$. 
 \end{defn}

 \begin{setup}[Coloured Connes-Kreimer Hopf algebra]\label{ex: colCKHopf}
 Let $\colours$ be a set of colours and consider the algebra $\Hopf_{CK, \colours}^\K : = \K [\RT_{\colours}]$. 
 This algebra is a graded and connected Hopf algebra with respect to the coproduct, antipode and the number of nodes grading from Example \ref{ex: CKHopf}.
 We call this Hopf algebra the \emph{$\colours$-coloured Connes-Kreimer Hopf algebra}.

 Every growth family from Proposition \ref{prop: growth:fam}, $((\Hopf_{CK}^\K, \RT) , (\Gr{n})_{n \in \N})$ is a control pair (as in Example \ref{ex: CKHopf}). 
 Hence Theorem \ref{thm: controlled Lie group} allows us to construct controlled character groups. Again, one is interested in control pairs which ensure (local) convergence.
 \end{setup}

 In the discussion we restrict to the important special case $\colours = \{ 1,2\}$ (the general case is similar).
 To ease the discussion, associate $1$ with ''white'' and $2$ with ''black''.
 
 \begin{setup}[Partitioned systems of ODE's and P-series]\label{setup: PSeries}
  Let $d \in \N$ and fix analytic functions $f,g \colon \R^{2d} \rightarrow \R^{d}$. 
  We consider the partitioned ordinary differential equation 
    \begin{equation}\label{eq: part:ODE}
     \begin{cases}
      \dot{p} = f(p,q), &  p(0)=p_0\\
      \dot{q} = g(p,q), &  q(0)=q_0 
     \end{cases}
    \end{equation}
  Equations of this type appear naturally for example in the treatment of mechanical systems (distinguishing positions and velocities) ot in rewriting a second order differential equation as a system of equations. 
  In these instances, numerical schemes should respect the special structure of \eqref{eq: part:ODE}.
  This leads to Runge-Kutta-Nystr\"{o}m methods and additive Runge-Kutta methods (cf.\ \cite[Introduction]{MR1666537}, \cite[Section III.2]{MR2221614}).    
  
  Numerical (power-series) solutions to \eqref{eq: part:ODE} can be constructed similarly to the B-series already discussed. 
  To this end one augments the definition of elementary differentials \eqref{eq: el:diff} to encompass coloured trees. 
  Whenever the node in the tree is white, we insert the partial derivative of $f$ with respect to $p$ and if the node is black we insert the differential of $g$ with respect to $q$.
  Consider the following explicit example
  \begin{align*}
  \tau =  \begin{tikzpicture}[dtree, scale=1.5]
                     \node[dtree white node] {}
                     child { node[dtree black node] {}
                     child { node[dtree black node] {}
                     }
                     child { node[dtree white node] {}
                     }}
                     child {node[dtree black node] {}}
                     child { node[dtree black node] {}}
                     ;
                   \end{tikzpicture}  \rightsquigarrow F(\tau)(p,q) = d_1^3 f (p,q; d_2^2 g (p,q; g(p,q) , f(p,q)) , g(p,q),g(p,q)) 
  \end{align*}
 Using the elementary differentials with respect to coloured trees, one defines the P-series\footnote{This presentation of the P-series follows \cite[III. Definition 2.1]{MR2221614}. 
 Note that loc.cit.\ defines for every colour in $\colours$ a different empty tree. This leads to a non-connected Hopf algebra. We avoid this by allowing just one empty tree and refer to \cite{MR1666537} for the development of the theory.} of a map $a \colon \RT_{\colours} \cup \{\emptyset\} \rightarrow \K$ as 
    \begin{align*}
     P_{(f,g)} (a,h,(p,q)) = \def\arraystretch{2}\begin{pmatrix}
                              a(\emptyset)p + \displaystyle\sum_{\substack{\tau \in \RT_{\colours}\\ \text{root of }\tau \text{ is white}}} \frac{h^{|\tau|}}{\sigma (\tau)} a(\tau) F(\tau) (p,q)  \\
                              a(\emptyset)q + \displaystyle\sum_{\substack{\tau \in \RT_{\colours}\\ \text{root of }\tau \text{ is black}}} \frac{h^{|\tau|}}{\sigma (\tau)} a(\tau) F(\tau) (p,q) 
                             \end{pmatrix}.
    \end{align*}
 Arguing as in the case of the tame Butcher group in \cite{BS16}, one can now obtain interesting groups of controlled characters for the coloured Connes-Kreimer Hopf algebras.
 In the special case of two colours, the controlled characters induced by the control pair $(\Hopf_{CK, \colours}^\K,  (n \mapsto k^n)_{k\in \N})$ correspond to a group of (locally) convergent P-series.
 \end{setup}

 \subsection*{Outlook: Non-commutative case, Lie-Butcher theory}
 
 In numerical analysis the Butcher group, B-series and P-series as discussed above are geared towards understanding numerical integration schemes for differential equations on euclidean space.
 There are several generalisations of this concept which are of interest to numerical analysis.
 The other regimes of interest often involve distinct features of non-commutativity. 
 For example, if one wants to treat (autonomous) differential equations evolving on manifolds one has to deal with the non-commutativity of the differentials occurring in this setting.
 This lead to the development and study of Lie-Butcher series and their associated (non-commutative) Hopf algebra. 
 On one hand, one can model these integrators using the shuffle algebra (autonomous case), cf.\ \cite{MR2790315}. 
 Note that the shuffle algebra and its characters appear in the treatment of word series with application to numerical integration as in \cite{MR3648103,MR3485151}.
 On the other hand, one can consider a Fa\`{a}-di Bruno type algebra \cite{MR2790315} to model the non-autonomous case.
 We will now recall from \cite{MR3648103} and \cite{MR2790315} the constructions of power series solutions associated to characters of the shuffle algebra.

 \paragraph{Power series and word series, \cite{MR3648103}} 
 Consider on $\R^d$ the initial value problem 
    \begin{equation}\label{eq: Wseries}
     \begin{cases}
      \frac{\di}{\di t} x(t)  &= \sum_{a \in \mathcal{A}} \lambda_a (t) f_a (x)\\
      x(0)&=x_0      
     \end{cases}
    \end{equation}
 where $\mathcal{A}$ is a finite or countably infinite alphabet with the grading $\rho \equiv 1$, the $\lambda_a$ are scalar-valued functions and the $f_a$ are $\R^d$-valued.  
 For a word $w \in \mathcal{A}^*$ one constructs recursively the \emph{word-basis function} $f_w$ by combining partial derivatives of the functions $f_a$ using the product rule.
 To illustrate this, we recall the following example from \cite[Remark 2]{MR3648103} (derivatives in the following are \Frechet derivatives): 
 \begin{align*}
  f_{ba} (x) &= f_a^{'} (x)(f_b(x)) \\
  f_{cba} (x) &= f_{ba}^{'} (x)f_c(x) = f^{''} f_a (x) (f_b (x),f_c(x)) + f_a^{'} (x)(f_b^{'}(x)(f_c(x))
 \end{align*}
 Thus the word-basis function $f_w$ is a linear combination of elementary differentials (as in \ref{setup: PSeries}) in the functions $f_a, a\in \mathcal{A}$ for some trees.\
 For a character $\delta \in \Char{\text{Sh} (\mathcal{A},\rho)}{\R}$ one defines now the formal series\footnote{Which becomes a power series by introducing a step size parameter $\varepsilon >0$, cf.\ \cite[Remark 3]{MR3648103}.} 
  \begin{equation}\label{pseries}
   W_\delta (x) = \sum_{w \in \mathcal{A}^*} \delta (w) f_w (x)
  \end{equation}
 A similar construction is outlined in  \cite{MR2790315} to construct solutions to (autonomous) differential equations on a manifold.
 To treat time-dependent differential equations on a manifold, one introduces a generalised Connes-Kreimer Hopf algebra (see \cite{MR2407032}, \cite[Section 4.3.3]{MR2790315}).
 Once the algebra is defined, one obtains power series from its characters through a formula which is similar to \eqref{pseries}.
 
 \begin{setup}[Open problems]
  We refrain from discussing the details here as it would involve a combinatorial analysis.
 However, let us outline the main steps and problems:
 \begin{enumerate}
  \item Establish control pairs for the shuffle Hopf algebra (using the basis of Lyndon words) and the generalised Connes-Kreimer algebra (here obtain a basis first).
  \item Study convergence of power series induced by the controlled characters.
 \end{enumerate}

 Similar to \cite{BS16} one could hope that exponentially bounded controlled characters with Cauchy-type estimates imply local convergence.
 However, the construction of the power series \eqref{pseries} differs fundamentally from the construction of the B-series \eqref{Bseries}.
 Contrary to B-series, the power series associated to the shuffle algebra do not include a normalisation (needed to establish local convergence via \cite[Proposition 1.8]{BS16}).
 Thus exponentially growing characters might not lead to (locally) convergent series. 
 \end{setup}
 
\subsection*{Outlook: A Fa\`{a} di Bruno algebra related to Chen-Fliess series} 
 In this subsection we discuss a Hopf algebra closely connected to Chen-Fliess series from control theory.
 Our exposition follows \cite{MR2849486}.
 In control theory, one is interested in integral operators called Fliess operators and their generating series.

\begin{setup}[Generating series of Chen-Fliess operators]\label{setup: genseries}
Fix a finite, non-empty alphabet $X = \{ x_0 , \ldots, x_m\}$ and denote by $\R \langle \langle X\rangle \rangle$ the set of mappings $c \colon X^* \rightarrow \R$.
One interprets elements in  $\R \langle \langle X\rangle \rangle$ as formal series via $c= \sum_{\eta \in X^*} (c,\eta)\eta$ and identifies these series as the generating series of the Fliess operators.
Now adjoin a new symbol $\delta$ (formally the generating series of the identity) and set $c_\delta = \delta + c$ for $c\in \R\langle \langle X\rangle \rangle$.
\end{setup}

\begin{ex}[A Fa\`{a} di Bruno type Hopf algebra \cite{MR2849486}]\label{ex: fdB:Chen} 
We use the notation as in \ref{setup: genseries}.
For $\eta \in X^* \cup \{\delta\}$ define the coordinate functions 
  \begin{displaymath}
   a_\eta \colon \R \langle \langle X\rangle \rangle \rightarrow \R, c \mapsto (c,\eta) \text{ and } a_\delta \equiv 1.
  \end{displaymath}
 Define the commutative $\R$-algebra of polynomials $A = \R[a_\eta \colon \eta \in X \cup \delta]$, where the product is defined as $a_\eta a_\gamma (c_\delta) = a_\eta (c_\delta) a_\gamma (c_\delta)$.
 Then one can construct on $A$ the structure of a graded and connected Hopf algebra such that the character group corresponds to the group of Fliess operators.
 Note that we are deliberately vague here and refer to \cite{MR2849486,MR3278760} and \cite{DuffautEspinosa2016609} for an explicit description (even of a combinatorial basis). Hence $A$ is a right-handed combinatorial Hopf algebra.
\end{ex}
 
 The group $\Char{A}{\R}$ group was considered (formally) in control theory ``as an infinite-dimensional Lie group'' (cf.\ \cite[Section 5]{MR3628306}). It's Lie group structure should be the one from \cite[Theorem A]{BDS16}). 
 Now characters in $\Char{A}{\R}$ which satisfy 
 \begin{equation}\label{eq: gb:CHen}
|\phi (\eta)| < Ck^{|\eta|} |\eta|! \text{ for some } C,k >0 \text{ and all }\eta \in X^*.
\end{equation}
 correspond to locally convergent series as explained in \cite[p.\ 442]{MR2849486}.
 It was shown in \cite[Theorem 7]{MR2849486} that these bounded characters form a subgroup of the character group $\Char{A}{\R}$. 
 Hence, a detailed analysis should show that the Hopf algebra together with the growth bounds \eqref{eq: gb:CHen} form a control pair. 
 Thus we expect to obtain a Lie group of controlled characters by applying Theorem \ref{thm: controlled Lie group} which is of interest in control theory.
 Details of this construction are beyond the scope of the present paper, as they require a detailed understanding of the results in \cite{MR2849486,MR3278760,DuffautEspinosa2016609}.
 We hope to provide details and a discussion of possible applications in future work.
 
 \appendix
 
 \section{Calculus in locally convex spaces}\label{App: lcvx:diff}
 
 
\begin{defn}
 Let $r \in \N_0 \cup \set{\infty}$ and $E$, $F$ locally convex $\K$-vector spaces and $U \subseteq E$ open.
 We say a map $f \colon U \rightarrow F$ is a $C^r_\K$-map if it is continuous and the iterated directional derivatives 
  \begin{displaymath}
   d^kf (x,y_1,\ldots,y_k) \coloneq (D_{y_k} \cdots D_{y_1} f) (x)
  \end{displaymath}
 exist for all $k \in \N_{0}$ with $k \leq r$ and $y_1,\ldots,y_k \in E$ and $x \in U$,
 and the mappings $d^kf \colon U \times E^k \rightarrow F$ so obtained are continuous. 
 If $f$ is $C^\infty_\R$, we say that $f$ is \emph{smooth}.
 If $f$ is $C^\infty_\C$ we say that $f$ is  \emph{holomorphic} or \emph{complex analytic}\footnote{Recall from \cite[Proposition 1.1.16]{dahmen2011}
	    that $C^\infty_\C$ functions are locally given by series of continuous homogeneous polynomials (cf.\ \cite{BS71a,BS71b}).
	    This justifies our abuse of notation.}
 and that $f$ is of class $C^\omega_\C$.     \label{defn: analyt}
\end{defn}


\begin{defn}							\label{defn: real_analytic}							
 Let $E$, $F$ be real locally convex spaces and $f \colon U \rightarrow F$ defined on an open subset $U$. Denote the complexification of $E$ and $F$ by $E_\C$ (by $F_\C$ resp.).
 We call $f$ \emph{real analytic} (or $C^\omega_\R$) if $f$ extends to a $C^\infty_\C$-map $\tilde{f}\colon \tilde{U} \rightarrow F_\C$ on an open neighbourhood $\tilde{U}$ of $U$ in the complexification $E_\C$.
\end{defn}

For $r \in \N_0 \cup \set{\infty, \omega}$, being of class $C^r_\K$ is a local condition, 
i.e.\ if $f|_{U_\alpha}$ is $C^r_\K$ for every member of an open cover $(U_\alpha)_{\alpha}$, 
then $f$  is $C^r_\K$. (see \cite[pp. 51-52]{MR1911979} for the case of $C^\omega_\R$, the other cases are clear by definition.)  
In addition, the composition of $C^r_\K$-maps (if possible) is again a $C^r_\K$-map (cf. \cite[Propositions 2.7 and 2.9]{MR1911979}). 
 
\begin{setup}[$C^r_\K$-Manifolds and $C^r_\K$-mappings between them]
 For $r \in \N_0 \cup \set{\infty, \omega}$, manifolds modelled on a fixed locally convex space can be defined as usual. 
 The model space of a manifold and the manifold as a topological space will always be assumed to be Hausdorff. 
 However, we will neither assume second countability nor paracompactness.
 Direct products of locally convex manifolds, tangent spaces and tangent bundles as well as $C^r_\K$-maps between manifolds may be defined as in the finite-dimensional setting.
For $C^r_\K$-manifolds $M,N$ we use the notation $C^r_\K(M,N)$ for the set of all $C^r_\K$-maps from $M$ to $N$.
Furthermore, for $s \in \{\infty,\omega\}$, we define \emph{locally convex $C^s_\K$-Lie groups} as groups with a $C^s_\K$-manifold structure turning the group operations into $C^s_\K$-maps.
\end{setup}


\begin{prop}\label{prop: avs:weaka}
 Let $E,F$ be complete locally convex spaces over $\C$ and $U \subseteq E$ be an open subset. 
 A continuous $f\colon U \rightarrow F$ is complex analytic if there exists a separating\footnote{``Separating'' means that for every $v \in F$ there is $\lambda \in \Lambda$ with $\lambda (v) \neq 0$.} family $\Lambda$ of continuous linear maps $\lambda \colon F \rightarrow \C$ such that $\lambda \circ f$ is complex analytic.
\end{prop}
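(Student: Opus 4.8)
The plan is to reduce Proposition \ref{prop: avs:weaka} to the classical one–variable Cauchy theory. Recall from the footnote to Definition \ref{defn: analyt} (and \cite{BS71a,BS71b}) that a \emph{continuous} map between open subsets of complex locally convex spaces is complex analytic as soon as it is G-holomorphic, i.e.\ its restriction to every complex affine line is holomorphic in the one–variable sense; since $f$ is assumed continuous, it therefore suffices to verify this G-holomorphy for $f$. So I would fix $x\in U$ and $v\in E$, set $D:=\{z\in\C\mid x+zv\in U\}$, and study the continuous curve $\varphi\colon D\to F$, $\varphi(z):=f(x+zv)$. For every $\lambda\in\Lambda$ the map $\lambda\circ f$ is complex analytic by hypothesis, and precomposing with the $\C$-affine (hence $C^\infty_\C$) map $z\mapsto x+zv$ and using that compositions of $C^r_\K$-maps are again $C^r_\K$ shows that $\lambda\circ\varphi\colon D\to\C$ is an ordinary holomorphic function.

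Next I would run the vector–valued Cauchy argument. Fix $z_0\in D$, choose $r>0$ with $\overline{B_r(z_0)}\subseteq D$, and let $\gamma$ be the circle $|z-z_0|=r$. Since $\varphi$ is continuous and $\gamma$ is compact, $\varphi(\gamma)$ is bounded in $F$; as $F$ is complete, the $F$-valued integrals
\[
 c_n:=\frac{1}{2\pi i}\oint_\gamma\frac{\varphi(z)}{(z-z_0)^{n+1}}\,\mathrm{d}z\in F,\qquad n\in\N_0,
\]
exist, the estimates $p(c_n)\le r^{-n}\sup_{z\in\gamma}p(\varphi(z))$ hold for every continuous seminorm $p$ on $F$, and hence the power series $\sum_{n\ge0}c_n(w-z_0)^n$ converges in $F$ for $|w-z_0|<r$; call its sum $\psi(w)$. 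Applying an arbitrary $\lambda\in\Lambda$, interchanging it with the (absolutely convergent) sum and integrals, and invoking the classical Cauchy/Taylor expansion of the holomorphic function $\lambda\circ\varphi$ gives $\lambda(\psi(w))=\lambda(\varphi(w))$ for all $w\in B_r(z_0)$. Because $\Lambda$ separates the points of $F$, this forces $\varphi(w)=\psi(w)$ on $B_r(z_0)$; thus $\varphi$ is locally the sum of a convergent $F$-valued power series, so it is holomorphic on $D$. Since $x$ and $v$ were arbitrary, $f$ is G-holomorphic, and by the reduction of the first paragraph $f$ is complex analytic.

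The only genuinely non-formal ingredient — and the step I expect to be the main obstacle to write cleanly — is the reduction \textbf{continuous $+$ G-holomorphic $\Rightarrow$ $C^\infty_\C$}, i.e.\ that the iterated directional derivatives of $f$ exist and are \emph{jointly continuous} on $U\times E^k$; this is exactly where the completeness hypotheses on $E$ and $F$ and the several-variable Cauchy estimates of \cite{BS71a,BS71b} come in. By contrast, the vector-valued Cauchy integral formula, the term-by-term application of the functionals in $\Lambda$, and the appeal to the separating property are entirely routine once $F$ is known to be complete, so that all the $F$-valued integrals and power series make sense.
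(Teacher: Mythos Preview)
Your proof is correct and follows essentially the same architecture as the paper's: both reduce to the criterion ``continuous $+$ G\^ateaux-analytic $\Rightarrow$ complex analytic'' (the paper cites \cite[Lemma~1.1.15]{dahmen2011} for this, you cite \cite{BS71a,BS71b}), and both then show that each one-variable slice $z\mapsto f(x+zv)$ is holomorphic by passing through the separating family~$\Lambda$. The only difference is that the paper invokes \cite[Theorem~1]{MR2040581} as a black box for the one-variable step ``weakly holomorphic with respect to a separating family $\Rightarrow$ holomorphic'', whereas you unpack that result by writing down the vector-valued Cauchy coefficients $c_n$ explicitly and using $\Lambda$ to identify the resulting power series with $\varphi$; this is exactly the standard proof of the cited theorem, so the two arguments are the same in substance.
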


\begin{proof}
 It is well known (cf.\ \cite[Lemma 1.1.15]{dahmen2011}) that $f$ is complex analytic if and only if $f$ is continuous and Gateaux-analytic at every point of its domain. 
 By definition $f$ is Gateaux analytic if for every $x \in U$ and $v \in E$ there exists $\varepsilon >0$ such that  
  \begin{displaymath}
   f_{x,v} \colon B_{\varepsilon}^\C (0) \rightarrow F, \quad z \mapsto f(x+zv)
  \end{displaymath}
 is analytic. However, our assumption shows that for every such pair $(x,v)$ and $\lambda \in \Lambda$ the mapping $\lambda \circ f_{x,v}$ is complex analytic.
 As $\Lambda$ separates the points and $F$ is complete \cite[Theorem 1]{MR2040581} shows that $f_{x,v}$ is complex analytic, whence $f$ is so.
\end{proof} 


\section{Auxiliary results for Chapter 1}\label{app: aux:func}

\begin{lem}
 Let $\IndwAbs$ be a graded index set and fix a growth family $\left(\Gr{k}\right)_{k\in\N}$.

 For each $k\in\N$, the vector space $\ellOne{k+1}{\Ind}$ is a subspace of $\ellOne{k}{\Ind}$ and the inclusion
		\[
		 I_k \colon \nnfunc{\ellOne{k+1}{\Ind}}{\ellOne{k}{\Ind}}{\sum_\tau c_\tau \cdot \tau}{\sum_\tau c_\tau \cdot \tau}
		\]
		is continuous linear with operator norm at most $1$.
 \label{lem_canonical_maps}
 \end{lem}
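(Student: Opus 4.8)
The plan is to reduce everything to property \ref{axiom_H_monotonic_in_k}, which gives $\Gr{k}(n)\leq\Gr{k+1}(n)$ for all $n\in\N_0$. I would work with the concrete description of these Banach spaces as weighted sequence spaces recorded just above the lemma: an element of $\ellOne{k}{\Ind}$ is a formal sum $x=\sum_{\tau\in\Ind}c_\tau\cdot\tau$ with $c_\tau\in\K$ and $\ellOneNorm{x}{k}=\sum_{\tau}\abs{c_\tau}\Gr{k}(\abs{\tau})<\infty$, the norm being this quantity.

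Given such an $x$, property \ref{axiom_H_monotonic_in_k} yields the termwise bound $\abs{c_\tau}\Gr{k}(\abs{\tau})\leq\abs{c_\tau}\Gr{k+1}(\abs{\tau})$ for every $\tau\in\Ind$, and summing over $\tau$ gives
\[
  \ellOneNorm{x}{k}=\sum_{\tau}\abs{c_\tau}\Gr{k}(\abs{\tau})\;\leq\;\sum_{\tau}\abs{c_\tau}\Gr{k+1}(\abs{\tau})=\ellOneNorm{x}{k+1}.
\]
In particular, if $\ellOneNorm{x}{k+1}<\infty$ then $\ellOneNorm{x}{k}<\infty$, so every element of $\ellOne{k+1}{\Ind}$ is also an element of $\ellOne{k}{\Ind}$. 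Since addition and scalar multiplication on both spaces are the coefficientwise operations on the formal sums, $\ellOne{k+1}{\Ind}$ is a linear subspace of $\ellOne{k}{\Ind}$ and $I_k$ is the (evidently injective) linear inclusion map. The same displayed inequality reads $\ellOneNorm{I_k x}{k}=\ellOneNorm{x}{k}\leq\ellOneNorm{x}{k+1}$ for all $x\in\ellOne{k+1}{\Ind}$, which is precisely the assertion that $I_k$ is continuous with operator norm at most $1$.

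If instead one insists on starting from the abstract definition of $\ellOne{k}{\Ind}$ as the completion of $\K^{(\Ind)}$, the identical termwise estimate shows that $\id_{\K^{(\Ind)}}$ is $1$-Lipschitz from the $\ell^1_{k+1}$-norm to the $\ell^1_k$-norm on the dense subspace $\K^{(\Ind)}$, hence extends uniquely to a continuous linear map of operator norm $\leq 1$ between the completions; one then identifies this extension with the map $I_k$ via the explicit descriptions of the completions. There is no genuine obstacle in this argument; the only point deserving a remark is this harmless identification of the completions with the explicit weighted sequence spaces, which is already supplied in the text preceding the lemma.
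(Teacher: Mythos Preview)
Your proof is correct and follows essentially the same approach as the paper: both reduce immediately to the termwise inequality $\Gr{k}(\abs{\tau})\leq\Gr{k+1}(\abs{\tau})$ from \ref{axiom_H_monotonic_in_k} and sum over $\tau$ to obtain $\ellOneNorm{x}{k}\leq\ellOneNorm{x}{k+1}$. Your additional remark about the completion viewpoint is a harmless elaboration not present in the paper's shorter argument.
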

 \begin{proof} Let $\sum_\tau c_\tau \cdot \tau\in\ellOne{k+1}{\Ind}$ be given. Then
  \begin{align*}
   \ellOneNorm{\sum_\tau c_\tau \cdot \tau}{k}
		   =					\sum_\tau \abs{c_\tau} \Gr{k}(\abs{\tau})
		 \stackrel{\textup{\ref{axiom_H_monotonic_in_k}}}{\leq}	\sum_\tau \abs{c_\tau} \Gr{k+1}(\abs{\tau})
		    =					\ellOneNorm{\sum_\tau c_\tau \cdot \tau}{k+1}.			\qedhere
  \end{align*}
 \end{proof}

 \begin{lem}\label{lem: k:isom:iso}
 Fix a graded index set $\Ind$, a growth family $\left(\Gr{k}\right)_{k\in\N}$ and a Banach space $B$.
 Let $k \in \N$ and endow $\K^{(\Ind)}$ with the $\ell^1_k$-norm and the continuous linear maps $\textup{Hom}_{\K} (\K^{(\Ind)},B)$ with the operator norm.
 Then there is an isometric isomorphism 
  \begin{displaymath}
   R \colon \textup{Hom}_{\K} (\K^{(\Ind)},B) \rightarrow \ellInfty{k}{\Ind}{B} , f \mapsto f|_{\Ind}.
  \end{displaymath}
 \end{lem}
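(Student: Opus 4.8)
The statement to prove (Lemma \ref{lem: k:isom:iso}) asserts that the restriction map $R\colon \Hom_\K(\K^{(\Ind)},B)\to\ellInfty{k}{\Ind}{B}$, $f\mapsto f|_\Ind$, is an isometric isomorphism, where $\K^{(\Ind)}$ carries the $\ell^1_k$-norm and the left-hand side carries the operator norm. Let me sketch a proof plan.

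=== PROOF PROPOSAL ===

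The plan is to verify that $R$ is a well-defined linear bijection and then check the isometry property directly from the definitions of the two norms. First I would recall from \ref{setup_linear_maps_defined_on_a_basis} that, since $\Ind$ is a vector space basis of $\K^{(\Ind)}$, the assignment $f\mapsto f|_\Ind$ already gives a linear bijection $\Hom_\K(\K^{(\Ind)},B)\to B^\Ind$ (with inverse sending $g\colon\Ind\to B$ to its unique linear extension $\phi_g$). So the content of the lemma is twofold: (i) this bijection identifies the subset of \emph{continuous} (i.e.\ $k$-controlled) linear maps with $\ellInfty{k}{\Ind}{B}$, and (ii) it is norm-preserving. The linearity of $R$ and of its inverse is immediate from uniqueness of linear extensions.

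The heart of the argument is the norm computation. Fix a linear map $f\colon\K^{(\Ind)}\to B$ and write $c\coloneq \sum_{\tau\in\Ind} c_\tau\cdot\tau$ for a general element, so that $f(c)=\sum_\tau c_\tau f(\tau)$, a finite sum. For the direction $\opnorm{f}\geq \norm{f|_\Ind}_{\ell^\infty_k}$: for each single basis element $\tau$ we have $\norm{\tau}_{\ell^1_k}=\Gr{k}(\abs{\tau})$, hence $\Bnorm{f(\tau)}=\Bnorm{f(\tau)}$ and $\frac{\Bnorm{f(\tau)}}{\Gr{k}(\abs{\tau})} = \frac{\Bnorm{f(\tau)}}{\norm{\tau}_{\ell^1_k}}\leq \opnorm{f}$; taking the supremum over $\tau\in\Ind$ gives $\norm{f|_\Ind}_{\ell^\infty_k}\leq\opnorm f$ (this already shows that if $f$ is continuous then $f|_\Ind\in\ellInfty{k}{\Ind}{B}$). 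For the reverse inequality $\opnorm f\leq \norm{f|_\Ind}_{\ell^\infty_k}$, assume $f|_\Ind\in\ellInfty{k}{\Ind}{B}$ with norm $C\coloneq\norm{f|_\Ind}_{\ell^\infty_k}$, so $\Bnorm{f(\tau)}\leq C\,\Gr{k}(\abs{\tau})$ for all $\tau$. Then for any $c=\sum_\tau c_\tau\cdot\tau$,
\[
 \Bnorm{f(c)} = \Bnorm{\sum_\tau c_\tau f(\tau)} \leq \sum_\tau \abs{c_\tau}\,\Bnorm{f(\tau)} \leq C\sum_\tau \abs{c_\tau}\,\Gr{k}(\abs{\tau}) = C\,\norm{c}_{\ell^1_k},
\]
which shows $f$ is continuous (bounded) with $\opnorm f\leq C$. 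In particular, $f|_\Ind\in\ellInfty{k}{\Ind}{B}$ implies $f$ is continuous, so $R$ does restrict to a bijection between the continuous maps and $\ellInfty{k}{\Ind}{B}$, and combining the two inequalities yields $\opnorm f=\norm{f|_\Ind}_{\ell^\infty_k}$.

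Finally I would assemble these observations: $R$ is linear, it maps $\Hom_\K(\K^{(\Ind)},B)$ (interpreted here as the space of \emph{continuous} linear maps, as the operator-norm hypothesis signals) onto $\ellInfty{k}{\Ind}{B}$ since every $g\in\ellInfty{k}{\Ind}{B}$ extends to a continuous $\phi_g$ by the bound just established, it is injective because a linear map vanishing on a basis is zero, and it is isometric by the norm identity. There is essentially no hard step here; the only point requiring a little care is to be clear about the domain — the lemma's phrasing ``$\Hom_\K(\K^{(\Ind)},B)$ with the operator norm'' should be read as the subspace of continuous (equivalently, $k$-controlled, cf.\ Definition \ref{defn: lin:kcontrolled}) linear maps, since an unbounded linear map has no finite operator norm; once this is understood the proof is a direct unwinding of definitions. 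This also gives the remark in \ref{setup_linear_maps_defined_on_a_basis} that $k$-controlled linear maps correspond precisely to elements of $\ellInfty{k}{\Ind}{B}$.
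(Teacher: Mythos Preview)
Your proof is correct and follows essentially the same route as the paper: both directions of the norm identity are obtained exactly as you describe, by evaluating $f$ on single basis elements $\tau$ (using $\norm{\tau}_{\ell^1_k}=\Gr{k}(\abs{\tau})$) and, conversely, by expanding a general element and applying the triangle inequality. Your additional remarks on bijectivity and on reading the domain as the space of continuous linear maps are well taken and make explicit what the paper leaves implicit.
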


 \begin{proof}
  Denote by $\opnorm{f}$ the operator norm of $f \in \textup{Hom}_{\K} (\K^{(\Ind)},B)$. For $\tau \in \Ind$ we derive 
    \begin{displaymath}
     \norm{R(f)(\tau)}_{B} = \norm{f(\tau)}_{B} \leq \opnorm{f}\norm{\tau}_{\ell^1_k} = \opnorm{f} \omega_k (|\tau|)
    \end{displaymath}
 Dividing by $\omega_k (|\tau|)$ and passing to the supremum over $\tau \in \Ind$ in the above inequality, we see that $\norm{R(f)}_{\ell^\infty_k} = \norm{f|_{\Ind}}_{\ell^\infty_k} \leq \opnorm{f}$.
 Conversely, for $x = \sum_{\tau \in \Ind} c_x \tau \in \K^{(\Ind)}$, 
  \begin{align*}
   \norm{f(x)}_B &\leq \sum_{\tau \in \Ind} |c_x|\norm{f(\tau)}_B = \sum_{\tau \in \Ind} |c_x|\frac{\norm{f(\tau)}_B}{\omega_k (|\tau|)} \omega_k (|\tau|)\\
		 &\leq \norm{x}_{\ell^1_k} \norm{f|_{\Ind}}_{\ell^\infty_k} = \norm{x}_{\ell^1_k} \norm{R(f)}_{\ell^\infty_k}.
  \end{align*}
 Dividing by $\norm{x}_{\ell^1_k}$ and passing to the supremum over $x$, we see $\opnorm{f} \leq \norm{R(f)}_{\ell^\infty_k}$. 
 \end{proof}

We will now establish Lemma \ref{lem_continuity_of_linear_maps} using a result which should be a standard fact from functional analysis. Unfortunately, we were unable to track it down.
 
 \begin{lem}[Factorisation Lemma]											\label{lem_factorisation}
  Let $E:=\projlim{}{E_k}$ be the projective limit of a sequence of Banach spaces $(E_1\leftarrow E_2 \leftarrow \cdots)$ 
  such that the projections $\smfunc{\pi_k}{E}{E_k}$ have dense image.\footnote{This is equivalent to the assumption that the bonding maps have dense image.}
  Let $\smfunc{\phi}{E}{B}$ be a linear map with values in a Banach space $B$. Then $\phi$ is continuous if and only if there is $k_0\in\N$ such that $\phi$ factors through a continuous linear map $\smfunc{\widetilde \phi}{E_{k_0}}{B}$.
 \end{lem}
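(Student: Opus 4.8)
The direction ``$\Leftarrow$'' is immediate and does not use the density hypothesis: if $\phi=\widetilde\phi\circ\pi_{k_0}$ with $\widetilde\phi\colon E_{k_0}\to B$ continuous linear, then $\phi$ is continuous as a composition of continuous maps, since $\pi_{k_0}\colon E\to E_{k_0}$ is continuous by the definition of the projective limit topology. So the plan is to establish the converse.

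For ``$\Rightarrow$'' I would first set up the seminorm bookkeeping. The topology on $E=\projlim{k}E_k$ is generated by the seminorms $p_k(x):=\norm{\pi_k(x)}_{E_k}$, $k\in\N$. This family is cofinal and increasing \emph{up to constants}: writing $f_k\colon E_{k+1}\to E_k$ for the bonding map, one has $\pi_k=f_k\circ\pi_{k+1}$, and $f_k$ is continuous linear between Banach spaces, so $p_k\leq C_k\,p_{k+1}$ with $C_k:=\opnorm{f_k}$; iterating, $p_j\leq C_{j,\ell}\,p_\ell$ for all $j\leq\ell$. Hence every $0$-neighbourhood of $E$ contains a set $\{x\in E : p_{k_0}(x)<\varepsilon\}$ for a \emph{single} index $k_0$. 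Continuity of $\phi$ at $0$ then yields $k_0\in\N$ and $C>0$ with $\norm{\phi(x)}_B\leq C\,p_{k_0}(x)=C\,\norm{\pi_{k_0}(x)}_{E_{k_0}}$ for all $x\in E$ (the passage from the implication ``$p_{k_0}(x)<\varepsilon\Rightarrow\norm{\phi(x)}_B\leq1$'' to this homogeneous estimate is the routine scaling argument, which in particular forces $\phi$ to vanish on $\ker\pi_{k_0}$).

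Next I would build the factorisation. By the estimate, $\phi(x)$ depends only on $\pi_{k_0}(x)$, so $\widetilde\phi_0\colon\pi_{k_0}(E)\to B$, $\pi_{k_0}(x)\mapsto\phi(x)$, is a well-defined linear map on the subspace $\pi_{k_0}(E)\subseteq E_{k_0}$, bounded with $\opnorm{\widetilde\phi_0}\leq C$. Here the hypothesis is used: the bonding maps (equivalently the projections) have dense image, so $\pi_{k_0}(E)$ is dense in the Banach space $E_{k_0}$; since $B$ is complete, $\widetilde\phi_0$ extends uniquely to a continuous linear map $\widetilde\phi\colon E_{k_0}\to B$. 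By construction $\phi=\widetilde\phi\circ\pi_{k_0}$, as desired.

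The only genuinely delicate point is the first step: one must make sure that a \emph{single} seminorm $p_{k_0}$, rather than a finite maximum over several indices, dominates $\norm{\phi(\cdot)}_B$ — this is exactly what the ``increasing up to constants'' observation provides for a projective limit over the directed index set $\N$. Everything afterwards (well-definedness of $\widetilde\phi_0$ and the bounded-linear-extension-by-density argument) is standard functional analysis, relying on completeness of $B$ and density of $\pi_{k_0}(E)$ in $E_{k_0}$.
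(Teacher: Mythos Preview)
Your proof is correct and follows essentially the same approach as the paper: obtain a single-index estimate $\norm{\phi(x)}_B\leq C\,\norm{\pi_{k_0}(x)}_{E_{k_0}}$ from continuity, note that $\phi$ then descends to $\pi_{k_0}(E)$, and extend by density using completeness of $B$. You are slightly more explicit than the paper about why a single seminorm $p_{k_0}$ (rather than a finite maximum) suffices, but this is just additional detail on the same argument.
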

 \begin{proof}
  It is clear that the condition is sufficient for the continuity of $\phi$. Hence we have to establish that it is also necessary.
  Since $\smfunc{\phi}{E}{B}$ is continuous there is a continuous seminorm $p$ on $E$ such that $\norm{\phi(x)}_B\leq p(x)$ for all $x\in E$. The topology on $E=\projlim{}E_k$ is generated by the projections $\smfunc{\pi_k}{E}{E_k}$. Hence there is a $k_0\in\N$ and a constant $C>0$ such that
  \[
   \norm{\phi(x)}_B\leq p(x) \leq C \norm{\pi_{k_0}(x)}_{E_{k_0}}\quad\hbox{ for all }x\in E.
  \]
  If $x\in\ker\pi_{k_0}$, this implies at once that $\phi(x)=0$ and we obtain a well-defined map
  \[
   \func{\psi}{\pi_{k_0}(E)\subseteq E_{k_0}}{F}{\pi_{k_0}(x)}{\phi(x).}
  \]
  This linear map is continuous with operator norm at most $C$ and hence it can be extended to a continuous linear map $\tilde{\varphi} \colon E_{k_0} \rightarrow B$ as $\pi_{k_0}(E)$ is dense in $E_{k_0}$.
 \end{proof}

 
 \begin{lem}[{Lemma \ref{lem_continuity_of_linear_maps}}]				
  Let $\IndwAbs$ and $\varIndwAbs$ be graded index sets. Then a linear map $\smfunc{T}{\K^{(\Ind)}}{\K^{(\varInd)}}$ extends to a (unique) continuous operator $\widetilde T \colon \ellOneLimit{\Ind} \rightarrow \ellOneLimit{\varInd}$, if and only if for each $k_1\in\N$ there is a $k_2\in\N$ and a $C>0$ such that
  \[
   \ellOneNorm{T \tau}{k_1}\leq C \Gr{k_2}(\abs{\tau}) \quad \text{for all } \tau\in\Ind.
  \]
  \end{lem}
 \begin{proof}
  We first show that this criterion is sufficient. Note that we only need that $T$ is continuous with respect to all $\ellOneNorm{\cdot}{k}$ on the range space. 
  For $k_1\in\N$ choose $k_2$ and $C\geq0$ as in the statement of the Lemma. 
  Let $\sum_\tau c_\tau \cdot \tau\in \K^{(\Ind)}$ be given. Then
  \begin{align*}
   \ellOneNorm{T\left(	\sum_\tau c_\tau \cdot \tau	\right)}{k_1}
	      &	=	\ellOneNorm{	\sum_\tau c_\tau \cdot  T \tau	}{k_1} \leq	\sum_\tau \abs{c_\tau} \cdot  \ellOneNorm{T \tau	}{k_1}
	    \\&	\leq		\sum_\tau \abs{c_\tau} C \Gr{k_2}(\abs{\tau}) =		C \sum_\tau \abs{c_\tau} h_{k_2}(\tau)
	    =		C \ellOneNorm{\sum_\tau c_\tau \cdot \tau}{k_2}.
  \end{align*}
  Conversely, assume that the linear map $\smfunc{T}{\K^{(\Ind)}}{\K^{(\varInd)}}$ can be extended to a continuous linear operator
  $ \smfunc{\widetilde T}{\ellOneLimit{\Ind}}{\ellOneLimit{\varInd}}$.
  Then for each fixed $k_1\in\N$, we can compose this map with the projection to the Banach space $\ellOne{k_1}{\varInd}$.
  This yields a continuous linear map from $\ellOneLimit{\Ind}$ to $\ellOne{k_1}{\varInd}$.
  By Lemma \ref{lem_factorisation} this operator factors through one of the steps $\ellOne{k}{\Ind}$, i.e.~there is $k_2$ such that 
  $
   \nnfunc{\ellOne{k_2}{\Ind}}{\ellOne{k_1}{\varInd}}{\sum_\tau c_\tau \cdot \tau}{\widetilde T \left( \sum_\tau c_\tau \cdot \tau}\right) 
  $
  is a continuous operator between these Banach spaces with operator norm $0\leq C < \infty$. 
  Now each $\tau\in\Ind$ can be regarded as a vector in $\ellOne{k_2}{\Ind}$ and hence we have the estimate:
  \[
   \ellOneNorm{T \tau}{k_1} = \ellOneNorm{\widetilde T \tau }{k_1} \leq C \ellOneNorm{\tau}{k_2} = C h_{k_2}(\tau) = C \Gr{k_2}(\abs{\tau}).\qedhere
  \]
 \end{proof}
 
 
 \begin{lem}[Compact regularity of $\ellInftyLimit{\Ind}{B}$]								\label{lem_creg_lim}
  Let $\IndwAbs$ be a graded index set, $\left(\Gr{k}	\right)_{k\in\N}$ be a convex growth family and $B$ be a Banach space. 
  Then the direct limit $\ellInftyLimit{\Ind}{B}$ is \emph{compactly regular}, i.e.~for every compact set $K\subseteq \ellInftyLimit{\Ind}{B}$ there is a $k\in\N$ such that $K$ is a compact subset of $\ellInfty{k}{\Ind}{B}$.
 \end{lem}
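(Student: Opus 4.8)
The goal is to show that the inductive limit $\ellInftyLimit{\Ind}{B}=\varinjlim_k \ellInfty{k}{\Ind}{B}$ is compactly regular, i.e.\ every compact $K$ in the limit already sits (compactly) inside one step. The natural strategy is a contradiction/retreat-to-a-step argument exploiting the convexity axiom \ref{axiom_H_convexity}. First I would recall the abstract criterion (see e.g.\ the relevant results on (LB)-spaces in \cite{MR1977923}, \cite{MR1483073}) that for an inductive limit of Banach spaces with injective bonding maps of norm $\le 1$, it suffices to produce, for each $k_1$, an index $k_2\ge k_1$ such that the $\ell^\infty_{k_2}$-topology and the $\ell^\infty_{k_1}$-topology agree on every bounded subset of $\ellInfty{k_1}{\Ind}{B}$ — equivalently, that the limit is \emph{regular} in the sense that bounded sets come from a step and the trace topologies stabilise. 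This is exactly where \ref{axiom_H_convexity} enters: given $k_1$, convexity supplies $k_2\ge k_1$ and, for each $k_3\ge k_2$, an exponent $\alpha\in\,]0,1[\,$ with $\Gr{k_1}(n)^\alpha\Gr{k_3}(n)^{1-\alpha}\le\Gr{k_2}(n)$ for all $n$.

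\textbf{Key steps.} (1) Fix $k_1$ and take $k_2$ as furnished by \ref{axiom_H_convexity}. Let $K\subseteq\ellInftyLimit{\Ind}{B}$ be compact; since the limit is an (LB)-space, $K$ is bounded, hence contained and bounded in some step $\ellInfty{k_3}{\Ind}{B}$ with $k_3\ge k_2$ (if $k_3<k_2$ enlarge it). Say $\norm{f}_{\ell^\infty_{k_3}}\le R$ for all $f\in K$. (2) The crucial estimate: for $f,g\in K$ and any $\tau\in\Ind$,
\[
 \frac{\Bnorm{f(\tau)-g(\tau)}}{\Gr{k_2}(\abs\tau)}
 =\frac{\Bnorm{f(\tau)-g(\tau)}^\alpha}{\Gr{k_1}(\abs\tau)^\alpha}\cdot\frac{\Bnorm{f(\tau)-g(\tau)}^{1-\alpha}}{\Gr{k_3}(\abs\tau)^{1-\alpha}}
 \cdot\frac{\Gr{k_1}(\abs\tau)^\alpha\Gr{k_3}(\abs\tau)^{1-\alpha}}{\Gr{k_2}(\abs\tau)}
 \le \norm{f-g}_{\ell^\infty_{k_1}}^\alpha\,(2R)^{1-\alpha},
\]
using \ref{axiom_H_convexity} for the last factor. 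Taking the supremum over $\tau$ yields $\norm{f-g}_{\ell^\infty_{k_2}}\le (2R)^{1-\alpha}\,\norm{f-g}_{\ell^\infty_{k_1}}^\alpha$. Hence on $K$ the metric induced by $\norm{\cdot}_{\ell^\infty_{k_2}}$ is dominated by (a power of) the one induced by $\norm{\cdot}_{\ell^\infty_{k_1}}$, while trivially $\norm{\cdot}_{\ell^\infty_{k_3}}\le\norm{\cdot}_{\ell^\infty_{k_2}}\le\norm{\cdot}_{\ell^\infty_{k_1}}$. (3) Conclude that the topologies induced on $K$ by $\ell^\infty_{k_1}$, $\ell^\infty_{k_2}$ and $\ell^\infty_{k_3}$ all coincide; since the inductive-limit topology on $K$ is the coarsest of these (it is coarser than the $\ell^\infty_{k_1}$ one, being the trace from the limit) and finer than the $\ell^\infty_{k_3}$ one, all four agree. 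As $K$ is compact for the limit topology, it is compact for the $\ell^\infty_{k_2}$-topology, i.e.\ a compact subset of the Banach space $\ellInfty{k_2}{\Ind}{B}$; this is the assertion.

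\textbf{Main obstacle.} The delicate point is not the interpolation inequality itself — that is a one-line Hölder-type manipulation — but justifying that the inductive-limit topology restricted to $K$ coincides with the step topologies, and in particular that $K$, a priori only known to be limit-compact, lands in a single step as a \emph{bounded} set. For this one needs that (LB)-spaces of this Banach type are boundedly retractive on bounded sets, which is standard but must be invoked carefully; I would cite the characterisation of regularity of (LB)-spaces (e.g.\ via \cite{MR1977923}) and note that boundedness of $K$ plus the norm-$\le 1$ bonding maps force $K\subseteq\ellInfty{k_3}{\Ind}{B}$ for some $k_3$. One subtlety to handle explicitly: the interpolation argument requires $k_3\ge k_2$, so if the bounding step returned by boundedness is too small one simply passes to a larger step, harmlessly inflating $R$. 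Everything else is routine.
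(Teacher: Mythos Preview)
Your interpolation inequality in step (2) is exactly the right computation, and it is also the heart of the paper's proof. The problem is the logical scaffolding you build around it.

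First, the step ``$K$ is bounded in the limit, hence contained and bounded in some step $\ellInfty{k_3}{\Ind}{B}$'' is precisely the \emph{regularity} of the (LB)-space; this is not automatic for (LB)-spaces and is, by Wengenroth's characterisation (the very reference \cite{MR1977923} you invoke), equivalent to the compact regularity you are trying to prove. So the argument is circular as written. Second, your indices are tangled: you fix $k_1$ arbitrarily, obtain $k_2$ from \ref{axiom_H_convexity}, and only afterwards learn that $K\subseteq\ellInfty{k_3}{\Ind}{B}$ with $k_3\ge k_2\ge k_1$. But the inclusions go $\ellInfty{k_1}{\Ind}{B}\subseteq\ellInfty{k_2}{\Ind}{B}\subseteq\ellInfty{k_3}{\Ind}{B}$, so $K$ need not lie in $\ellInfty{k_1}{\Ind}{B}$ at all, and $\norm{f-g}_{\ell^\infty_{k_1}}$ on the right of your interpolation bound may be infinite. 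Third, the topology comparison in step (3) is reversed: on $K\subseteq\ellInfty{k_3}{\Ind}{B}$ the inductive-limit topology is \emph{coarser}, not finer, than the $\ell^\infty_{k_3}$-topology, and you have no a priori handle on the limit topology from below.

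The paper avoids all of this by using the interpolation inequality to verify Wengenroth's criterion directly: for every $k_1$ choose $k_2$ as in \ref{axiom_H_convexity}; then for every $k_3\ge k_2$ and $\varepsilon>0$, the estimate $\norm{f}_{\ell^\infty_{k_2}}\le\norm{f}_{\ell^\infty_{k_1}}^\alpha\norm{f}_{\ell^\infty_{k_3}}^{1-\alpha}$ (your computation, applied to a single $f$) shows that $\oBallin{1}{\ellInfty{k_1}{\Ind}{B}}{0}\cap\oBallin{\delta}{\ellInfty{k_3}{\Ind}{B}}{0}\subseteq\oBallin{\varepsilon}{\ellInfty{k_2}{\Ind}{B}}{0}$ once $\delta^{1-\alpha}<\varepsilon$. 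Wengenroth's theorem then yields compact regularity without ever touching a specific compact set. Your key estimate survives intact; just feed it into the abstract criterion instead of trying to run the compactness argument by hand.
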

 \begin{proof}
  By \cite[Theorem 6.1 and Theorem 6.4]{MR1977923}, compact regularity follows from 
  \begin{align*}
   \forall k_1\in\N \ \exists k_2\geq k_1 \text{ such that }\forall k_3\geq k_2 \colon (\forall \epsilon>0 \exists \delta>0)\colon \\ \oBallin{1}{\ellInfty{k_1}{\Ind}{B}}{0} \cap \oBallin{\delta}{\ellInfty{k_3}{\Ind}{B}}{0} \subseteq  \oBallin{\epsilon}{\ellInfty{k_2}{\Ind}{B}}{0}.
  \end{align*}
  For fixed $k_1\in\N$, we need $k_2$ such that for all $k_3 \geq k_2 $, $\epsilon>0$ there exists $\delta>0$ with
  \begin{displaymath}
   \sup_{\tau} \frac{\abs{f(\tau)}}{\Gr{k_1} (|\tau|)} \leq 1 \text{ and } \sup_{\tau}\frac{\abs{f (\tau)}}{\Gr{k_3}(|\tau|)} \leq \delta \text{ implies }  \sup_{\tau} \frac{\abs{f(\tau)}}{\Gr{k_2} (|\tau|)} \leq \epsilon.
  \end{displaymath}
  Let $k_1\in\N$ be given. We choose $k_2\geq k_1$ as in \ref{axiom_H_convexity} and fix some $\varepsilon >0$. 
  We will now construct a suitable $\delta >0$ independent of $\tau \in \Ind$ which satisfies the above property. 
  By \ref{axiom_H_monotonic_in_k} we know that $\Gr{k_1} (|\tau|) < \Gr{k_2}(|\tau|)$ for all $\tau$. Hence for $\alpha \in ]0,1[$ we obtain the estimate 
  $
   \Gr{k_1} (\cdot)^\alpha \cdot \Gr{k_3}(\cdot)^{1-\alpha} \leq \Gr{k_2} (\cdot).
$
  Thus 
  \begin{align*}
   |f(\tau)| &= \left(\frac{|f(\tau)|}{\Gr{k_1}(|\tau|)} \right)^\alpha \left(\frac{|f(\tau)|}{\Gr{k_3} (|\tau|)} \right)^{1-\alpha} \Gr{k_1}(|\tau|)^\alpha \cdot \Gr{k_3}(|\tau|)^{1-\alpha} \\
            &\leq \left(\frac{|f(\tau)|}{\Gr{k_1}(|\tau|)} \right)^\alpha \left(\frac{|f(\tau)|}{\Gr{k_3}(|\tau|)} \right)^{1-\alpha}  \Gr{k_2}(|\tau|)   \leq \delta^{1-\alpha} \Gr{k_2} (|\tau|).
  \end{align*}
  This is independent of $\tau$, whence the property holds for all $\delta>0$ with $\delta^{1-\alpha} < \varepsilon$. 
 \end{proof}


 \begin{lem}[$\ellInftyLimit{\Ind}{B}$ is sometimes a Silva space]											\label{lem_silva}
  Let $\left(\Gr{k}\right)_{k\in\N}$ be a (possibly not convex) growth family and assume that $\IndwAbs$ is a graded index set of finite type. 
  Assume further that $\text{dim }B<\infty$.
  Then the \textup{(LB)}-space $\ellInftyLimit{\Ind}{B}$ is a \emph{Silva space} and \emph{compactly regular}, i.e.~a locally convex direct limit with compact operators as bonding maps such that every compact subset of $\ellInftyLimit{\Ind}{B}$ is a compact subset of some step.
 \end{lem}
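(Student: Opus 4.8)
The plan is to produce, using property \ref{axiom_H_Infinity}, a cofinal subsequence $k_1 < k_2 < \cdots$ of $\N$ along which every linking map $\ellInfty{k_m}{\Ind}{B} \hookrightarrow \ellInfty{k_{m+1}}{\Ind}{B}$ is a compact operator. Since a locally convex direct limit does not change when one passes to a cofinal subspectrum, this exhibits $\ellInftyLimit{\Ind}{B}$ as a direct limit of Banach spaces with compact bonding maps, i.e.\ as a Silva space; the asserted compact regularity then drops out as a standard property of such spaces.

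The core of the argument is to show that a single inclusion becomes compact as soon as the weights are separated by a factor of $2^n$. First I would recall from \ref{setup_controlled_maps}(a) that $\Phi_k \colon \ellInfty{k}{\Ind}{B} \to \ellInfty{}{\Ind}{B}$, $f \mapsto f/\Gr{k}(\abs{\cdot})$, is an isometric isomorphism onto the unweighted Banach space $\ellInfty{}{\Ind}{B}$. Conjugating the inclusion $\ellInfty{k}{\Ind}{B} \hookrightarrow \ellInfty{k'}{\Ind}{B}$ (with $k' \geq k$) by $\Phi_k$ and $\Phi_{k'}$ turns it into the diagonal multiplication operator
\[
 D \colon \ellInfty{}{\Ind}{B} \to \ellInfty{}{\Ind}{B}, \qquad g \longmapsto \Bigl(\tau \mapsto w_\tau\, g(\tau)\Bigr), \qquad w_\tau \coloneq \frac{\Gr{k}(\abs{\tau})}{\Gr{k'}(\abs{\tau})},
\]
so the inclusion is compact if and only if $D$ is.

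Given $k$, I would then use \ref{axiom_H_Infinity} to pick $k' \geq k$ with $\Gr{k'}(n) \geq 2^n \Gr{k}(n)$ for all $n$, so that $w_\tau \leq 2^{-\abs{\tau}}$. Because $\IndwAbs$ is of finite type, each set $F_N \coloneq \setm{\tau \in \Ind}{\abs{\tau} \leq N} = \bigcup_{n\leq N}\Ind_n$ is finite, and since $B$ is finite-dimensional the truncations $D_N$ (which multiply by $w_\tau$ on $F_N$ and vanish off $F_N$) have finite rank; as $\opnorm{D - D_N} = \sup_{\tau\notin F_N} w_\tau \leq 2^{-(N+1)}$, the operator $D$ is a norm limit of finite-rank operators, hence compact, and therefore so is the inclusion. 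Iterating \ref{axiom_H_Infinity} yields the required cofinal sequence $k_1 < k_2 < \cdots$ with all consecutive inclusions compact, and the Silva-space assertion follows.

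Finally, for compact regularity I would invoke that every Silva space is a regular (LB)-space in which bounded subsets are contained and bounded in a single step (see e.g.\ \cite{MR1977923}): a compact $K \subseteq \ellInftyLimit{\Ind}{B}$ is bounded, hence bounded in some $\ellInfty{k_m}{\Ind}{B}$, hence relatively compact in $\ellInfty{k_{m+1}}{\Ind}{B}$ under the compact linking map, and---being closed for the coarser limit topology---it is closed, thus compact, already in $\ellInfty{k_{m+1}}{\Ind}{B}$. The one delicate point is that we are working with $\ell^\infty$-type rather than $c_0$-type spaces: a diagonal operator with null weights need not be compact on an arbitrary weighted $\ell^\infty$-space, and it is precisely the finite-type hypothesis on $\IndwAbs$ together with finite-dimensionality of $B$ that makes the $D_N$ genuinely finite-rank, which is what makes the finite-rank approximation work.
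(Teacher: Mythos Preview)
Your proof is correct and follows essentially the same route as the paper: pass to a cofinal subsequence via \ref{axiom_H_Infinity}, conjugate the bonding maps to diagonal multiplication operators on $\ellInfty{}{\Ind}{B}$ with weights $\leq 2^{-\abs{\tau}}$, and use finite type of $\Ind$ together with finite-dimensionality of $B$ to see these are compact. The only cosmetic difference is that you spell out the finite-rank approximation $D_N \to D$ explicitly (and likewise the compact-regularity deduction), whereas the paper invokes the standard $c_0$-criterion for compactness of multiplication operators on $\ell^\infty$ and a reference for ``Silva $\Rightarrow$ compactly regular''.
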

 \begin{proof}
  Every Silva space is compactly regular (see e.g.~\cite[Proposition 4.4]{MR2743766}). Thus it suffices to show that $\ellInftyLimit{\Ind}{B}$ is a Silva space.
  To this end, we construct iteratively indices which satisfy \ref{axiom_H_Infinity}.
  This yields a sequence $k_1<k_2<k_3<k_4<\cdots$ such that
  \[
   \frac{ \Gr{k_j}(n)}{\Gr{k_{j+1}}(n)}\leq \frac{1}{2^n}		\quad \hbox{ for all } n\in\N
  \]
  Since passing to a cofinal subsequence $\left(\ellInfty{k_1}{\Ind}{B} \to \ellInfty{k_2}{\Ind}{B} \to \cdots \right)$ does not change the limit, we may assume without loss of generality that $k_j = j$ and thus that
  \begin{equation}													\label{eq silva}
   \frac{ \Gr{k}(n)}{\Gr{k+1}(n)}\leq \frac{1}{2^n}			\quad \hbox{ for all } n\in\N			
  \end{equation}
  Using the isometric isomorphisms 
  $\ellInfty{k}{\Ind}{B} \rightarrow \ellInfty{}{\Ind}{B}, f\mapsto \frac{f}{\Gr{k}(\abs{\cdot})}$, we see that the bonding map $I_{k}^{k+1}$ is a compact operator if and only if the multiplication operator
 $
   M_h \colon \ellInfty{}{\Ind}{B} \rightarrow \ellInfty{}{\Ind}{B}, f \mapsto h\cdot f,
  $
 where $\func{h}{\Ind}{\R}{\tau}{\frac{ \Gr{k}(\abs{\tau})}{\Gr{k+1}(\abs{\tau})}}$, is compact.

 Since $B\cong\K^d$, we identify $\ellInfty{}{\Ind}{\K^d}\cong \left( \ellInfty{}{\Ind}{\K}	\right)^d,$ so without loss of generality, $B=\K$.
 In this case, the multiplication operator 
 is compact if and only if $h$ 
 has countable support and $\lim_{j\to\infty} \Gr{k}/\Gr{k+1}(|\tau_j|) = 0$ for some enumeration $(\tau_j)_j$ of the support of $h$ (cf.\ \cite[p.\ 147 Exercise 4.]{MR1483073}). 
 Since $\IndwAbs$ is of finite type, this means that $\abs{\tau_j} \rightarrow \infty$, whence $\Gr{k}/\Gr{k+1}(|\tau_j|) \leq 2^{-|\tau_j|} \rightarrow 0$ by \eqref{eq silva}. Thus the bonding maps are compact and $\ellInftyLimit{\Ind}{B}$ is a Silva space.
 \end{proof} 
 
 \phantomsection
\addcontentsline{toc}{section}{References}
\bibliographystyle{new}
\bibliography{controlled_lit} 

\newcommand{\etalchar}[1]{$^{#1}$}
\def\polhk#1{\setbox0=\hbox{#1}{\ooalign{\hidewidth
  \lower1.5ex\hbox{`}\hidewidth\crcr\unhbox0}}}
  \def\polhk#1{\setbox0=\hbox{#1}{\ooalign{\hidewidth
  \lower1.5ex\hbox{`}\hidewidth\crcr\unhbox0}}}
\begin{thebibliography}{CEMM18}
\providecommand{\url}[1]{\texttt{#1}}
\providecommand{\urlprefix}{URL }
\expandafter\ifx\csname urlstyle\endcsname\relax
  \providecommand{\doi}[1]{doi:\discretionary{}{}{}#1}\else
  \providecommand{\doi}{doi:\discretionary{}{}{}\begingroup
  \urlstyle{rm}\Url}\fi
\providecommand{\eprint}[2][]{\url{#2}}

\bibitem[AL13]{MR3091063}
Aguiar, M. and Lauve, A.
\newblock \emph{Antipode and convolution powers of the identity in graded
  connected {H}opf algebras}.
\newblock In \emph{25th {I}nternational {C}onference on {F}ormal {P}ower
  {S}eries and {A}lgebraic {C}ombinatorics ({FPSAC} 2013)}, Discrete Math.
  Theor. Comput. Sci. Proc., AS, pp. 1053--1064 (2013)

\bibitem[AM13]{MR3077234}
Aguiar, M. and Mahajan, S.
\newblock \emph{Hopf monoids in the category of species}.
\newblock In \emph{Hopf algebras and tensor categories}, \emph{Contemp. Math.},
  vol. 585, pp. 17--124 (AMS, 2013)

\bibitem[Bas64]{MR0177277}
Bastiani, A.
\newblock \emph{Applications diff\'erentiables et vari\'et\'es
  diff\'erentiables de dimension infinie}.
\newblock J. Analyse Math. \textbf{13} (1964):1--114

\bibitem[BCFP17]{1701.01152v1}
Bruned, Y., Chevyrev, I., Friz, P.~K. and Preiss, R.
\newblock \emph{{A Rough Path Perspective on Renormalization}} 2017.
\newblock \eprint{1701.01152v1}

\bibitem[BDS16]{BDS16}
Bogfjellmo, G., Dahmen, R. and Schmeding, A.
\newblock \emph{{Character groups of Hopf algebras as infinite-dimensional Lie
  groups}}.
\newblock Ann.\ Inst.\ Fourier (Grenoble) \textbf{66} (2016)(5):2101--2155

\bibitem[BDS17]{BDSOverview}
Bogfjellmo, G., Dahmen, R. and Schmeding, A.
\newblock \emph{{Overview of \mbox{(pro-)}Lie group structures on Hopf algebra
  character groups}}.
\newblock In M.~Barbero, K.~{Ebrahimi-Fard} and D.~M. de~Diego (Eds.),
  \emph{Discrete Mechanics, Geometric Integration and Lie-Butcher Series},
  Springer Proceedings in Mathematics \& Statistics (2017).
\newblock In print

\bibitem[BFK06]{MR2200854}
Brouder, C., Frabetti, A. and Krattenthaler, C.
\newblock \emph{Non-commutative {H}opf algebra of formal diffeomorphisms}.
\newblock Adv. Math. \textbf{200} (2006)(2):479--524

\bibitem[Boc70]{MR0273396}
Bochnak, J.
\newblock \emph{Analytic functions in {B}anach spaces}.
\newblock Studia Math. \textbf{35} (1970):273--292

\bibitem[BS71a]{BS71b}
Bochnak, J. and Siciak, J.
\newblock \emph{Analytic functions in topological vector spaces}.
\newblock Studia Math. \textbf{39} (1971):77--112

\bibitem[BS71b]{BS71a}
Bochnak, J. and Siciak, J.
\newblock \emph{Polynomials and multilinear mappings in topological vector
  spaces}.
\newblock Studia Math. \textbf{39} (1971):59--76

\bibitem[BS16]{BS16}
Bogfjellmo, G. and Schmeding, A.
\newblock \emph{{The tame Butcher group}}.
\newblock J. Lie Theory \textbf{26} (2016)(4):1107--1144

\bibitem[BS17]{BS14}
Bogfjellmo, G. and Schmeding, A.
\newblock \emph{{The Lie Group Structure of the Butcher Group}}.
\newblock Found. Comput. Math. \textbf{17} (2017)(1):127--159

\bibitem[BT10]{MR2792580}
Berend, D. and Tassa, T.
\newblock \emph{Improved bounds on {B}ell numbers and on moments of sums of
  random variables}.
\newblock Probab. Math. Statist. \textbf{30} (2010)(2):185--205

\bibitem[Car07]{MR2290769}
Cartier, P.
\newblock \emph{A primer of {H}opf algebras}.
\newblock In \emph{Frontiers in number theory, physics, and geometry. {II}},
  pp. 537--615 (Springer, Berlin, 2007)

\bibitem[CEMM18]{CEMM18}
Curry, C., {Ebrahimi-Fard}, K., Manchon, D. and {Munthe-Kaas}, H.~Z.
\newblock \emph{{Planarly branched rough paths and rough differential equations
  on homogeneous spaces}} 2018.
\newblock \eprint{arXiv:1804.08515v3}

\bibitem[CHV10]{CHV2010}
Chartier, P., Hairer, E. and Vilmart, G.
\newblock \emph{{Algebraic structures of B-series}}.
\newblock Found. Comput. Math. \textbf{10} (2010)(4):407--427

\bibitem[CM08]{MR2371808}
Connes, A. and Marcolli, M.
\newblock \emph{Noncommutative geometry, quantum fields and motives},
  \emph{American Mathematical Society Colloquium Publications}, vol.~55
  (American Mathematical Society, Providence, RI; Hindustan Book Agency, New
  Delhi, 2008)

\bibitem[CMS15]{MR3320934}
Chartier, P., Murua, A. and {Sanz-Serna}, J.~M.
\newblock \emph{Higher-order averaging, formal series and numerical integration
  {III}: error bounds}.
\newblock Found. Comput. Math. \textbf{15} (2015)(2):591--612

\bibitem[Com74]{MR0460128}
Comtet, L.
\newblock \emph{Advanced combinatorics} (D. Reidel Publishing Co., Dordrecht,
  1974), enlarged edn.

\bibitem[Dah10]{dahmen2010}
Dahmen, R.
\newblock \emph{Analytic mappings between {LB}-spaces and applications in
  infinite-dimensional {L}ie theory}.
\newblock Math. Z. \textbf{266} (2010)(1):115--140

\bibitem[Dah11]{dahmen2011}
Dahmen, R.
\newblock \emph{{D}irect limit constructions in infinite dimensional {L}ie
  theory}.
\newblock Ph.D. thesis, {U}niversity of {P}aderborn 2011.
\newblock Urn:nbn:de:hbz:466:2-239

\bibitem[DEG16]{DuffautEspinosa2016609}
{Duffaut Espinosa}, L.~A., {Ebrahimi-Fard}, K. and Gray, W.~S.
\newblock \emph{{A combinatorial Hopf algebra for nonlinear output feedback
  control systems}}.
\newblock Journal of Algebra \textbf{453} (2016):609–643

\bibitem[DFH{\etalchar{+}}14]{MR3223292}
Duchamp, G. H.~E., Foissy, L., {Hoang-Nghia}, N., Manchon, D. and Tanasa, A.
\newblock \emph{A combinatorial non-commutative {H}opf algebra of graphs}.
\newblock Discrete Math. Theor. Comput. Sci. \textbf{16} (2014)(1):355--370

\bibitem[ELM14]{MR3254718}
{Ebrahimi-Fard}, K., Lundervold, A. and Manchon, D.
\newblock \emph{Noncommutative {B}ell polynomials, quasideterminants and
  incidence {H}opf algebras}.
\newblock Internat. J. Algebra Comput. \textbf{24} (2014)(5):671--705

\bibitem[FM15]{MR3381489}
Frabetti, A. and Manchon, D.
\newblock \emph{Five interpretations of {F}a\`a di {B}runo's formula}.
\newblock In \emph{Fa\`a di {B}runo {H}opf algebras, {D}yson-{S}chwinger
  equations, and {L}ie-{B}utcher series}, \emph{IRMA Lect. Math. Theor. Phys.},
  vol.~21, pp. 91--147 (Eur. Math. Soc., Z\"urich, 2015)

\bibitem[FW68]{MR0226355}
Floret, K. and Wloka, J.
\newblock \emph{Einf\"uhrung in die {T}heorie der lokalkonvexen {R}\"aume}.
\newblock Lecture Notes in Mathematics, No. 56 (Springer-Verlag, Berlin-New
  York, 1968)

\bibitem[GD11]{MR2849486}
Gray, W.~S. and {Duffaut Espinosa}, L.~A.
\newblock \emph{A {F}a\`a di {B}runo {H}opf algebra for a group of {F}liess
  operators with applications to feedback}.
\newblock Systems Control Lett. \textbf{60} (2011)(7):441--449

\bibitem[GDE14]{MR3278760}
Gray, W.~S., {Duffaut Espinosa}, L.~A. and {Ebrahimi-Fard}, K.
\newblock \emph{Fa\`a di {B}runo {H}opf algebra of the output feedback group
  for multivariable {F}liess operators}.
\newblock Systems Control Lett. \textbf{74} (2014):64--73

\bibitem[GE17]{MR3628306}
Gray, W.~S. and {Ebrahimi-Fard}, K.
\newblock \emph{S{ISO} output affine feedback transformation group and its
  {F}a\`a di {B}runo {H}opf algebra}.
\newblock SIAM J. Control Optim. \textbf{55} (2017)(2):885--912

\bibitem[Gl{\"o}02a]{hg2002a}
Gl{\"o}ckner, H.
\newblock \emph{Infinite-dimensional {L}ie groups without completeness
  restrictions}.
\newblock In \emph{Geometry and analysis on finite- and infinite-dimensional
  {L}ie groups ({B}\polhk edlewo, 2000)}, \emph{Banach Center Publ.}, vol.~55,
  pp. 43--59 (Polish Acad. Sci., Warsaw, 2002)

\bibitem[Gl{\"o}02b]{MR1911979}
Gl{\"o}ckner, H.
\newblock \emph{Infinite-dimensional {L}ie groups without completeness
  restrictions}.
\newblock In \emph{Geometry and analysis on finite- and infinite-dimensional
  {L}ie groups ({B}\polhk edlewo, 2000)}, \emph{Banach Center Publ.}, vol.~55,
  pp. 43--59 (Polish Acad. Sci., Warsaw, 2002)

\bibitem[Gl{\"o}11]{MR2743766}
Gl{\"o}ckner, H.
\newblock \emph{Direct limits of infinite-dimensional {L}ie groups}.
\newblock In \emph{Developments and trends in infinite-dimensional {L}ie
  theory}, \emph{Progr. Math.}, vol. 288, pp. 243--280 (Birkh\"auser Boston,
  Inc., Boston, MA, 2011)

\bibitem[Gl{\"o}15]{HG15reg}
Gl{\"o}ckner, H.
\newblock \emph{{Regularity properties of infinite-dimensional Lie groups, and
  semiregularity}} 2015.
\newblock \eprint{arXiv:1208.0715v5}

\bibitem[{Gro}04]{MR2040581}
{Grosse-Erdmann}, K.-G.
\newblock \emph{A weak criterion for vector-valued holomorphy}.
\newblock Math. Proc. Cambridge Philos. Soc. \textbf{136} (2004)(2):399--411

\bibitem[Han18]{Hanusch18}
Hanusch, M.
\newblock \emph{{The Strong Trotter Property for Locally $\mu$-convex Lie
  Groups}} 2018.
\newblock \eprint{arXiv:1802.08923}

\bibitem[HK15]{MR3300969}
Hairer, M. and Kelly, D.
\newblock \emph{Geometric versus non-geometric rough paths}.
\newblock Ann. Inst. Henri Poincar\'e Probab. Stat. \textbf{51}
  (2015)(1):207--251

\bibitem[HL97]{HL1997}
Hairer, E. and Lubich, C.
\newblock \emph{The life-span of backward error analysis for numerical
  integrators}.
\newblock Numer. Math. \textbf{76} (1997)(4):441--462

\bibitem[HLW06]{MR2221614}
Hairer, E., Lubich, C. and Wanner, G.
\newblock \emph{Geometric numerical integration}, \emph{Springer Series in
  Computational Mathematics}, vol.~31 (Springer-Verlag, Berlin, 2006), second
  edn.

\bibitem[Hof00]{MR1747062}
Hoffman, M.~E.
\newblock \emph{Quasi-shuffle products}.
\newblock J. Algebraic Combin. \textbf{11} (2000)(1):49--68

\bibitem[HSTH01]{MR1878717}
Hirai, T., Shimomura, H., Tatsuuma, N. and Hirai, E.
\newblock \emph{Inductive limits of topologies, their direct products, and
  problems related to algebraic structures}.
\newblock J. Math. Kyoto Univ. \textbf{41} (2001)(3):475--505

\bibitem[Kel74]{keller1974}
Keller, H.
\newblock \emph{{Differential Calculus in Locally Convex Spaces}}.
\newblock Lecture Notes in Mathematics 417 (Springer Verlag, Berlin, 1974)

\bibitem[KM97]{KM97}
Kriegl, A. and Michor, P.~W.
\newblock \emph{The convenient setting of global analysis}, \emph{Mathematical
  Surveys and Monographs}, vol.~53 (AMS, 1997)

\bibitem[Lee13]{MR2954043}
Lee, J.~M.
\newblock \emph{Introduction to smooth manifolds}, \emph{Graduate Texts in
  Mathematics}, vol. 218 (Springer, New York, 2013), second edn.

\bibitem[LM11]{MR2790315}
Lundervold, A. and {Munthe-Kaas}, H.
\newblock \emph{Hopf algebras of formal diffeomorphisms and numerical
  integration on manifolds}.
\newblock In \emph{Combinatorics and physics}, \emph{Contemp. Math.}, vol. 539,
  pp. 295--324 (AMS, 2011)

\bibitem[LR10]{MR2732058}
Loday, J.-L. and Ronco, M.
\newblock \emph{Combinatorial {H}opf algebras}.
\newblock In \emph{Quanta of maths}, \emph{Clay Math. Proc.}, vol.~11, pp.
  347--383 (AMS, 2010)

\bibitem[Man08]{MR2523455}
Manchon, D.
\newblock \emph{Hopf algebras in renormalisation}.
\newblock In \emph{Handbook of algebra. {V}ol. 5}, \emph{Handb. Algebr.},
  vol.~5, pp. 365--427 (Elsevier, Amsterdam, 2008)

\bibitem[Mil83]{milnor1983}
Milnor, J.
\newblock \emph{{Remarks on infinite-dimensional Lie groups}}.
\newblock In B.~DeWitt and R.~Stora (Eds.), \emph{Relativity, Groups and
  Topology II}, pp. 1007--1057 (North Holland, New York, 1983)

\bibitem[MP18]{MP15}
Menous, F. and Patras, F.
\newblock \emph{Right-handed {H}opf algebras and the pre{L}ie forest formula}.
\newblock Ann. Inst. Henri Poincar\'{e} D \textbf{5} (2018)(1):103--125

\bibitem[MS16]{MR3485151}
Murua, A. and {Sanz-Serna}, J.~M.
\newblock \emph{Computing normal forms and formal invariants of dynamical
  systems by means of word series}.
\newblock Nonlinear Anal. \textbf{138} (2016):326--345

\bibitem[MSS17]{MR3648103}
Murua, A. and Sanz-Serna, J.~M.
\newblock \emph{Word series for dynamical systems and their numerical
  integrators}.
\newblock Found. Comput. Math. \textbf{17} (2017)(3):675--712

\bibitem[MST99]{MR1688213}
Mu{\~n}oz, G.~A., Sarantopoulos, Y. and Tonge, A.
\newblock \emph{Complexifications of real {B}anach spaces, polynomials and
  multilinear maps}.
\newblock Studia Math. \textbf{134} (1999)(1):1--33

\bibitem[Mur99]{MR1666537}
Murua, A.
\newblock \emph{Formal series and numerical integrators. {I}. {S}ystems of
  {ODE}s and symplectic integrators}.
\newblock Appl. Numer. Math. \textbf{29} (1999)(2):221--251

\bibitem[MV97]{MR1483073}
Meise, R. and Vogt, D.
\newblock \emph{Introduction to functional analysis}, \emph{Oxford Graduate
  Texts in Mathematics}, vol.~2 (Oxford University Press, New York, 1997)

\bibitem[MW08]{MR2407032}
{Munthe-Kaas}, H.~Z. and Wright, W.~M.
\newblock \emph{On the {H}opf algebraic structure of {L}ie group integrators}.
\newblock Found. Comput. Math. \textbf{8} (2008)(2):227--257

\bibitem[Nee06]{neeb2006}
Neeb, K.
\newblock \emph{{Towards a Lie theory of locally convex groups}}.
\newblock Japanese Journal of Mathematics \textbf{1} (2006)(2):291--468

\bibitem[Obe85]{Oberst}
Oberst, U.
\newblock \emph{Actions of formal groups on formal schemes. {A}pplications to
  control theory and combinatorics}.
\newblock In \emph{S\'eminaire d'alg\`ebre {P}aul {D}ubreil et {M}arie-{P}aule
  {M}alliavin, 36\`eme ann\'ee ({P}aris, 1983--1984)}, \emph{Lecture Notes in
  Math.}, vol. 1146, pp. 214--269 (Springer, Berlin, 1985)

\bibitem[Reu93]{MR1231799}
Reutenauer, C.
\newblock \emph{Free {L}ie algebras}, \emph{London Mathematical Society
  Monographs. New Series}, vol.~7 (The Clarendon Press, Oxford University
  Press, New York, 1993).
\newblock Oxford Science Publications

\bibitem[Tes12]{MR2961944}
Teschl, G.
\newblock \emph{Ordinary differential equations and dynamical systems},
  \emph{Graduate Studies in Mathematics}, vol. 140 (AMS, 2012)

\bibitem[Wat79]{MR547117}
Waterhouse, W.~C.
\newblock \emph{Introduction to affine group schemes}, \emph{Graduate Texts in
  Mathematics}, vol.~66 (Springer-Verlag, New York-Berlin, 1979)

\bibitem[Wen03]{MR1977923}
Wengenroth, J.
\newblock \emph{Derived functors in functional analysis}, \emph{Lecture Notes
  in Mathematics}, vol. 1810 (Springer-Verlag, Berlin, 2003)

\end{thebibliography}
\end{document}